\newtheorem{theorem}{Theorem}[section]
\newtheorem{definition}[theorem]{Definition}
\newtheorem{prop}[theorem]{Proposition}
\newtheorem{lemma}[theorem]{Lemma}
\newtheorem{cor}[theorem]{Corollary}
\newtheorem{remark}[theorem]{Remark}
\numberwithin{equation}{section}
\newtheorem{thmx}{Theorem}
\title{Hyperbolicity of renormalization for dissipative gap mappings}
\date{\today}
\subjclass[2010]{Primary 37E05; Secondary 37E20, 37E10}
\keywords{Gap mappings, hyperbolicity of renormalization, Lorenz mappings, Lorenz and Cherry flows.}
\author{Trevor Clark} 
\address{Trevor Clark, Imperial College London, London, UK}
\email[]{t.clark@imperial.ac.uk}
\author{M\'arcio Gouveia} 
\address{M\'arcio Gouveia,
IBILCE-UNESP, CEP 15054-000, S. J. Rio Preto,
S\~{a}o Paulo, Brazil}
\email[]{mra.gouveia@unesp.br}
\thanks{This work has been partially supported by 
ERC AdG grant no 339523 RGDD; 
EU Marie-Curie IRSES Brazilian-European partnership in Dynamical Systems (FP7-PEOPLE-2012-IRSES 318999 BREUDS); FAPESP grants 2017/25955-4 and 2013/24541-0 and CAPES}
\begin{document}

\maketitle

\begin{abstract} 
A gap mapping is a discontinuous interval mapping with two strictly increasing branches that have a gap between their ranges. They are one-dimensional dynamical systems, which arise in the study of certain higher dimensional flows, for example the Lorenz flow and the Cherry flow.
In this paper we prove hyperbolicity of renormalization acting on $C^3$ dissipative gap mappings, and show that the topological conjugacy classes of infinitely renormalizable gap mappings are $C^1$ manifolds.
\end{abstract}

\section{Introduction}
Higher dimensional, physically relevant, dynamical systems often possess features that can be studied using techniques from one-dimensional dynamical systems. Indeed, often a one-dimensional discrete dynamical system captures essential features of a higher dimensional flow. For example, for the Lorenz flow, whose dynamics were first studied in \cite{L}, one may study the return mapping to a plane transverse to its stable manifold, the stable manifold intersects the plane in a curve, and the return mapping to this curve is a (discontinuous) one-dimensional dynamical system known as a Lorenz mapping, see \cite{Tucker}.
This approach has been very fruitful in the study of the Lorenz flow.
It would be difficult to cite all the papers studying this famous dynamical system, but for example see \cite{ABS, Wi, GuWi, ACT,GPTT, Rov}.
The success of the use of the one-dimensional Lorenz mapping in studying the flow has led to an extensive study of these interval mappings, see
\cite{StP, KP, MdM, LM, W,GW,MarcoWinclker, MW2, Bran} among many others.
Great progress in understanding the Cherry flow on a two-torus has followed from a similar approach
\cite{Cherry, MvSdMM, Mendes,AZM, dM, NZ, SV,P4,P3,P2,P1}.

In this paper we study a class of Lorenz mappings, which have ``gaps" in their ranges. These mappings arise as return mappings for the Lorenz flow and
for certain Cherry flows. They are also among the first examples of mappings with a wandering interval - the gap. This phenomenon is ruled out for $\mathcal C^{1+\mathrm{Zygmund}}$ mappings with a non-flat critical point by \cite{vSV}. In fact, in \cite{BM} it is proved that Lorenz mappings satisfying a certain bounded non-linearity condition have a wandering interval if and only if they have a renormalization which is a gap mapping. See the introduction of \cite{AlvesColli2} for detailed history of gap mappings.

The main result of this paper concerns the structure of the topological conjugacy classes of
$\mathcal C^4$ dissipative gap mappings. Roughly, these are discontinuous mappings with two orientation preserving branches, whose derivatives are bounded between zero and one. They are defined in Definition~\ref{def:dis gap map}. 

\begin{thmx}
\label{thm:A}
The topological conjugacy class of an infinitely renormalizable $\mathcal C^4$ dissipative gap mapping is a $\mathcal C^1$-manifold of codimension-one in the space of dissipative gap maps.
\end{thmx}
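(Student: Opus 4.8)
The plan is to deduce Theorem~\ref{thm:A} from hyperbolicity of the renormalization operator $R$ (the other main result announced in the abstract) together with the a priori (``real'') bounds. Write $\mathcal D$ for the Banach manifold of $\mathcal C^4$ dissipative gap maps, $R$ for the renormalization operator (affinely rescaled first-return map to a suitable interval around the discontinuity), and $\chi$ for the map sending an infinitely renormalizable $f$ to its combinatorics $\underline\omega=(\omega_0,\omega_1,\dots)$ in the relevant symbolic space $\Omega$, so that $\chi\circ R=\sigma\circ\chi$. The first step is to replace topological conjugacy by combinatorial equivalence: a topological conjugacy carries the gap of $f$ (dynamically characterised as the complement of the range) to the gap of $g$ and renormalization intervals to renormalization intervals, hence conjugate infinitely renormalizable maps have equal combinatorics; conversely, given two infinitely renormalizable dissipative gap maps with the same combinatorics one builds a conjugacy on the (dynamically defined, minimal) Cantor attractor using the order-isomorphism of their combinatorial labellings, extends it monotonically over the complementary gaps, and checks continuity using the real bounds to see that the complementary intervals shrink. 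Thus the topological conjugacy class of $f$ equals $\chi^{-1}(\chi(f))$, and it suffices to show each fibre $\chi^{-1}(\underline\omega)$ is a $\mathcal C^1$ submanifold of codimension one.

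The second step is to set up the hyperbolic picture on a compact invariant set. From the real bounds, the renormalizations $R^nf$ of any infinitely renormalizable $f$ stay in a compact subset of $\mathcal D$ and accumulate on a compact $R$-invariant \emph{renormalization attractor} $\mathcal A$; moreover two infinitely renormalizable maps with the same combinatorics satisfy $d_{\mathcal C^3}(R^nf,R^ng)\to 0$ exponentially fast --- this is where dissipativity is used, the derivatives lying in $(0,1)$ forcing (together with bounded distortion) genuine exponential contraction of $R$ along a combinatorial class, while the residual one-parameter freedom is pinned down by the combinatorics via monotonicity. The hyperbolicity theorem then supplies a continuous $DR$-invariant splitting $T_g\mathcal D=E^u_g\oplus E^s_g$ over $g\in\mathcal A$ with $\dim E^u_g=1$, uniform expansion on $E^u$ and uniform contraction on $E^s$. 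Throughout one must track the usual loss of derivatives ($R$ sends $\mathcal C^4$ into $\mathcal C^3$ and is $\mathcal C^1$ in that sense), handled by the now-standard device of working with operators that lose one derivative; the manifold produced is $\mathcal C^1$ in the $\mathcal C^4$ topology on $\mathcal D$.

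The third step is the invariant-manifold construction and gluing. Applying the Hadamard--Perron / graph-transform theorem along the (generally non-periodic) orbit segments in $\mathcal A$ gives, for each $g\in\mathcal A$, a local stable manifold $W^s_{\mathrm{loc}}(g)$: an embedded $\mathcal C^1$ codimension-one disk tangent to $E^s_g$, of size bounded below uniformly on $\mathcal A$, varying continuously with $g$, and characterised as the set of nearby maps $h$ with $d_{\mathcal C^3}(R^nh,R^ng)\to 0$; since this disk is small and $\chi$ is continuous into the totally disconnected space $\Omega$, every map in $W^s_{\mathrm{loc}}(g)$ shares the combinatorics of $g$. Now fix $\underline\omega$ and a reference $f_0$ with $\chi(f_0)=\underline\omega$. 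For any $g$ with $\chi(g)=\underline\omega$, the exponential shadowing of step two yields $n=n(g)$ with $R^ng\in W^s_{\mathrm{loc}}(R^nf_0)$, i.e.\ $g\in R^{-n}\bigl(W^s_{\mathrm{loc}}(R^nf_0)\bigr)$. In coordinates adapted to the splitting $R$ is expanding in the one-dimensional $E^u$-direction and contracting along $E^s$, so the pullback under $R$ of the graph $W^s_{\mathrm{loc}}(R^nf_0)=\{x=\phi(y)\}$ is again, by the implicit function theorem applied in the expanding direction, a graph over $E^s$; iterating along the shadowing orbit segment $g,Rg,\dots,R^{n-1}g$ exhibits a codimension-one $\mathcal C^1$ piece near $g$. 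Intersecting with the open set where the first $n$ renormalization types are prescribed, this piece coincides near $g$ with $\chi^{-1}(\underline\omega)$ itself (forward inclusion by shadowing, reverse inclusion because $W^s_{\mathrm{loc}}$ lies in one combinatorial class), so the pieces glue to present $\chi^{-1}(\underline\omega)$ as a $\mathcal C^1$ submanifold of codimension $\dim E^u=1$.

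The main obstacle is the hyperbolicity input itself: establishing the uniform expansion in the unstable direction and, in the infinite-dimensional setting, the uniform contraction on $E^s$ with enough control --- continuity of the splitting, $\mathcal C^1$ dependence, compactness of $\mathcal A$ --- to run the stable-manifold theorem with uniformly sized domains. A secondary difficulty is the exponential $\mathcal C^3$-convergence of renormalizations with equal combinatorics, which is what lets one identify a whole conjugacy class with a single global stable set rather than merely a germ; and, as always, the careful bookkeeping of the one-derivative loss at every stage.
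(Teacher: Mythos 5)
Your plan hinges on the existence of a compact $R$-invariant renormalization attractor $\mathcal A$ on which one can run the Hadamard--Perron stable manifold theorem, and on ``real bounds'' placing the renormalizations in a compact part of $\mathcal D$. Neither is available here, and the paper goes out of its way to say so: for dissipative gap mappings the geometry is \emph{unbounded} --- the ratio of successive renormalization interval lengths tends to infinity --- and consequently ``the renormalization operator for gap mappings does not seem to possess a natural extension to the limits of renormalization.'' Concretely, $R^n f$ converges to the family of affine maps while the parameters degenerate (in the paper's proof, e.g., $b\to 0$ and the slopes $\alpha,\beta\to 0$ at deep levels), so the orbit of $f$ escapes to the boundary of the parameter space rather than accumulating on a compact invariant set inside $\mathcal D$. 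This is precisely why the paper cannot imitate the compact-attractor route of Sullivan/McMullen/Lyubich or of de Faria--de Melo--Pinto (which you cite implicitly by appealing to shadowing and local stable manifolds), and why the hyperbolicity in Theorem~B is stated only \emph{asymptotically along each orbit} (``for all $n$ sufficiently big, $T_{\underline{\mathcal{R}}^n\underline f}\underline{\mathcal D}=E^u\oplus E^s\ldots$''), not as a uniform splitting over an invariant set.

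Because of this, the paper takes a genuinely different route. It works in Martens' decomposition space $\underline{\mathcal D}$ (to sidestep the non-differentiability of composition in $\mathcal C^k$, rather than merely ``track the one-derivative loss''), computes the block matrix $D\underline{\mathcal R}_{\underline f}$ by hand (Lemmas~\ref{lemma partial derivatives of abb tilde respec to abb}--\ref{lemma entries of D matrix}), reads off the dominant entry $\partial\tilde b/\partial b\asymp 1/|I'|$, and then constructs the conjugacy class directly by two graph-transform fixed-point arguments: one for a Lipschitz graph $w^*$ inside an absorbing (but non-compact) region $B_0$ using the cone field $C_{r,\delta}$ (Lemmas~\ref{lem:cone1}, \ref{lem:cone2}, \ref{Technical Lemma}), and a second graph transform $Q$ on admissible codimension-one plane fields to upgrade $w^*$ to a $\mathcal C^1$ manifold (Proposition~\ref{prop:deep manifold}). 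Finally it pulls $w^*$ back to the whole space via the transversal one-parameter family of \cite{AlvesColli2} and Lemma~\ref{lem:trans}, following the scheme of \cite[Theorem 9.1]{dFdMP} only at this last step. Your outline would be sound in a bounded-geometry setting, but here the central step (step two, from ``real bounds'' to a compact $\mathcal A$ with a continuous hyperbolic splitting) fails, and a proof must instead exploit the orbitwise, non-uniform hyperbolicity with a bespoke graph-transform argument as the paper does.
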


To obtain this result, we prove the hyperbolicity of renormalization for dissipative gap mappings. 
In the usual approach to renormalization, one considers renormalization as a restriction of a high iterate of a mapping. While this is conceptually straightforward, it is technically challenging as the composition operator acting on the space of, say, $\mathcal C^4$ functions is not differentiable. 
Nevertheless, we are able to show that the tangent space admits a hyperbolic splitting.
To do this, we work in the {\em decomposition space} introduced by Martens in \cite{Marco},
see Section~\ref{sec:decomp} of this paper for the necessary background.

\begin{thmx}
The renormalization operator $\mathcal R$ acting on the space of dissipative gap mappings has a hyperbolic splitting. More precisely, if $f$ is an infinitely renormalizable $\mathcal C^3$ dissipative gap mapping then for any
$\delta\in(0,1),$ and for all $n$
sufficiently big,
the derivative of the renormalization operator
acting on the decomposition space $\underline{\mathcal{D}}$
%$D\underline{\mathcal{R}}_{\underline{\mathcal{R}}^n\underline f}$,
%$D\mathcal R_{\mathcal R^n(f)},$ 
satisfies the following:
\begin{itemize}
\item $T_{\underline{\mathcal{R}}_{\underline{\mathcal{R}}^n\underline f}} \underline{\mathcal{D}}=E^u\oplus E^s,$ and the subspace $E^u$ is one dimensional. 

\item For any vector $v\in E^u$, we have that 
$\|D\underline{\mathcal{R}}_{\underline{\mathcal{R}}^n\underline f}v\|\geq \lambda_1\|v\|$, where $|\lambda_1|>1/\delta$.
\item For any
 $v\in E^s$, we have that 
$\|D\underline{\mathcal{R}}_{\underline{\mathcal{R}}^n\underline f}v\|\leq \lambda\|v\|$, where $|\lambda|<\delta$.
\end{itemize}
\end{thmx}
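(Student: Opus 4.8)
The plan is to prove the theorem entirely inside the decomposition space $\underline{\mathcal{D}}$, where, in contrast to the classical set‑up recalled in the introduction, the renormalization operator $\underline{\mathcal{R}}$ is a genuinely differentiable map between Banach manifolds; this differentiability, together with the explicit description of the decomposition of $\underline{\mathcal{R}}\,\underline f$ in terms of that of $\underline f$, is what we import from Section~\ref{sec:decomp}. The problem then reduces to computing and estimating the bounded linear operators $D\underline{\mathcal{R}}_{\underline g}$ at the points $\underline g=\underline{\mathcal{R}}^n\underline f$ of the renormalization orbit, for which we use the a priori (``real'') bounds for infinitely renormalizable dissipative gap mappings: the orbit $(\underline{\mathcal{R}}^n\underline f)_n$ stays in a compact part of $\underline{\mathcal{D}}$ with uniformly bounded nonlinearity, the combinatorics is controlled, and — crucially — the renormalized maps become increasingly dissipative (the maximal slope of $\underline{\mathcal{R}}^{n}\underline f$ does not exceed that of $\underline f$ raised to the return time, which is at least two, hence tends to $0$ super‑exponentially), so the rescaling built into $\underline{\mathcal{R}}$ at step $n$ expands by a factor comparable to the reciprocal of that maximal slope, which tends to $\infty$, while it contracts nonlinearities by the reciprocal factor.

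At each $\underline g$ in the orbit I would fix a splitting $T_{\underline g}\underline{\mathcal{D}}=H_{\underline g}\oplus V_{\underline g}$, where $H_{\underline g}$ is the one‑dimensional ``affine'' direction in which one moves the critical value relative to the dynamically defined renormalization interval — equivalently, the direction that changes the scaling datum of the renormalized map — and $V_{\underline g}$ is the complementary infinite‑dimensional ``nonlinear'' subspace of deformations that fix this datum and only perturb the shapes (nonlinearities) of the atoms. With respect to this splitting, the decomposition‑space formula for $\underline{\mathcal{R}}$ expresses $D\underline{\mathcal{R}}_{\underline g}$ as the composition of a reindexing/concatenation operator with the derivative of the rescaling (zoom) operation. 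The reindexing operator is uniformly bounded because the decomposition‑space norm is essentially additive over atoms, so that concatenation of near‑identity atoms is a bounded operation irrespective of how long the combinatorial word is; hence all the hyperbolic behaviour comes from the zoom derivative, together with the dependence of the zoom amount $|I_n|$ on $\underline g$.

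The two estimates are then: on $V_{\underline g}$, the nonlinearity of a long composition obeys the chain rule $N(g_m\circ\cdots\circ g_1)=\sum_k N(g_k)\circ(g_{k-1}\circ\cdots\circ g_1)\cdot D(g_{k-1}\circ\cdots\circ g_1)$, and since every branch is uniformly contracting the weights $D(g_{k-1}\circ\cdots\circ g_1)$ form a summable geometric‑type sequence; after the reciprocal zoom factor this gives $\|D\underline{\mathcal{R}}_{\underline g}|_{V_{\underline g}}\|<\delta$ once $n$ is large. On $H_{\underline g}$, moving the critical value by $t$ moves it, after zooming the renormalization interval to unit size, by $t$ times the zoom factor, which exceeds $1/\delta$ for $n$ large; this is the one‑dimensional expanding block. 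One must also check that the off‑diagonal block $V_{\underline g}\to H_{\underline g}$ — the first‑order feedback of a pure shape perturbation onto the scaling datum — is small, i.e.\ that the chosen datum is essentially insensitive to shape deformations, so that $D\underline{\mathcal{R}}_{\underline g}$ is block upper‑triangular up to a controllably small error. Given such a structure along the cocycle $(D\underline{\mathcal{R}}_{\underline{\mathcal{R}}^n\underline f})_n$ — a one‑dimensional block expanding by more than $1/\delta$, the complementary block contracting by less than $\delta$, uniformly bounded coupling — a standard graph‑transform (cone‑field) argument produces the invariant splitting $T\underline{\mathcal{D}}=E^u\oplus E^s$ with $E^u$ one‑dimensional: $E^u$ is the image of $H$ under the graph transform, $E^s$ its invariant complement, and the rates are those of the diagonal blocks up to a factor tending to $1$ as the coupling becomes negligible.

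The step I expect to be the main obstacle is making (ii)–(iii) uniform and intrinsic: pinning down the correct one‑dimensional affine datum on the decomposition space, proving that $D\underline{\mathcal{R}}$ is approximately block‑triangular for it with the cross‑term genuinely negligible, and — most delicate — extracting from the real bounds the quantitative degeneration of the scaling ratios (equivalently, the fact that $\underline{\mathcal{R}}^n\underline f$ becomes both increasingly dissipative and increasingly close to affine) that forces $\delta\to0$ and $1/\delta\to\infty$ as $n\to\infty$. Controlling the reindexing/concatenation operator and the derivative of the zoom map in the decomposition‑space norm, uniformly along the orbit, is where the bookkeeping of Section~\ref{sec:decomp} does the heavy lifting.
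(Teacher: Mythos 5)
Your outline has the right general shape, but it contains a gap at the step you flag as delicate, and that gap is not cosmetic: the hoped-for block upper-triangularity is false, and repairing it is exactly where the paper's argument does something you did not anticipate.

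The paper writes $D\underline{\mathcal R}_{\underline f}$ as a block matrix $\begin{pmatrix}A&B\\ C&D\end{pmatrix}$ with respect to the splitting $\mathbb R^3\times(X\times X)$ (coordinates $(\alpha,\beta,b)$ and $(\eta_L,\eta_R)$), and in Lemmas~\ref{lemma partial derivatives of abb tilde respec to abb}--\ref{lemma entries of D matrix} it estimates every entry. Your $H$ is essentially the $b$-direction and your $V$ everything else, and you assert that the $V\to H$ coupling (``feedback of a pure shape perturbation onto the scaling datum'') is controllably small. That is not what happens. Lemma~\ref{lemma entries of B_f matrix} gives $\partial\tilde b/\partial\eta_L\asymp b/|I'|$, which blows up as the renormalization level increases, and Lemma~\ref{lemma partial derivatives of abb tilde respec to abb} shows $\partial\tilde b/\partial\alpha$ and $\partial\tilde b/\partial\beta$ are likewise large. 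So the matrix is nowhere near block-triangular: the $b$-row is large across the board. A naive cone field centred on the pure $b$-axis with a narrow opening in the shape directions would not be invariant; the actual unstable direction has sizable components in $\alpha,\beta,\eta_L$.

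What saves the day in the paper is a different observation. After discarding the $\varepsilon$-small entries one is left with a $5\times5$ matrix whose nonzero eigenvalues are the roots of $\lambda^2-K_3\lambda-K_4M_1$, where $K_3=\partial\tilde b/\partial b\asymp 1/|I'|$, $K_4=\partial\tilde b/\partial\eta_L\asymp b/|I'|$, and $M_1=\partial\tilde\eta_L/\partial b$ is bounded (Corollary~\ref{cor:bdd N}). The crucial estimate is the \emph{ratio} $|K_4/K_3|\leq Cb$, together with the fact that $b\to0$ along the renormalization orbit (because the gap approaches $(b-1,b)$ and must contain $0$). This makes $K_4M_1/K_3^2$ small, so one root is $\approx K_3\to\infty$ and the other $\approx -K_4M_1/K_3\to0$. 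In other words, the off-diagonal entry is large in absolute terms, but small \emph{relative to} the dominant diagonal entry, and it is that relative smallness — driven by $b\to0$, a fact your sketch does not use — that produces the spectral gap. Your dissipativity/summable-weights argument does correctly handle the $D$-block (contraction of shape perturbations), and your identification of the expansion with the zoom factor $1/|I'|$ matches $\partial\tilde b/\partial b$. But without the $K_4/K_3\lesssim b\to0$ estimate, the argument as proposed would not close.
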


Gap mappings can be regarded as discontinuous circle mappings, and indeed they have a well-defined rotation number \cite{Br}, and they are infinitely renormalizable precisely when the rotation number is irrational. Consequently, from a combinatorial point of view they are similar to critical circle mappings. However, unlike critical circle mappings, the geometry of gap mappings is unbounded.
For example, for critical circle mappings
the quotient of the lengths of successive renormalization intervals is bounded away from zero and infinity \cite{dFdM}, but for gap mappings it diverges very fast \cite{AlvesColli2}. As a result, the renormalization operator for gap mappings does not seem to possess a natural extension to the limits of renormalization ({\em c.f.} \cite{MP}).

Renormalization theory was introduced into dynamical systems from statistical physics by 
Feigenbaum \cite{Feigenbaum} and Coullet-Tresser \cite{Coullet-Tresser,TC}
in the 1970's to explain the universality phenomena they observed in the quadratic family. 
They conjectured that the period-doubling renormalization operator acting on an appropriate space of analytic unimodal mappings is hyperbolic. The first proof of this conjecture was obtained using computer assistance in \cite{Lanford}.
The conjecture can be extended to all combinatorial types, and to multimodal mappings.
A conceptual proof was given for
analytic unimodal mappings of any combinatorial type in the work of Sullivan \cite{Sullivan} (see also \cite{dMvS}), 
McMullen \cite{McM1, McM2}, 
Lyubich \cite{Lyubich1, Lyubich2} 
and Avila-Lyubich \cite{Avila-Lyubich}.
This was extended to certain
smooth mappings in \cite{dFdMP},
and to analytic mappings with several critical points and bounded combinatorics by Smania \cite{Smania1,Smania2}.
Renormalization is intimately related with rigidity theory, and in many contexts, 
{\em e.g.} interval mappings and critical circle mappings,
exponential convergence of renormalization implies that two topologically conjugate infinitely renormalizable mappings are smoothly conjugate on their (measure-theoretic) attractors. However, for gap mappings
it is not the case that exponential convergence of renormalization implies rigidity; indeed, in general one can not expect topologically conjugate gap mappings to be $\mathcal C^1$ conjugate \cite{AlvesColli2}.

The aforementioned results on renormalization of interval mappings all depend on complex analytic tools, and consequently, many of the
tools developed in these works can only be applied to mappings with a critical point of integer order. 
The goal of studying mappings with arbitrary critical order was one of Martens' motivations for introducing the decomposition space, mentioned above.
This purely real approach has led to results on the renormalization in various contexts.
In \cite{Marco} this approach was used to establish the existence of periodic points of renormalization of any combinatorial type for unimodal mappings 
$x\mapsto x^\alpha+c$ where $\alpha>1$ is not necessarily an integer. For Lorenz mappings of certain monotone combinatorial types, \cite{MarcoWinclker} 
proved that there exists a global two-dimensional strong unstable manifold at every point in the limit set of renormalization using this approach.
In \cite{MP} they studied renormalization acting on the decomposition space for infinitely renormalizable critical circle mappings with a flat interval. They proved that for certain mappings with stationary, Fibonacci, combinatorics that the renormalization operator is hyperbolic, and that the class of mappings with Fibonacci combinatorics is a $\mathcal C^1$ manifold. 

Analytic gap mappings were studied in \cite{AlvesColli, AlvesColli2} using different methods to those that we use here. 
In the former paper, they proved hyperbolicity of renormalization in the special case of affine dissipative gap mappings, and in the latter paper,
they proved that the topological conjugacy classes of analytic infinitely renormalizable dissipative gap mappings are analytic manifolds.
We appropriately generalize these two results to the $\mathcal C^4$ case. Since the renormalization operator does not extend to the limits of renormalization, it seems to be difficult to build on the hyperbolicity result for affine mappings to extend 
it to smooth mappings (similarly to what was done in \cite{dFdMP}), and so we follow a different approach.
In \cite{AlvesColli2}, it is also proved that
two topologically conjugate dissipative gap mappings are H\"older conjugate. 
We improve this rigidity result, and give a simple proof that topologically conjugate dissipative gap mappings are quasisymmetrically conjugate, see Proposition~\ref{prop:qs}. 

This paper is organized as follows: In Section 2 we will provide the necessary background material on gap mappings, and in Section 3 we will describe the decomposition space of infinitely renormalizable gap mappings.
The estimate of the derivative of renormalization operator is done in Section~\ref{sec:derivative}, and it is the key technical result of our work.
In our setting we are able to obtain fairly complete results without any restrictions on the combinatorics of the mappings.
In Section 5 we use the estimates of Section 4 and ideas from \cite{MP} to show that the renormalization operator is hyperbolic and that
the conjugacy classes of dissipative gap mappings are $\mathcal C^1$ manifolds.

\section{Preliminaries}

\subsection{The dynamics of gap maps}

%\begin{definition}
%\label{Lorenz map}

In this section we collect the necessary background material on gap mappings, see
 \cite{AlvesColli2} for further results.

\medskip

A {\em Lorenz map} is a function $f:[a_L, a_R] \setminus \{ 0 \} \rightarrow [a_L, a_R]$ satisfying:
\begin{enumerate}
    \item[(i)] $a_L< 0 < a_R$;
    \item[(ii)] $f$ is continuous and strictly increasing in the intervals $[a_L,0)$ and $(0,a_R]$;
    \item[(iii)] the left and right limits at $0$ are $f(0^-)=a_R$ and $f(0^+)=a_L$.
\end{enumerate}
%\end{definition}
%\begin{definition}
%\label{def:gap map}
A {\em gap map} is a Lorenz map $f$ that is not surjective, i.e. a map satisfaying conditions (i), (ii), (iii) with $f(a_L) > f(a_R)$. In this case the {\em gap} is the interval $G_f=(f(a_R), f(a_L))$. When it will not cause confusion, we omit the subscript and denote the gap by $G$, see Figure~\ref{fig: slopes and branches of f}.
%\end{definition}

\begin{definition}
\label{def:dis gap map}
A dissipative gap map is a gap map $f$ that is differentiable in $[a_L, a_R] \setminus \{ 0 \}$ and satisfies: $0 < f'(x) \leq \nu$ for every $x \in [a_L, a_R] \setminus \{ 0 \}$, and for some real number $\nu = \nu_f \in (0,1)$. 
\end{definition}

The {\em space of dissipative gap maps} is defined in the following way. Consider

\begin{equation}
\begin{array}{ccl}
   \mathcal{D}_L^k  & = & \{ u_L:[-1,0) \rightarrow \mathbb{R}; \; u_L\in\mathrm{Diff}_+^k[-1,0],   \\
     & & u_L(0^-)=0, \; \mbox{and} \; \exists \nu \in (0,1) \; \mbox{s.t.} \; 0 < u_L'(x) \leq \nu, \; \forall x \in [-1,0) \},
\end{array}    
\end{equation}

\begin{equation}
\begin{array}{ccl}
   \mathcal{D}_R^k  & = & \{ u_R:(0, +1] \rightarrow \mathbb{R}; \; u_R \in\mathrm{Diff}_+^k[0,1],  \\
     & & u_R(0^+)=0, \; \mbox{and} \; \exists \nu \in (0,1) \; \mbox{s.t.} \; 0 < u_R'(x) \leq \nu, \; \forall x \in [-1,0) \},
\end{array}    
\end{equation}
and $\mathcal{D}^k = \mathcal{D}_L^k \times \mathcal{D}_R^k \times (0,1)$, where $\mathrm{Diff}_+^k[a,b]$ denotes the space of orientation preserving $\mathcal C^k$ diffeomorphisms on $(a,b)$, which are continuous on
$[a,b]$. We will always assume that $k\geq 3,$ and unless otherwise stated, the reader can assume that $k=3.$

For each element $(u_L, u_R, b) \in \mathcal{D}^k$ we associate a function $f:[-1,1] \setminus \{ 0 \} \rightarrow [-1,1]$ defined by

\begin{equation}
\label{definition of a typical gap map f}
f(x) = \left\{ \begin{array}{lcl}
u_L(x)+b    & ,& x \in [-1,0) \\
u_R(x)+b-1  & ,& x \in (0,+1]
\end{array}    \right.,
\end{equation}
and take $\nu = \nu_f \in (0,1)$ that bounds the derivative on each branch from above. It is not difficult to check that the interval $[b-1,b]$ is invariant under $f$ and $f$ restricted to $[b-1, b] \setminus \{ 0 \}$ is a dissipative gap map. For the sake of simplicity, we write $f=(u_L, u_R, b)$, and we use the following notations for the left and right branches of $f$:

\begin{equation}
\label{branches of f}
\begin{array}{clcl}
    f_L(x) & = u_L(x)+b &,& x <0\\
    f_R(x) & = u_R(x)+b-1 &,& x>0
\end{array}    
\end{equation}

\smallskip 

We endow $\mathcal{D}^k = \mathcal{D}_L^k \times \mathcal{D}_R^k \times (0,1)$ with the product topology. %where in $\mathcal{D}_L$ and in $\mathcal{D}_R$ we use, 
%in principle, the $\mathcal{C}^0$ topology, i.e. the topology induced by the metric $\mbox{d}(f,g)= \sup_{x}|f(x)-g(x)|$. 

\begin{definition}
\label{sign of f}
Let $f:[a_L, a_R] \setminus \{0 \} \rightarrow [a_L, a_R]$ be a dissipative gap map. We define the {sign} of $f$ by
\begin{equation}
\sigma_f:= \left\{
\begin{array}{cc}
-    & \mbox{ if } a_R \leq |a_L|\\
+    & \mbox{ if } a_R > |a_L|.
\end{array}    \right.
\end{equation}
\end{definition}
It is an easy consequence of this definition that for a dissipative gap map $f$ we have $\sigma_f=-$ if $G \subset [a_L,0)$ and $\sigma_f=+$ when $G \subset (0,a_R]$.
\subsection{Renormalization of dissipative gap mappings}
\begin{definition}
\label{renormalizable gap map}
A dissipative gap map $f:[a_L, a_R] \setminus \{0 \} \rightarrow [a_L, a_R]$ is renormalizable if there exists a positive integer $k$ such that 
\begin{enumerate}
    \item[(a)] $0 \notin \cup_{i=0}^k \overline{f^i(G)}$;
    \item[(b)] either $\overline{G}, \overline{f(G)}, \ldots, \overline{f^{k-1}(G)} \subset (b-1,0)$ and $\overline{f^k(G)} \subset (0,b)$, \linebreak or $\overline{G}, \overline{f(G)}, \ldots, \overline{f^{k-1}(G)} \subset (0,b)$ and $\overline{f^k(G)} \subset (b-1,0)$.
\end{enumerate}
 
\end{definition}

\begin{remark}
The positive number $k$ in Definition~\ref{renormalizable gap map} is chosen to be minimal so that (a) and (b) hold.
\end{remark}

\begin{definition}
\label{1st return map}
Let $f:[a_L, a_R] \setminus \{0 \} \rightarrow [a_L, a_R]$ be a renormalizable dissipative gap map, and consider $I'= [a_L', a_R' ]=I_f'$ the interval containing $0$ whose boundary points are the boundary points of $f^{k-1}(G)$ and $f^k(G)$ which are nearest to $0$, that is

\begin{equation}
\label{domain of R}
    \begin{array}{ccl}
    I'= &  [f^k(a_L), f^{k+1}(a_R)] & \mbox{ for } \sigma_f = -\\
    I'= & [f^{k+1}(a_L), f^k(a_R)] & \mbox{ for } \sigma_f=+.
    \end{array}
\end{equation}
The first return map $R=R_f$ to $I'$ is given by

\begin{equation}
R(x)= \left\{    \begin{array}{cl}
f^{k+2}(x)     &  \mbox{ if } x \in [f^k(a_L),0)\\
f^{k+1}(x)     &  \mbox{ if } x \in (0,f^{k+1}(a_R)]
\end{array}\right.,     
\end{equation}
in the case $\sigma_f=-$, and 

\begin{equation}
R(x)= \left\{    \begin{array}{cl}
f^{k+1}(x)     &  \mbox{ if } x \in [f^{k+1}(a_L),0)\\
f^{k+2}(x)     &  \mbox{ if } x \in (0,f^{k}(a_R)]
\end{array}\right.,     
\end{equation}
in the case $\sigma_f=+$. The renormalization of $f$, $\mathcal{R}f$, is the first return map $R$ rescaled and normalized to the interval $[-1,1]$ and given by
\begin{equation}
\mathcal{R}f(x)= \frac{1}{|I'|}R(|I'|x)
\end{equation}
for every $x \in [-1,1] \setminus \{0 \}$.
\end{definition}
In terms of the branches $f_L$ and $f_R$ defined in (\ref{branches of f}) the first return map $R$ is given by

\begin{equation}
R(x)= \left\{    \begin{array}{ll}
f_L^k \circ f_R \circ f_L (x)     &  \mbox{ if } x \in [f^k(a_L),0)\\
f_L^k \circ f_R (x)     &  \mbox{ if } x \in (0,f^{k+1}(a_R)]
\end{array}\right.,     
\end{equation}
in the case $\sigma_f=-$, and 

\begin{equation}
R(x)= \left\{    \begin{array}{ll}
f_R^k \circ f_L (x)     &  \mbox{ if } x \in [f^{k+1}(a_L),0)\\
f_R^k \circ f_L \circ f_R (x)     &  \mbox{ if } x \in (0,f^{k}(a_R)]
\end{array}\right.,     
\end{equation}
in the case $ \sigma_f=+ $.

From Definition~\ref{1st return map} we have a natural operator which sends a renormalizable dissipative gap map $f$ to its renormalization $\mathcal{R}f$, which is also a dissipative gap map:

\begin{definition}
\label{renormalization operator}
The renormalization operator is defined by
\begin{equation}
\begin{array}{cccll}
     \mathcal{R} & : & \mathcal{D}_{\mathcal{R}}^k & \rightarrow & \mathcal{D}^k  \\
     & & f & \mapsto & \mathcal{R}f 
\end{array}    
\end{equation}
where $\mathcal{R}f(x)= \frac{1}{|I'|}R(|I'|x)$, and $\mathcal{D}_{\mathcal{R}}^k \subset \mathcal{D}^k$ is the subset of all renormalizable dissipative gap maps in $\mathcal{D}^k$.
\end{definition}

From now on, we assume that the interval $[a_L, a_R]$ has size $1$. 

Although a dissipative gap map is not defined at $0$ we define the lateral orbits of $0$ taking $0_j^+ = f^j(0^+)= \lim_{x \rightarrow 0^+}f^j(x)$ and $0_j^- = f^j(0^-)= \lim_{x \rightarrow 0^-}f^j(x)$. We first observe that $0_j^+=f^{j-1}(b-1)$ and $0_j^-=f^{j-1}(b)$. The left and right future orbits of $0$ are the sequences $(0_j^+)_{j \geq 1}$ and $(0_j^-)_{j \geq 1}$ which are always defined unless there exists $j \geq 1$ such that either $0_j^+ = 0$ or $0_j^-=0$. Using this notation for the interval $I'$ defined in (\ref{domain of R}), we obtain 

\begin{equation}
\label{domain of R again}
    \begin{array}{ccl}
    I'= &  [0_{k+1}^+,0_{k+2}^-] = [f_L^k (b-1),f_L^k \circ f_R(b)]& \mbox{ for } \sigma_f = -\\
    && \\
    I'= & [0_{k+2}^+,0_{k+1}^-] = [f_R^k \circ f_L(b-1),f_R^k (b)]& \mbox{ for } \sigma_f=+.
    \end{array}
\end{equation}
See Figure~\ref{fig central interval} for an illustration of one example of case with $\sigma_f=-$. 

One can show inductively that for each gap mapping $f$ there are $n=n(f) \in \{ 0, 1, 2, \ldots \} \cup \{ \infty \} $ and a sequence of nested intervals $(I_i)_{0 \leq i < n+1}$, each one containing $0$, such that:

\begin{enumerate}
    \item[1.] the first return map $R_i$ to $I_i$ is a dissipative gap map, for every $0 \leq i <n+1$;
    
    \item[2.] $I_{i+1} = I_{R_i}'$, for every $0 \leq i <n$;
\end{enumerate}

If $n< \infty $ we say that $f$ is {\it finitely renormalizable} and $n$ {\it times renormalizable}, and if $n = \infty $ we say that $f$ is {\it infinitely renormalizable}. Moreover, we call $G_i = G_{R_i}$, $\sigma_i = \sigma_{R_i}$ and $k_i = k_{R_i}$, for every $0 \leq i < n+1$. In particular, this defines the {\it combinatorics} $\Gamma = \Gamma (f)$ for $f$, given by the (finite or infinite) sequence

\begin{equation}
    \Gamma = ((\sigma_i, k_i))_{1 \leq i < n+1}.
\end{equation}
\begin{prop}\cite{AlvesColli2}
Two infinitely renormalizable dissipative gap mappings that have the same combinatorics are topologically conjugate.
\end{prop}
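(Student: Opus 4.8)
The plan is to construct the topological conjugacy explicitly by propagating an order-preserving correspondence between the renormalization intervals of the two mappings, using the combinatorial data to identify which pieces of the dynamics match up. Let $f$ and $\tilde f$ be infinitely renormalizable dissipative gap mappings with the same combinatorics $\Gamma = ((\sigma_i,k_i))_{i\ge 1}$. For each level $i$ we have nested intervals $I_i \ni 0$ and $\tilde I_i \ni 0$, and the first-return maps $R_i$, $\tilde R_i$ are again dissipative gap mappings with the same sign $\sigma_i$ and the same return time $k_i$. The first step is to observe that, because the combinatorics agree, the orbit of the gap $G$ of $f$ and of $\tilde G$ of $\tilde f$ visit the same symbolic itinerary: the partition of $[a_L,a_R]\setminus\{0\}$ into monotone branches of $f^{k_i+2}$ (or $f^{k_i+1}$, depending on the sign) restricted to $I_i$ has the same combinatorial structure for both maps, level by level.

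Next I would build the conjugacy $h$ on the grand orbit of the endpoints. Concretely, at each level $i$ the dynamics outside $I_{i+1}$ but inside $I_i$ is a finite chain of homeomorphisms between intervals (the pieces $f(G), f^2(G),\dots$ together with their complementary gaps in $I_i$), and one defines $h$ on this ``fundamental domain'' by declaring it to send the $j$-th piece of $f$ affinely — or by any fixed choice of homeomorphism — onto the $j$-th piece of $\tilde f$, and then extending by the dynamics so that $h\circ f = \tilde f\circ h$ wherever both sides make sense. Doing this compatibly across all levels requires checking that the definition at level $i+1$ is consistent with the definition already made at level $i$ on the overlap, which is where the nesting $I_{i+1}\subset I_i$ and the fact that $I_{i+1}=I'_{R_i}$ is built from specific iterates of the endpoints $0^\pm_j$ are used. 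The infinitely renormalizable hypothesis guarantees $\bigcap_i I_i$ is a single point (namely $0$, or the limit of the renormalization intervals), so the grand orbit of the endpoints, together with $\{0\}$, is dense in $[a_L,a_R]$.

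Then I would extend $h$ continuously to all of $[a_L,a_R]$. Density of the grand orbit of endpoints means that $h$, defined and order-preserving on a dense set, extends uniquely to a monotone surjection; the key point to verify is that this extension is genuinely a homeomorphism, i.e. that $h$ does not collapse any nondegenerate interval to a point and is not discontinuous. Monotonicity gives that the only possible failures are jumps, and a jump in $h$ would force a wandering interval or an interval of points with the same itinerary for $\tilde f$, which is incompatible with $\tilde f$ being infinitely renormalizable (every point's itinerary is determined by which $I_i$, and which piece of the return partition, it falls into at each stage, and these shrink to points). Finally one checks $h\circ f = \tilde f\circ h$ holds everywhere by continuity, since it holds on the dense grand orbit.

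The main obstacle I expect is the consistency check across levels and, relatedly, showing the nondegeneracy of $h$ at the accumulation point of the renormalization intervals: one has to control how the fundamental domain at level $i$ sits inside that at level $i-1$ and argue that the nested intersections shrink to points for \emph{both} $f$ and $\tilde f$ purely from the combinatorics, without any metric (bounded geometry) input — which is exactly the subtlety that distinguishes gap mappings from critical circle mappings. This is presumably handled in \cite{AlvesColli2} by exploiting that the gap is a wandering interval whose orbit, together with the shrinking central intervals, exhausts the phase space; the dissipativity ($f'\le\nu<1$ on each branch) may also be invoked to rule out degenerate behaviour, though for the purely topological statement the combinatorial argument should suffice.
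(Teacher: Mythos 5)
The paper does not actually prove this proposition: it is stated with the citation \cite{AlvesColli2}, so there is no internal proof to compare against. Evaluated on its own merits, your sketch follows the natural and essentially correct route: define $h$ on the orbit of the gap (choosing a base homeomorphism $h_0:G\to\tilde G$ and extending by $h=\tilde f^n\circ h_0\circ f^{-n}$ on $f^n(G)$, which removes your worry about cross-level consistency, since the forward iterates of $G$ are pairwise disjoint), then extend to the closure and verify the extension is a homeomorphism by checking that the complementary pieces of the Cantor set shrink to points.

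The one place where your reasoning goes off track is the closing suggestion that ``for the purely topological statement the combinatorial argument should suffice.'' It does not. The nondegeneracy step --- that the nested pieces of the Cantor set $\Lambda=[b-1,b]\setminus\bigcup_{n\ge 0}f^n(G)$ have diameters tending to zero, equivalently that $\Lambda$ has empty interior and in particular that $|I_i|\to 0$ --- is a metric statement, and combinatorics alone cannot supply it. Without a derivative bound one can expect Denjoy-type behaviour: a gap map with the same combinatorics but an additional hidden wandering interval, so that the itinerary does not determine the point and the natural ``conjugacy'' collapses an interval. What rescues the argument here is precisely the hypothesis built into the definition of a \emph{dissipative} gap map: $0<f'\le\nu<1$ on both branches. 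This gives $|f^n(J)|\le\nu^n|J|$ for any interval $J$ whose first $n$ iterates avoid $0$, which forces $|I_i|\to 0$ and rules out nondegenerate pieces of $\Lambda$, hence rules out jumps of $h$. So the dissipativity is not an optional extra that ``may also be invoked''; it is the essential metric input that makes the combinatorial construction yield an actual homeomorphism. With that correction, your outline is a reasonable account of the proof in \cite{AlvesColli2}.
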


For more details about this inductive definition and related properties see \cite{AlvesColli2}.

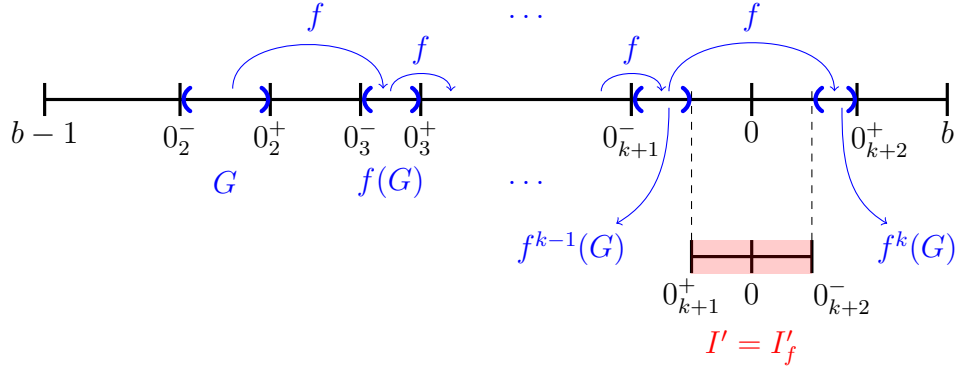
\begin{figure}
\label{fig central interval}
	\begin{center}
\begin{tikzpicture}[mydrawstyle/.style={draw=black, very thick}, node distance=0.5cm, x=2mm, y=1.1mm, z=1mm]

\node (1) {};
\node (2) [right of=1] {};
\node (3) [right of=2] {};
\node (4) [right of=3] {};
\node (5) [right of=4] {};
\node (6) [right of=5] {};
\node (7) [right of=6] {};
\node (8) [right of=7] {};
\node (9) [right of=8] {};
\node (10) [right of=9] {};
\node (11) [right of=10] {};
\node (12) [right of=11] {};
\node (13) [right of=12] {};
\node (14) [right of=13] {};
\node (15) [right of=14] {};
\node (16) [right of=15] {};
\node (17) [right of=16] {};
\node (18) [right of=17] {};

%\draw [->] (2) to [out=250,in=290] (4); % seta embaixo
\draw [->,blue] (6.north) to [out=90,in=90] (10.north); % seta na parte de cima

\draw [->,blue] (23,1) to [out=90,in=90] (27,1);
\draw [blue] ( 25,6) node {$f$};

\draw [->,blue] (37,1) to [out=90,in=90] (41,1);
\draw [blue] ( 39,6) node {$f$};

\draw [->,blue] (41.5,1) to [out=90,in=90] (52.5,1);
\draw [blue] (47,10) node {$f$};

\draw [->,blue] (41.5,-1) to [out=270,in=30] (38,-15);
\draw [->,blue] (53,-1) to [out=270,in=150] (56,-15);

\draw[mydrawstyle, -](0,0)--(60,0) node at (-6,0)[left]{};
\draw[mydrawstyle](0,-2)--(0,2) node[below=10]{$b-1$};
\draw[mydrawstyle](9,-2)--(9,2) node[below=10]{$0_2^-$};
\draw[mydrawstyle](15,-2)--(15,2) node[below=10]{$0_2^+$};
%\draw[mydrawstyle](12,28)--(12,32) node[below=13]{$G$};
\draw [blue] (12,-10) node {$G$};
\draw [blue] (18,10) node {$f$};

\draw[mydrawstyle](21,-2)--(21,2) node[below=10]{$0_3^-$};
\draw[mydrawstyle](25,-2)--(25,2) node[below=10]{$0_3^+$};
\draw [blue] (23,-10) node {$f(G)$};

\draw [blue] (32,-10) node {$\ldots$};
\draw [blue] (32,10)  node {$\ldots$};

\draw[mydrawstyle](39,-2)--(39,2) node[below=10]{$0_{k+1}^-$};
%\draw[mydrawstyle](43,-2)--(43,2) node[above=10]{$0_{k+1}^+$};
%\draw [blue] (41,-10) node {$f^{k-1}(G)$};
\draw [blue] (35,-18) node {$f^{k-1}(G)$};

%\draw[mydrawstyle](51,-2)--(51,2) node[above=10]{$0_{k+2}^-$};
\draw [black] (55.5,-5) node {$0_{k+2}^+$};

\draw[mydrawstyle](54,-2)--(54,2) node[below=10]{};
%\draw [blue] (52.5,-10) node {$f^{k}(G)$};
\draw [blue] (58,-18) node {$f^{k}(G)$};

\draw[mydrawstyle](47,-2)--(47,2) node[below=10]{$0$};
\draw[mydrawstyle](60,-2)--(60,2) node[below=10]{$b$};

 % intervalo central
 \draw [dashed] (43,-19) -- (43,1);
 \draw [dashed] (51,-19) -- (51,1);
 \draw[mydrawstyle, -](43,-19)--(51,-19) node at (-6,0)[left]{};
 
 \draw[mydrawstyle](47,-21)--(47,-17) node[below=12]{$0$};
 \draw[mydrawstyle](43,-21)--(43,-17) node[below=10]{$0_{k+1}^+$};
 \draw[mydrawstyle](51,-21)--(51,-17) node[below=10]{};
 \draw [black] (53,-24) node {$0_{k+2}^-$};

\draw [red] (47,-30) node {$I'=I'_f$};

% prenchimento em vermelho do intervalo central 
\fill[fill=red, opacity=0.2](43,-21)--(43,-17)--(51,-17)--(51,-21)--(43,-21);

% os parenteses azuis dos intervalos abertos
\draw[(-, ultra thick, blue] (9.1,0) -- (9.3,0);
\draw[-), ultra thick, blue] (14.8,0) -- (15,0);

\draw[(-, ultra thick, blue] (21.1,0) -- (21.3,0);
\draw[-), ultra thick, blue] (24.8,0) -- (25,0);

\draw[(-, ultra thick, blue] (39.1,0) -- (39.3,0);
\draw[-), ultra thick, blue] (42.8,0) -- (43,0);

\draw[(-, ultra thick, blue] (51.1,0) -- (51.3,0);
\draw[-), ultra thick, blue] (53.8,0) -- (54,0);

%\fill[fill=blue, opacity=0.2](0,29)--(0,31)--(9,31)--(9,29)--(0,29);
%\draw[mydrawstyle, draw=blue](1,29)--(0,29)--(0,31)--(1,31);
%\draw[mydrawstyle, draw=blue](9,30) arc (0:45:1.8);
%\draw[mydrawstyle, draw=blue](9,30) arc (0:-45:1.8);
\end{tikzpicture}
\caption{$I'$: the domain of the first return map $R$ in case $\sigma=-$.}
\end{center}
\end{figure}

\subsection{Quasisymmetric rigidity}
We know that two dissipative gap mappings with the same irrational rotation number are
H\"older conjugate \cite[Theorem A]{AlvesColli2}; however, more is true. Let $\kappa\geq 1$, and let $I$ denote an interval in $\mathbb R$. Recall, that a mapping $h:I\to I$ is $\kappa$-{\it quasisymmetric} if for any $x\in I$ and $a>0$ so that
$x-a$ and $x+a$ are in $I$, we have
$$\frac{1}{\kappa}\leq\frac{|h(x+a)-h(x)|}{|h(x)-h(x-a)|}\leq \kappa.$$

\begin{prop}
\label{prop:qs}
Suppose that $f,g$ are two dissipative gap maps with the same irrational rotation
number, then $f$ and $g$ are quasisymmetrically conjugate.
\end{prop}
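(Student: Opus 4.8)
The plan is to deduce quasisymmetric conjugacy from the known combinatorial and geometric structure of infinitely renormalizable dissipative gap maps. Since $f$ and $g$ have the same irrational rotation number, they are infinitely renormalizable with the same combinatorics $\Gamma$, and by the cited proposition of \cite{AlvesColli2} they are topologically conjugate; call the conjugacy $h$. The strategy is to show $h$ is quasisymmetric by controlling the geometry of the dynamically defined partitions. First I would recall that for each $n$ the renormalization data produce a nested sequence of intervals $I_n^f \ni 0$ and the associated \emph{dynamical partition} $\mathcal P_n^f$ of the base interval into the pieces $f^j(I_n^f \setminus I_{n+1}^f)$ and related orbit intervals up to the return time; the same construction for $g$ gives $\mathcal P_n^g$, and $h$ maps $\mathcal P_n^f$ onto $\mathcal P_n^g$ combinatorially because the conjugacy respects the orbit structure. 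Thus it suffices to prove that these partitions have \emph{bounded geometry in the quasisymmetric sense}: adjacent atoms of $\mathcal P_n^f$ (and of $\mathcal P_n^g$) have comparable lengths, with a constant independent of $n$, and that the ratios distort by a bounded factor under $h$.

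The key mechanism is that each branch $f_L$, $f_R$ is a $C^3$ (indeed one may assume $C^3$) diffeomorphism with derivative bounded away from $0$ and above by $\nu<1$, so the real Koebe principle / bounded distortion applies to the high iterates $f_L^{k_i}\circ f_R\circ f_L$ etc.\ that define the return maps, provided one has \emph{a priori} space around the relevant intervals. The dissipativity ($f'\le\nu<1$) is exactly what gives this: the branches are uniform contractions, so the pull-backs defining the partition atoms have exponentially small length and, more importantly, uniformly bounded distortion since the sum of lengths along any orbit segment of the gap or of $I_n$ is bounded by a geometric series. I would make this precise by establishing a \textbf{bounded distortion lemma}: there is $C>0$ such that for every $n$ and every branch of the first return map $R_n$, the nonlinearity (ratio of derivatives, or the $\log$-distortion) is at most $C$; this follows from the chain rule applied to $f_L^{k}\circ\cdots$, the bound $|f''|/|f'|\le \mathrm{const}$ coming from $f\in C^3$ with $f'$ bounded below, and the fact that the iterates land in pairwise disjoint intervals whose total length is $\le |I_0|$. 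With uniformly bounded distortion of the return maps, the partitions $\mathcal P_n$ refine with bounded geometry, and since this holds simultaneously for $f$ and $g$ with the same combinatorial skeleton, a standard argument (comparing ratios across refinements, as in the proof that conjugacies between maps with bounded geometry are quasisymmetric) shows $h$ is $\kappa$-quasisymmetric for some $\kappa=\kappa(C)$.

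Concretely the steps are: (1) set up the dynamical partitions $\mathcal P_n^f$, $\mathcal P_n^g$ and note $h(\mathcal P_n^f)=\mathcal P_n^g$; (2) prove the bounded distortion lemma for the return maps using dissipativity and the $C^3$ (or $C^{1+\mathrm{bv}}$) hypothesis — here the contraction $\nu<1$ replaces the usual need for a Schwarzian or complex-analytic input, since disjointness of the iterated images plus summability of their lengths gives the Koebe-type control directly; (3) deduce that consecutive atoms of $\mathcal P_n^f$ are comparable and that $\mathcal P_{n+1}^f$ subdivides each atom of $\mathcal P_n^f$ with bounded geometry; (4) transfer these estimates through $h$ using that $h$ conjugates the first return maps $R_n^f$ to $R_n^g$ and both have bounded distortion, obtaining uniform bounds on cross-ratios $|h(x+a)-h(x)|/|h(x)-h(x-a)|$ first for $x,a$ adapted to the partitions and then, by a density/monotonicity argument, for all admissible $x,a$. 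The main obstacle I expect is step (2)–(3): the geometry of gap maps is \emph{unbounded} (the ratios $|I_{n+1}|/|I_n|$ tend to $0$ extremely fast, as emphasized in the introduction), so one must be careful that ``bounded geometry'' is meant only for adjacent atoms and across a single level of refinement, not for the nested intervals themselves; controlling the atom containing the gap $G_n$, which can be disproportionately large or small, is the delicate point, and one resolves it precisely by exploiting that the gap is a wandering interval whose forward images are disjoint, so its contribution to any distortion sum is a convergent series. Once the bounded distortion estimates are in place, the quasisymmetry of $h$ is routine.
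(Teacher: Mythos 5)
Your approach is genuinely different from the paper's, and it has a real gap. The paper proves this proposition in a few lines by a trick that sidesteps all renormalization geometry: since each branch of a dissipative gap map has derivative bounded above by $\nu<1$, the inverse $f^{-1}$ extends to an \emph{expanding}, degree-three covering map of the circle; two such expanding coverings that are topologically conjugate (which they are, as $f$ and $g$ are) are automatically quasisymmetrically conjugate by the standard argument for expanding circle maps (cf.\ II.2, Exercise 2.3 of de Melo--van Strien), and transporting the conjugacy back through $f^{-1}$, $g^{-1}$ finishes the proof. No dynamical partitions, no distortion estimates, no mention of renormalization. This is precisely why the authors describe their argument as ``simple.''

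Your route, by contrast, is the dynamical-partition argument familiar from critical circle maps, and the step where it breaks is the one you yourself flag as delicate. Bounded distortion of the return maps (your step~(2)) is plausible and your reasoning for it is fine, but it does \emph{not} imply step~(3), that adjacent atoms of $\mathcal P_n$ have comparable lengths with a constant uniform in $n$. Bounded distortion of a branch only propagates comparability: it says that if two subintervals of the domain are comparable then so are their images; it gives no control on the ratio of an atom to its neighbour unless one already has it at the previous level, and that base case is exactly what fails. The paper stresses this explicitly: unlike for critical circle maps, the successive renormalization intervals of a gap map satisfy $|I_{n+1}|/|I_n|\to 0$ very fast, and indeed one cannot in general expect topologically conjugate gap maps to be $\mathcal C^1$ conjugate. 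The fix you suggest for the troublesome atom near the gap --- summability of the lengths of the wandering-interval orbit --- again gives bounded distortion, not comparability of adjacent atoms, so it does not close the hole. To push your program through one would need a substitute for bounded geometry that tolerates arbitrarily degenerate scaling ratios and is simultaneously controlled for $f$ and $g$; nothing in the proposal supplies this. In short, the mechanism that actually makes the quasisymmetry work is the uniform expansion of the inverse branches, and your argument never uses it; once you invert, the entire difficulty you are wrestling with in steps~(2)--(4) evaporates.
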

\begin{proof}
Let $\phi,\psi$ denote $f^{-1},g^{-1}$, respectively.
Then $\phi$ and $\psi$ can be extended to
expanding, degree three, covering maps of the circle, which we will
continue to denote by $\phi$ and $\psi$.
These extended mappings are topologically conjugate, and so they 
are quasisymmetrically conjugate. To see this, one may argue exactly
as described in II.2, Exercise 2.3 of \cite{dMvS}.
Thus there exists a quasisymmetric mapping $h$ of the circle so that 
$h\circ \phi(z)=\psi\circ h(z)$. Thus we have that 
$h^{-1}\circ g=f\circ h^{-1},$ and it is well known that the inverse of a quasisymmetric mapping is quasisymmetric.
\end{proof}

\subsection{Convergence of renormalization to affine maps}
\begin{prop}
Suppose that $f$ is an infinitely renormalizable
dissipative gap mapping. Then 
for any $\varepsilon>0$ there exists $n_0\in\mathbb N$
so that for all $n\geq n_0,$ there exists an affine gap mapping
$g_n$, so that
$\|R^n f-g_n\|_{\mathcal C^3}\leq\varepsilon.$
\end{prop}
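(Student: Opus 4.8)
The plan is to show that the non-affine part of $R^nf$, measured in $\mathcal C^3$, decays to zero. The natural language for this is the decomposition of a dissipative gap map into its two branches $f_L = u_L + b$ and $f_R = u_R + b - 1$, together with the observation (from Definition~\ref{1st return map}) that the branches of the first return map $R_f$ are obtained by composing long chains of the form $f_L^{k}\circ f_R\circ f_L$ (or the analogous chains with $L$ and $R$ interchanged), then rescaling by the small factor $|I'|$ to the interval $[-1,1]$. The key dynamical input is that for a dissipative gap map each branch contracts: $0 < f_L', f_R' \le \nu < 1$. Hence the derivative of an $m$-fold composition is at most $\nu^m$, and more importantly the image $f^{k_i}(G_i)$ etc.\ of the relevant pieces shrinks geometrically, so $|I_{i+1}|/|I_i|$ and the lengths of the monotone branches of $R_i$ go to zero at a rate that can be controlled. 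This is exactly the ``unbounded geometry'' phenomenon recalled in the introduction (the ratios diverge, equivalently the successive intervals shrink superexponentially).

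First I would set up the nonlinearity (or Schwarzian-type) quantities. For a $\mathcal C^3$ diffeomorphism $\varphi$ on an interval $J$, the relevant ``distortion'' data are $N\varphi = (\log\varphi')' = \varphi''/\varphi'$ and its derivative, and the point is that the $\mathcal C^3$-distance from $R^nf$ (rescaled) to the affine map with the same endpoints is controlled by the $\mathcal C^0$-norms of the nonlinearity and its derivative of the appropriate composed branch, read in the rescaled coordinate. So I would: (1) write each branch of $R^nf$ as $A_n\circ \Phi_n\circ A_n^{-1}$-type conjugacy, where $A_n$ is the affine rescaling to $[-1,1]$ and $\Phi_n$ is a composition of branches of $f$ on a tiny interval $I_n$; (2) use the chain rule for nonlinearity, $N(\varphi\circ\psi) = (N\varphi\circ\psi)\cdot\psi' + N\psi$, to express the nonlinearity of $\Phi_n$; (3) observe that when this is conjugated by the affine map $A_n$ of derivative $1/|I_n|$, the nonlinearity of the conjugated map at a point $x\in[-1,1]$ is $|I_n|\cdot N\Phi_n(A_n x)$, which carries a factor $|I_n|\to 0$; similarly the $\mathcal C^0$ norm of the derivative of the nonlinearity of the conjugated map picks up a factor $|I_n|^2$. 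Since $N\Phi_n$ and its derivative are bounded uniformly over the chain (here one uses that $f\in\mathcal C^3$, so $\|Nf_L\|_{\mathcal C^1}, \|Nf_R\|_{\mathcal C^1}$ are finite, together with the contraction $\nu<1$ to sum the telescoping chain-rule expansion — a geometric series — independently of the length $k_n$ of the chain), we get $\|NR^nf_{L,R}\|_{\mathcal C^0}\le C|I_n|$ and a similar bound on the next derivative. Letting $g_n$ be the affine map agreeing with $R^nf$ at the appropriate endpoints of each branch, one then upgrades control of the nonlinearity to control of $\|R^nf-g_n\|_{\mathcal C^3}$ by integrating: control of $N$ and $(N)'$ plus the fact that the branches have uniformly bounded derivatives (at most $\nu^{k_n+\cdots}\le \nu<1$) and map $[-1,1]$ essentially onto a definite-size subinterval bounds $\|R^nf-g_n\|_{\mathcal C^3}$ by $C|I_n|$. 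Since $|I_n|\to 0$ as $n\to\infty$ (each return interval is strictly nested and shrinks, a fact recorded in the inductive construction of the $I_i$), the proposition follows with $g_n$ the said affine gap map, provided we also check $g_n$ is genuinely a gap map — i.e.\ that it has a gap and dissipative slopes — which is automatic for $n$ large since $R^nf$ is $\mathcal C^3$-close to it and the gap/dissipativity conditions are open (the slopes are $\le \nu^{k_n+2} < 1$, bounded away from $1$, and the gap $G_{R^nf}$ has definite relative size by the standard a priori bounds / real bounds for infinitely renormalizable maps, if needed, or more simply because a gap is present at every renormalization level by definition).

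I expect the main obstacle to be obtaining the \emph{uniform} bound on $\|N\Phi_n\|_{\mathcal C^1}$ along chains of unbounded length $k_n$. The chain-rule formula for $N(\psi_m\circ\cdots\circ\psi_1)$ is a sum of $m$ terms, the $j$-th of which is $N\psi_j$ composed with $\psi_{j-1}\circ\cdots\circ\psi_1$ and multiplied by $(\psi_{j-1}\circ\cdots\circ\psi_1)'$; the crucial point is that this last derivative is $\le \nu^{j-1}$, so the sum is dominated by $\|Nf_L\|_0\sum_{j\ge 0}\nu^j<\infty$, uniformly in $m$ — this is where dissipativity ($\nu<1$) does the essential work, and it is precisely the mechanism that fails for critical circle maps. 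Differentiating once more to control $(N\Phi_n)'$ produces extra factors of derivatives of the tail compositions, all still bounded by powers of $\nu$, so the same geometric-series argument closes. The only remaining care is bookkeeping: handling the two cases $\sigma_f=\pm$, the two branches of $R^nf$ separately, and keeping track of where the cut point $0$ lands (here one uses property (a) of Definition~\ref{renormalizable gap map} that $0$ avoids the relevant orbit closure, so the compositions defining the branches are genuinely $\mathcal C^3$ on the whole of $I_n\setminus\{0\}$). None of this is conceptually hard; it is the standard ``bounded distortion plus rescaling kills nonlinearity'' argument, made to run because each branch is a contraction.
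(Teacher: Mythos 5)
Your argument is correct and follows essentially the same route as the paper's: the key mechanism in both is the chain rule for nonlinearity/Schwarzian of a long composition, summed as a geometric series thanks to the dissipativity bound $0<Df\le\nu<1$, which gives a bound on $N\Phi_n$ and $S\Phi_n$ that is uniform in the length of the chain. The only (minor) difference is how the smallness in $\mathcal C^3$ is then extracted: the paper notes that $Nf^{q_n}$ and $Sf^{q_n}$ stay bounded while $Df^{q_n}\to 0$, so $D^2f^{q_n}$ and $D^3f^{q_n}$ already tend to zero before any rescaling, whereas you instead rescale to $[-1,1]$ and observe that the zoom operator contributes factors of $|I_n|$ and $|I_n|^2$ to the nonlinearity and its derivative — an equivalent observation, and arguably a bit more careful since the proposition is stated for the rescaled map $\mathcal R^n f$ rather than for $f^{q_n}$ itself.
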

\begin{proof}
Let us recall the formulas for the nonlinearity, $N$ and Schwarzian derivative, $S$, of iterates of $f$:
$$Nf^k(x)=\sum_{i=0}^{k-1}Nf(f^i(x))|Df^i(x)|,$$ and
$$ Sf^k(x) = \sum_{i=0}^{k-1}Sf^i(x)|Df^i(x)|^2.$$
Since the derivative of $f$ is bounded away from one, these quantities are bounded in terms of $Nf$ and $Sf$, respectively.
But now, since $|Nf|$ is bounded, say by $C_1>0$ we have that there exists $C_2>0$ so that
$$|Nf^k|=\left|\frac{D^2f^k}{Df^k}\right|<C_2.$$
Since $Df^k\rightarrow 0,$ as $k$ tends to $\infty$, so does $D^2f^k$.

Now, 
$$Sf^k=\frac{D^3f^k}{Df^k}-\frac{3}{2}(Nf^k)^2,$$
and arguing in the same way, we have that $D^3f^k\to 0$ as $k\to\infty.$ Thus by taking $k$ large enough, $f^k$ is arbitrarily close to its affine part in the $\mathcal C^3$-topology.
\end{proof}

\section{Renormalization of decomposed mappings}
\label{sec:decomp}

In this section we recall some background material on the nonlinearity operator and decomposition spaces; for further details see \cite{Marco, MarcoWinclker}. We then define the decomposition space of dissipative gap mappings, and describe the action of renormalization on this space.

%It is well known that composition operator acting on $\mathcal{C}^3$ is not differentiable. As a consequence our renormalization operator is either not differentiable on $\mathcal{D}$. In order to overcome this we will work in the decomposition space $\underline{\mathcal D}$ where the corresponding renormalization operator will be differentiable. We will then show that such a renormalization operator is hyperbolic and using standard cone field method we will construct the $\mathcal{C}^1$ manifold in $\underline{\mathcal D}$ which we will then project on $\mathcal{D}$.

%Let us recall some facts about the nonlinearity operator, see \cite{Marco}.

\subsection{The nonlinearity operator}

\begin{definition}
\label{def nonlinearity operator}
The nonlinearity operator 
$N:\mathrm{Diff}_+^k([0,1]) \rightarrow \mathcal{C}^{k-2}([0,1])$ is defined by
\begin{equation}
\label{expression nonlinearity operator}  
N \varphi := D \log D \varphi = \frac{D^2 \varphi}{D \varphi},
\end{equation}
and $N \varphi$ is called the nonlinearity of $\varphi$.
\end{definition}

\begin{remark}
For convenience we use the abbreviated notation
\[
N \varphi = \eta_{\varphi}.
\]
\end{remark}

\begin{lemma}
\label{lemma N is a bijection}
The nonlinearity operator is a bijection.
\end{lemma}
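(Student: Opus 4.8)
The plan is to construct an explicit inverse to the nonlinearity operator $N$ by solving the ordinary differential equation that defines it. Given a prescribed nonlinearity $\eta \in \mathcal{C}^{k-2}([0,1])$, I want to find $\varphi \in \mathrm{Diff}_+^k([0,1])$ with $N\varphi = D\log D\varphi = \eta$. Reading this as an ODE for $D\varphi$, we have $D\log D\varphi = \eta$, so $\log D\varphi(x) = \log D\varphi(0) + \int_0^x \eta(t)\,dt$, hence $D\varphi(x) = D\varphi(0)\,\exp\!\left(\int_0^x \eta(t)\,dt\right)$. Integrating once more and imposing the normalization $\varphi(0)=0$, $\varphi(1)=1$ (which fixes the two constants of integration) gives the candidate formula
\begin{equation}
\label{eq:N-inverse}
\varphi(x) = N^{-1}\eta\,(x) = \frac{\displaystyle\int_0^x \exp\!\left(\int_0^s \eta(t)\,dt\right)ds}{\displaystyle\int_0^1 \exp\!\left(\int_0^s \eta(t)\,dt\right)ds}.
\end{equation}

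First I would check that \eqref{eq:N-inverse} actually lands in $\mathrm{Diff}_+^k([0,1])$: the inner integral $\int_0^s \eta$ is $\mathcal{C}^{k-1}$ since $\eta\in\mathcal{C}^{k-2}$, the exponential of a $\mathcal{C}^{k-1}$ function is $\mathcal{C}^{k-1}$ and strictly positive, and integrating once more yields a $\mathcal{C}^k$ function with strictly positive derivative; the normalization makes it a diffeomorphism of $[0,1]$ fixing the endpoints. Then I would verify the two compositions are the identity. For $N \circ N^{-1} = \mathrm{id}$: compute directly from \eqref{eq:N-inverse} that $D\varphi(x) = c\exp(\int_0^x\eta)$ with $c>0$ the normalizing constant, so $\log D\varphi(x) = \log c + \int_0^x \eta$, and differentiating gives $N\varphi = \eta$. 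For $N^{-1}\circ N = \mathrm{id}$: given $\varphi\in\mathrm{Diff}_+^k$, set $\eta = N\varphi$; then by the same computation $D\varphi(x)/D\varphi(0) = \exp(\int_0^x \eta)$, so plugging into \eqref{eq:N-inverse} the factor $D\varphi(0)$ cancels between numerator and denominator and the normalization forces $N^{-1}\eta = \varphi$ (using $\varphi(0)=0,\varphi(1)=1$).

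I should be slightly careful about the normalization convention: the statement as phrased asserts $N$ is a bijection $\mathrm{Diff}_+^k([0,1]) \to \mathcal{C}^{k-2}([0,1])$, which is only literally true once one fixes that the diffeomorphisms considered are normalized to fix $0$ and $1$ (otherwise $N$ has a two-dimensional kernel coming from post-composition with affine maps, i.e. $N(a\varphi+b)=N\varphi$). The cleanest exposition is to note that $N$ is constant on affine reparametrization classes and to identify $\mathrm{Diff}_+^k([0,1])$ with the normalized representatives, as is standard in \cite{Marco}; with that understood, \eqref{eq:N-inverse} exhibits the explicit two-sided inverse.

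The routine parts are the regularity bookkeeping and the endpoint-normalization cancellations, and the main (very mild) obstacle is simply being careful and consistent about the normalization convention so that the map is genuinely a bijection rather than merely a surjection with affine fibers; once the domain is pinned down correctly, the explicit formula \eqref{eq:N-inverse} does all the work, and in fact this same formula underlies the continuity and Banach-space estimates for the decomposition space used later.
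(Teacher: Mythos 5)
Your proof is correct and takes essentially the same approach as the paper, which simply exhibits the explicit inverse formula $N^{-1}f(x) = \int_0^x e^{\int_0^s f}ds \big/ \int_0^1 e^{\int_0^s f}ds$. One small clarification: the normalization concern you raise in your third paragraph is automatically a non-issue here, since any orientation-preserving diffeomorphism of $[0,1]$ onto itself necessarily fixes both endpoints, so $\mathrm{Diff}_+^k([0,1])$ already consists of normalized representatives with no affine freedom to quotient out.
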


\begin{proof}
The operator $N$ has an explicit inverse given by

\[
N^{-1}f(x) = \frac{\int_0^x e^{\int_0^s f(t)dt}ds}{\int_0^1 e^{\int_0^s f(t)dt}ds},
\]
where $f \in \mathcal{C}^0([0,1])$.
\end{proof}

By Lemma~\ref{lemma N is a bijection}, we can
identify $\mbox{Diff}_+^{3}([0,1])$ with
$\mathcal{C}^{1}([0,1])$ using the nonlinearity operator. It will be convenient to work with the  norm induced on 
$\mbox{Diff}_+^3([0,1])$ by this identification. 
For $\varphi\in\mathrm{Diff}_+^3([0,1])$, we define
\[
||\varphi|| = ||N\varphi||_{\mathcal{C}^1} = ||\eta_{\varphi}||_{\mathcal{C}^1}.
\]

We say that a set $T$ is a {\em time set} if it is at most countable and totally ordered.
Given a time set $T$,
let $X$ denote the {\em space of decomposed diffeomorphisms labelled by $T$}:

\[
X = \{ \underline{\varphi} = \big( \varphi_n \big)_{n \in T}; \; \varphi_n \in \mbox{Diff}_{+}^{3}([0, 1]), \mbox{ and }  \sum ||\varphi_n|| < \infty \}.
\]
The norm of an element $\underline{\varphi} \in X $ is defined by

\[
||\underline{\varphi}|| = \displaystyle \sum_{n\in T} ||\varphi_n||.
\]

Given two time sets $T_1$ and $T_2$, we define
$$T_2\oplus T_1=\{(x,i):x\in T_i,i=1,2\},$$
where $(x,i)<(y,i)$ if and only if $x<y,$
and $(x,2)>(y,1)$ for all $x\in T_2,$ $y\in T_1.$

To simplify the following discussion, assume that
$T=\{1,2,3,\dots,n\}$ or $T=\mathbb N$.
We define the partial composition by

\begin{equation}
\begin{array}{ccccl}
  O_n & : & X & \rightarrow & \mbox{Diff}_{+}^{2}([0, 1]) \\
     & & \underline{\varphi} & \mapsto & O_n \underline{\varphi} := \varphi_n \circ \varphi_{n-1} \circ \ldots \circ \varphi_1 \\
\end{array}    
\end{equation}
and the complete composition is given by the limit
\begin{equation}
\label{operator: composition of a decomposition}
\displaystyle O \underline{\varphi} = \lim_{n \rightarrow \infty} O_n \underline{\varphi}    
\end{equation}
which allow us to define the operator

\begin{equation}
\label{complete composition operator}
\begin{array}{ccccl}
  O & : & X & \rightarrow & \mbox{Diff}_{+}^{2}([0, 1]) \\
     & & \underline{\varphi} & \mapsto & O \underline{\varphi}:= \displaystyle \lim_{n \rightarrow \infty } O_n \underline{\varphi}. \\
\end{array}    
\end{equation}
The existence of the limit (\ref{operator: composition of a decomposition}) is assured by the Sandwich Lemma from \cite{Marco}. 

\subsection{The decomposition space for dissipative gap mappings}

We define the decomposition space of dissipative gap maps, $\underline{\mathcal{D}}$, by
\[
\underline{\mathcal{D}} = (0,1)^3 \times X \times X.
\]
The composition operator defined at (\ref{complete composition operator}) gives a way to project the space $\underline{\mathcal{D}}$ to the space $(0,1)^3 \times \mbox{Diff}_{+}^{2}([0, 1]) \times \mbox{Diff}_{+}^{2}([0, 1])$. More precisely

\begin{equation}
\begin{array}{ccccl}
   \Xi  & : & \underline{\mathcal{D}} & \rightarrow &  (0,1)^3 \times \mbox{Diff}_{+}^{2}([0, 1]) \times \mbox{Diff}_{+}^{2}([0, 1]) \\
  & & (\alpha, \beta, b, \underline{\varphi}_L, \underline{\varphi}_R) & \mapsto & \Xi (\alpha, \beta, b, \underline{\varphi}_L, \underline{\varphi}_R) : = (\alpha, \beta, b, O \underline{\varphi}_L, O \underline{\varphi}_R). \\
\end{array}    
\end{equation}

\subsection{Renormalization on $\underline{\mathcal{D}}$}

Let $I=[a, b] \subset [0, 1]$ and let $1_I:[0,1] \rightarrow [a,b]$ be the affine map

\[
1_I(x)=|I|x+a = (b-a)x+a
\]
which has the inverse $1_I^{-1}:[a, b] \rightarrow [0,1]$ given by

\[
1_I^{-1}(x)= \frac{x-a}{|I|} = \frac{x-a}{b-a}.
\]
It is known that the zoom operator $\varsigma_I : \mathcal{C}^1([0,1]) \rightarrow \mathcal{C}^1([0,1])$ is defined by 

\begin{equation}
\varsigma_I \varphi (x) = 1_{\varphi (I)}^{-1} \circ \varphi \circ 1_{I}(x).
\end{equation}
Observe that the nonlinearity operator satisfies 

\[
N(\varsigma_I \varphi) = |I| \cdot N \varphi \circ 1_I.
\]
 Thus we define the zoom operator $Z_I: \mathcal{C}^1([0,1]) \rightarrow \mathcal{C}^1([0,1])$ acting on a nonlinearity by

\begin{equation}
\label{zoom operator law}
Z_I \eta (x) = |I| \cdot \eta \circ 1_I(x),     
\end{equation}
and if $\varphi$ is a $\mathcal C^2$ diffeomorphism we define $Z_I\varphi$ by

\begin{equation*}
\begin{array}{cclll}
     Z_I & : & \mbox{Diff}_{+}^{r}([0, 1]) & \rightarrow & \mathcal{C}^{r-2}([0,1]) \\
     & & \varphi & \mapsto & Z_I \varphi (x) = |I| \cdot \eta_{\varphi} \circ  1_I(x) \\
\end{array}    
\end{equation*}
where $\eta_{\varphi}=N\varphi.$ 

\smallskip

It will be convenient to introduce a different set of coordinates on the space of gap mappings.
We denote by $\Sigma$ the unit cube
\[
\Sigma = (0, 1)^3 = \{ (\alpha, \beta, b ) \in \mathbb{R}^3 \; | \; 0 < \alpha, \beta, b < 1 \},
\]
by $\mbox{Diff}_+^3([0,1])^2$ the set

\[
\{ (\varphi_L, \varphi_R) \; | \; \varphi_L, \; \varphi_R : [0, 1] \rightarrow [0, 1] \; \mbox{are orientation preserving } \mathcal{C}^3- \mbox{diffeomorphisms}\}
\]
and by

\[
\mathcal D' = \Sigma \times \mbox{Diff}_+^3([0, 1])^2.
\]

%Note that $\Xi$ is an operator from $\underline{\mathcal{D}}$ to $M$.

We define a change of coordinates from $\mathcal D'$ to $\mathcal{D}$ by:

\begin{equation}
\begin{array}{llccl}
    \Theta & : & \mathcal D' & \rightarrow & \mathcal{D}  \\
     & & (\alpha, \beta, b, \varphi_L, \varphi_R )& \mapsto & \Theta( \alpha, \beta, b, \varphi_L, \varphi_R )=\colon f \\
\end{array}    
\end{equation}
where $f:[b-1, b] \setminus \{0\} \rightarrow [b-1,b]$ is defined by

\begin{equation}
\label{definition of a gap map f in M}
f(x) = \left\{ \begin{array}{lcl}
f_L(x)    & ,& x \in [b-1,0) \\
f_R(x)  & ,& x \in (0,b]
\end{array}    \right.
\end{equation}
with 

\begin{equation}
\label{definition of f_L}
\begin{array}{llccl}
    f_L & : & I_{0,L}=[b-1,0]  & \rightarrow & T_{0,L}=[\alpha (b-1)+b, b]  \\
     & & x & \mapsto & f_L(x) = 1_{T_{0,L}} \circ \varphi_L \circ 1_{I_{0,L}}^{-1}(x) \\
\end{array}    
\end{equation}
and 

\begin{equation}
\label{definition of f_R} 
\begin{array}{llccl}
    f_R & : & I_{0,R}=[0, b]  & \rightarrow &  T_{0,R}=[b-1, \beta b + b-1] \\
     & & x & \mapsto & f_R(x) = 1_{T_{0,R}} \circ \varphi_R \circ 1_{I_{0,R}}^{-1}(x). \\
\end{array}    
\end{equation}
Note that $f_L$ and $f_R$ are differentiable and strictly increasing functions such that $0 < f_L '(x) \leq \nu < 1$, for all $x\in [b-1,0]$, and $0 < f_R '(x) \leq \nu < 1$, for all $x\in [0, b]$, where $\nu$ is a positive real number and less than $1$ depending on $f$, i.e. $\nu = \nu_f \in (0,1)$. The functions $\varphi_L$ and $\varphi_R$ are called the {\em diffeomorphic parts} of $f$. See Figure~\ref{fig: slopes and branches of f}.

\begin{figure}
\begin{center}
\begin{tikzpicture}
\path[draw, line width=0.3mm, top color=white!40, bottom color=white!40] (0, 0.0)  -- (8, 0.0)  -- (8, 8)  -- (0,8)  -- (0,0) --cycle;

\path[draw, line width=0.3mm,top color=white!40,bottom
color=white!40] (0,0)  --  (8,8) --cycle;

\path[draw, line width=0.3mm,top color=white!40,bottom
color=white!40] (6, 0)  --  (6, 8) --cycle;

\path[draw, line width=0.3mm,top color=white!40,bottom
color=white!40] (0, 6)  --  (8,6) --cycle;

\draw [cyan] plot [smooth] coordinates {(0,2.8) (4,5.4) (6,8)};

\draw [cyan] plot [smooth] coordinates {(6,0) (6.5,0.45) (7,0.8) (7.5,1) (8,1.45) };

\draw[black] (6,-0.5) node {$0$};
\draw[black] (0.3,-0.5) node {$b-1$};
\draw[black] (8,-0.5) node {$b$};

\draw[black] (-0.5,6) node {$0$};
\draw[black] (-0.7,0.1) node {$b-1$};
\draw[black] (-0.5,8) node {$b$};

\draw[black] (-1,2.8) node {$f_L(b-1)$};
\draw[black] (8.6,1.45) node {$f_R(b)$};

\draw[black] (-3,4.8) node {$\alpha = \displaystyle \frac{b-f_L(b-1)}{0-(b-1)}$};
\draw[black] (10.6,0.45) node {$\beta = \displaystyle \frac{f_R(b)-(b-1)}{b-0}$};

\draw [dashed] (0,2.8) -- (2.8,2.8);
\draw [dashed] (1.45,1.45) -- (8,1.45);
\draw [dashed,blue] (1.45,0) -- (1.45,1.45);
\draw [dashed,blue] (2.8,0) -- (2.8,2.8);

\draw[black] (6.6,1.1) node {$f_R$};
\draw[black] (2.5,5) node {$f_L$};

\draw[(-, ultra thick, blue] (1.45,0) -- (1.46,0);
\draw[-), ultra thick, blue] (2.7,0) -- (2.8,0);
\draw[blue] (1.46,0) -- (2.7,0);

\draw[blue] (2.1,-0.5) node {$G$};
%\draw[black] (9,6) node {$f$};

\end{tikzpicture}
\end{center}
    \caption{Branches $f_L$ and $f_R$, slopes $\alpha$ and $\beta$ of a gap map $f$.}
    \label{fig: slopes and branches of f}
\end{figure}
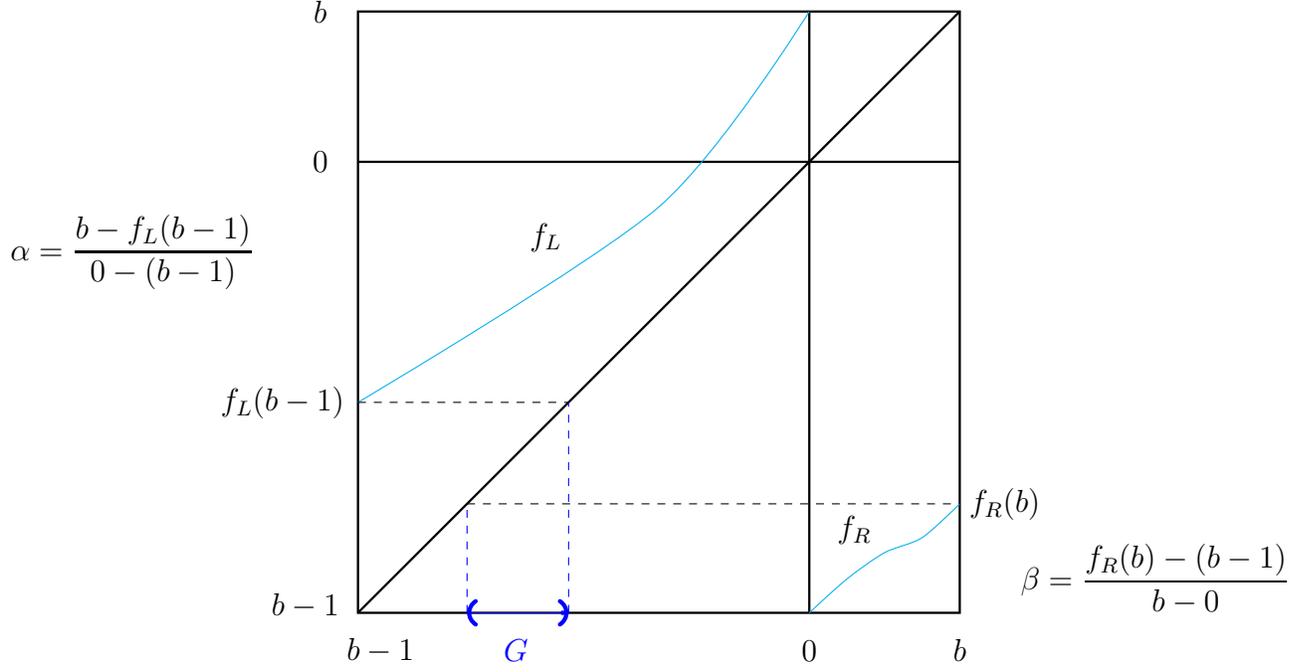

\begin{remark}
Depending on the properties of a gap mapping that we wish to emphasize, we can express a gap mapping $f$ in either coordinate system:
$f=(f_L, f_R, b)$ or $f=(\alpha,\beta, b,\varphi_L,\varphi_R),$ and we will move freely between the two coordinate systems. 
\end{remark}

%We say that a point $(\alpha, \beta, b, \varphi_L, \varphi_R ) \in M$ is renormalizable if the corresponding dissipative gap map $f=F(\alpha, \beta, b, \varphi_L, \varphi_R )$ defined in (\ref{definition of a gap map f in M}) is renormalizable. 
\begin{comment}
Let
\begin{equation}
\label{renormalizable points}
\mathcal D_0=\{ f\in\mathcal D: f \mbox{ is renormalizable}\},
\end{equation}
the subset of $M$ formed by the points $X$ in $M$ whose image $F(X) \in \mathcal{D}$ is a renormalizable dissipative gap map in the sense of Definition~\ref{renormalizable gap map}. 
Hence, if $X \in M_0$ we can define the {\it first return point} $Y=(\tilde{\alpha}, \tilde{\beta}, \tilde{b}, \tilde{\varphi}_L, \tilde{\varphi}_R ) $ of $X$ as $Y=F^{-1}(\mathcal{R}f)$, where $f=F(X)$ and the coordinates of $Y$ are given by
\end{comment}

Let $\mathcal D_0$ denote the set of once renormalizable gap mappings. 
If $f=(\alpha,\beta,b,\varphi_L,\varphi_R)\in\mathcal D_0$, we let $\tilde f=\mathcal R f=(\tilde \alpha,\tilde \beta, \tilde b,\tilde\varphi_L,\tilde\varphi_R)$ denote its renormalization. When $\sigma_f=-$, we have the following expressions for the coordinates of $\tilde f:$ 
\begin{equation}
\label{renormalized coordinates case -}
\begin{array}{ccl}
\tilde{\alpha} & = & \displaystyle  \frac{f_L^k \circ f_R \circ f_L(0_{k+1}^{+})-0_{k+2}^{-}}{0_{k+1}^{+}} \\
\tilde{\beta}  & = &  \displaystyle \frac{f_L^k \circ f_R(0_{k+2}^-)-0_{k+1}^+}{0_{k+2}^-} \\
\tilde{b}      & = &  \displaystyle \frac{0_{k+2}^-}{|[0_{k+1}^+,0_{k+2}^-]|}\\
& & \\
\tilde{\varphi}_L & = & \displaystyle \varsigma_{[0_{k+1}^+,0]} \tilde{f}_L , \; \mbox{ with } \tilde{f}_L=f_L^k \circ f_R \circ f_L \\
& & \\
\tilde{\varphi}_R & = & \displaystyle \varsigma_{[0, 0_{k+2}^-]} \tilde{f}_R, \; \mbox{ with } \tilde{f}_R=f_L^k \circ f_R.
\end{array}
\end{equation}
We have 
similar expressions when $\sigma_f=+,$
which we omit.

To express 
$\tilde{\underline{f}}\in\underline{\mathcal{D}},$
we write $\tilde{\underline{f}}=(\tilde \alpha,\tilde \beta, \tilde b,\tilde{\underline{\varphi}}_L,\tilde{\underline{\varphi}}_R)$, where 
$\tilde \alpha,\tilde \beta$ and $\tilde b$ are as in formula (\ref{renormalized coordinates case -}),
and 
$ \tilde{\underline{\varphi}}_L$ and $\tilde{\underline{\varphi}}_R,$
are defined by:
$$\displaystyle
\tilde{\underline{\varphi}}_L=
\zeta_{U_{k+2}}\underline f_L \oplus \zeta_{U_{k+1}}\underline f_L
\oplus\dots \zeta_{U_{2}}\underline f_L\oplus
\zeta_{U_{1}}\underline f_R\oplus\zeta_{U_{0}}\underline f_L,
\mbox{ and}
$$
$$
\displaystyle
\tilde{\underline{\varphi}}_R=
\zeta_{V_{k+1}}\underline f_L \oplus \zeta_{V_{k}}\underline f_L
\oplus\dots \zeta_{V_{1}}\underline f_L\oplus
\zeta_{V_{0}}\underline f_R,
$$
where $\underline{f}_L$ and $\underline{f}_R$ are decompositions over a singleton timeset,
$U_0=(0^+_{k+1},0),$ $U_i=f^i(U_0)$
for $0<i\leq k+2$,
$V_0=(0,0^-_{k+2}),$ and $V_i=f^i(V_0)$
for $0<i\leq k+1.$ One immediately sees that after composing the decomposed mappings we obtain $\tilde f.$

\begin{comment}
\begin{equation}
%\label{renormalized coordinates case +}
\begin{array}{ccl}
%\tilde{\alpha} & = & \displaystyle  \frac{f_R^k %\circ f_L(0_{k+2}^{+})-0_{k+1}^{-}}{0_{k+2}^{+}} %\\
%\tilde{\beta}  & = &  \displaystyle \frac{f_R^k %\circ f_L \circ %f_R(0_{k+1}^-)-0_{k+2}^+}{0_{k+1}^-} \\
%\tilde{b}      & = &  \displaystyle \frac{0_{k+1}^-}{|[0_{k+2}^+,0_{k+1}^-]|}\\
& & \\
\tilde{\varphi}_L & = & \displaystyle \varsigma_{[0_{k+2}^+,0]} \tilde{f}_L , \; \mbox{ with } \tilde{f}_L=f_R^k \circ f_L \\
& & \\
\tilde{\varphi}_R & = & \displaystyle \varsigma_{[0, 0_{k+1}^-]} \tilde{f}_R, \; \mbox{ with } \tilde{f}_R=f_R^k \circ f_L \circ f_R
\end{array}
\end{equation}
in the case of $\sigma_f=+$. 
\end{comment}

As we will use the structure of Banach space in $\mbox{Diff}_{+}^{3}([0, 1])$ given by the nonlinearity operator we need the expressions for the coordinates functions $\tilde{\varphi}_L$ and $\tilde{\varphi}_R$ in terms of the zoom operator. Note that the coordinates $\tilde{\alpha}$, $\tilde{\beta}$ and $\tilde{b}$ remain the same as in (\ref{renormalized coordinates case -}) %and  %(\ref{renormalized coordinates case +}) 
since they are not affected by the zoom operator. In order to obtain these coordinate functions we need to apply the zoom operator to each branch of the first return map $R$ on the interval $I'= [0_{k+1}^{+}, 0_{k+2}^{-}]$, \label{page: I'}
in case $\sigma_f=-$, or on the interval $I'= [0_{k+2}^{+}, 0_{k+1}^{-}]$, in case $\sigma_f=+$. Then, when $\sigma_f=-$, we obtain
\begin{equation}
\label{tilde eta_L and eta_R}
\begin{array}{ccl}
\tilde{\eta}_L     & = & = Z_{[0_{k+1}^{+}, 0]} \eta_{\tilde{f}_L} = |0_{k+1}^{+}| \cdot \eta_{\tilde{f}_L} \circ 1_{[0_{k+1}^{+}, 0]}^{-1} = 
|0_{k+1}^{+}| \cdot N(f_L^k \circ f_R \circ f_L) \circ 1_{[0_{k+1}^{+}, 0]}^{-1}\\
& & \\
\tilde{\eta}_R     & = & = Z_{[0, 0_{k+2}^{-}]} \eta_{\tilde{f}_R} = 
|0_{k+2}^{-}| \cdot \eta_{\tilde{f}_R} \circ 1_{[0, 0_{k+2}^{-}]}^{-1} =
|0_{k+2}^{-}| \cdot N(f_L^k \circ f_R) \circ 1_{[0, 0_{k+2}^{-}]}^{-1}, \\
\end{array}    
\end{equation}
The formulas when $\sigma=+$ are similar, and to save space we do not include them.

\begin{comment}
\begin{equation}
\label{tilde eta_L and eta_R +}
\begin{array}{ccl}
\tilde{\eta}_L     & = & = Z_{[0_{k+2}^{+}, 0]} \eta_{\tilde{f}_L} = |0_{k+2}^{+}| \cdot \eta_{\tilde{f}_L} \circ 1_{[0_{k+2}^{+}, 0]}^{-1} = 
|0_{k+2}^{+}| \cdot N(f_R^k \circ f_L) \circ 1_{[0_{k+2}^{+}, 0]}^{-1}\\
& & \\
\tilde{\eta}_R     & = & = Z_{[0, 0_{k+1}^{-}]} \eta_{\tilde{f}_R} = 
|0_{k+1}^{-}| \cdot \eta_{\tilde{f}_R} \circ 1_{[0, 0_{k+1}^{-}]}^{-1} =
|0_{k+1}^{-}| \cdot N(f_R^k \circ f_L \circ f_R) \circ 1_{[0, 0_{k+1}^{-}]}^{-1}, \\
\end{array}    
\end{equation}
when $\sigma_f=+$.
\end{comment}

\begin{comment}
Furthermore, there exists a subset $\underline{\mathcal{D}}_0 \subset \underline{\mathcal{D}}$ formed by all the points $(\alpha, \beta, b, \underline{\varphi}_L, \underline{\varphi}_R)$ such that their images by $\Xi$ belong to $M_0$. Thus, in our context the renormalization operator $\underline{\mathcal{R}}$ is the operator which associates to any renormalizable point $X = \Xi (\alpha, \beta, b, \underline{\varphi}_L, \underline{\varphi}_R) \in M_0$ its first return point $Y = F^{-1}(\mathcal{R}f) \in M$.
\end{comment}

\begin{remark}
We would like to stress that throughout the remainder of this paper we will make use of the Banach space structure on $\mbox{\em Diff}_{+}^{3}([0, 1])$ given by 
its identification with $\mathcal C^1([0,1])$ via the nonlinearity operator. 
%By this we mean that at most of the time we will use the nonlinearity $\eta_{\varphi}$ instead of the diffeomorphism $\varphi$. For instance, we always will compute the partial derivatives of a coordinate of $Y$ given at (\ref{tilde eta_L and eta_R}) and (\ref{tilde eta_L and eta_R +}) with respect to $\eta_L$ (or $\eta_R$) instead of with respect to $\varphi_L$ (or $\varphi_R$). 
\end{remark}

\section{The derivative of the renormalization operator}\label{sec:derivative}
In this section we will estimate the derivative of the renormalization operator acting on an absorbing set under renormalization in the decomposition space of dissipative gap mappings. A little care is needed since the operator is not differentiable.

Recall that $\mathcal D_0\subset\mathcal C^3$, is the 
set of once renormalizable gap dissipative gap mappings. Then
$\mathcal R:\mathcal D_0\to\mathcal C^2$ is differentiable, and the derivative $D\mathcal R_f:\mathcal C^3 \to \mathcal C^2$ extends to a bounded operator
$D\mathcal R_f:\mathcal C^2\to\mathcal C^2,$
which depends continuously on $f\in\mathcal C^3.$ In \cite{MP}, 
$\mathcal R$ is called {\em jump-out differentiable}.

%\begin{theorem}
\label{derivative of renormalization operator}
If $\underline{f}=(\alpha, \beta, b, \underline{\varphi}_L, \underline{\varphi}_R) \in \underline{\mathcal D}_0$ the derivative of $\underline{\mathcal{R}}_{\underline{f}}$, $D\underline{\mathcal{R}}_{\underline{f}}$ is a matrix of the form

\begin{equation}
\label{derivative matrix}
D \underline{\mathcal{R}}_{\underline{f}} = \left( \begin{array}{cc}
A_{\underline{f}}     &  B_{\underline{f}} \\
C_{\underline{f}}     &  D_{\underline{f}}
\end{array}     \right)
\end{equation}
where

\begin{enumerate}
    \item[.] $A_{\underline{f}}: \mathbb{R}^3 \rightarrow \mathbb{R}^3$
    \item[.] $B_{\underline{f}}: X \times X \rightarrow \mathbb{R}^3$
    \item[.] $C_{\underline{f}}: \mathbb{R}^3 \rightarrow X \times X$
    \item[.] $D_{\underline{f}}: X \times X \rightarrow X \times X.$
\end{enumerate}
%\end{theorem}
We estimate $A_{\underline{f}}$ in Lemma~\ref{lemma partial derivatives of abb tilde respec to abb},
$B_{\underline{f}}$ in Lemma~\ref{lemma entries of B_f matrix},
$C_{\underline{f}}$ in Lemma~\ref{lemma for Cmatrix}
and $D_{\underline{f}}$ in Lemma~\ref{lemma entries of D matrix}.

In order to estimate the entries of matrices $A_{\underline{f}}$, $B_{\underline{f}}$, $C_{\underline{f}}$ and $D_{\underline{f}}$ we will make use of the partial derivative operator $\partial$. The main properties of $\partial$ are presented in the next lemma.

\begin{lemma}\cite[Lemma 9.4]{MarcoWinclker}
\label{lemma partial operator}
The following equations hold whenever they make sense:

\begin{equation}
\label{derivative of f o g}
\partial (f \circ g)(x) =   \partial f(g(x))+f'(g(x)) \partial g(x)  ,
\end{equation}
\begin{equation}
\label{derivative of fn+1}
\partial (f^{n+1})(x) =   \sum_{i=0}^{n}Df^{n-i}(f^{i+1}(x)) \partial f(f^{i}(x)), 
\end{equation}
\begin{equation}
\partial (f^{-1})(x) = - \frac{\partial f(f^{-1}(x))}{f'(f^{-1}(x))} ,
\end{equation}
\begin{equation}
\partial (f \cdot g)(x) =   \partial f(x)g(x) +f(x)\partial g(x) , 
\end{equation}
\begin{equation}
\partial (f/g)(x) =  \frac{\partial f(x)g(x)-f(x) \partial g(x)}{(g(x))^2}.  
\end{equation}
\end{lemma}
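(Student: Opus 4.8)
The plan is to read $\partial$ as the variational derivative along a one-parameter family: fix a differentiable curve $t\mapsto f_t$ (resp.\ $t\mapsto g_t$) of maps with $f_0=f$, $g_0=g$, and put $\partial f(x)=\frac{\partial}{\partial t}\big|_{t=0}f_t(x)$ and $\partial g(x)=\frac{\partial}{\partial t}\big|_{t=0}g_t(x)$. With this reading all five formulas are instances of elementary calculus applied to the two-variable maps $(t,x)\mapsto f_t(x)$ and $(t,x)\mapsto g_t(x)$; the only point to keep in mind is that, as the statement says, the maps involved must actually be composable/invertible on the interval under consideration and regular enough ($\mathcal C^1$ in $x$, differentiable in $t$) for the mixed partials to be interchanged.

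First I would establish the chain rule $(\ref{derivative of f o g})$. Differentiating $t\mapsto f_t(g_t(x))$ at $t=0$ by the multivariable chain rule produces two terms: $\partial f$ evaluated at $g(x)$, and $f'(g(x))$ times $\partial g(x)$, which is exactly $(\ref{derivative of f o g})$. The product and quotient formulas are then immediate, being the Leibniz and quotient rules for $\frac{\partial}{\partial t}$ applied pointwise in $x$. For the inverse formula, differentiate the identity $f_t\!\left(f_t^{-1}(x)\right)=x$ in $t$, applying the chain rule $(\ref{derivative of f o g})$ to the left-hand side; this gives $\partial f(f^{-1}(x))+f'(f^{-1}(x))\,\partial(f^{-1})(x)=0$, and solving for $\partial(f^{-1})(x)$ yields the claim. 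Finally, $(\ref{derivative of fn+1})$ follows by induction on $n$: writing $f^{n+1}=f\circ f^{n}$ and using $(\ref{derivative of f o g})$ gives $\partial(f^{n+1})(x)=\partial f(f^{n}(x))+f'(f^{n}(x))\,\partial(f^{n})(x)$; plugging in the inductive expression for $\partial(f^{n})$ and using the cocycle identity $Df^{m}(y)=\prod_{j=0}^{m-1}Df(f^{j}(y))$ in the form $f'(f^{n}(x))\cdot Df^{\,n-1-i}(f^{i+1}(x))=Df^{\,n-i}(f^{i+1}(x))$ telescopes everything into the single sum $\sum_{i=0}^{n}Df^{\,n-i}(f^{i+1}(x))\,\partial f(f^{i}(x))$.

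Since this is precisely Lemma 9.4 of \cite{MarcoWinclker}, I do not anticipate any genuine difficulty: the content is purely formal bookkeeping. The mild caveat is the hypothesis ``whenever they make sense'' — one must check in each application that the branches being composed have matching domains and ranges and that inverses are taken only where defined — but in the present section $\partial$ is applied only to finite compositions of the diffeomorphic branches $f_L$ and $f_R$, where all of this is automatic.
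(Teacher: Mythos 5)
Your proof is correct, and your reading of $\partial$ as the variation along a one-parameter family of maps (equivalently, a directional derivative in the parameter of the gap mapping, which is exactly how it is applied later with $* \in \{\alpha,\beta,b,\eta_L,\eta_R\}$) is the right one. The paper itself does not prove this lemma; it cites \cite[Lemma 9.4]{MarcoWinclker}, so there is no internal proof to compare against. Your derivation — the multivariable chain rule for $(\ref{derivative of f o g})$, pointwise Leibniz and quotient rules for the last two identities, differentiation of $f_t\circ f_t^{-1}=\mathrm{id}$ for the inverse, and induction with the cocycle identity $f'(f^{n}(x))\,Df^{\,n-1-i}(f^{i+1}(x))=Df^{\,n-i}(f^{i+1}(x))$ for $(\ref{derivative of fn+1})$ — is the standard argument and matches what one would expect the cited source to do. No gaps.
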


From now on we will make use of the notation

\[
g(x) \asymp y
\]
to mean that there exists a positive constant $K < \infty$ not depending on $g$ such that $K^{-1}y \leq g(x) \leq K y$, for all $x$ in the domain of $g$.

\smallskip

Recall that the inverse of the nonlinearity operator $N: \mbox{Diff}_+^3([0,1]) \rightarrow \mathcal{C}^1([0,1])$ is 
given by
%a bijection 
%(see Lemma~\ref{lemma N is a bijection}), given %a nonlinearity $\eta \in \mathcal{C}^1([0,1])$, %and denoting the corresponding diffeomorphism by
\begin{equation}
\label{expression of phi_eta}    
\varphi (x) = \varphi_{\eta}(x)=N^{-1} \eta (x) = \frac{\int_0^x e^{\int_0^s \eta (t)dt}ds}{\int_0^1 e^{\int_0^s \eta (t)dt}ds},
\end{equation}
where $\eta \in \mathcal{C}^1([0,1]).$

\begin{lemma}
\label{lemma d phi d eta}
Let $x \in [0, 1]$. The evaluation operator $E: \mbox{\em Diff}_{+}^2([0,1]) = \mathcal{C}^0([0,1]) \rightarrow \mathbb{R}$

\[
E: \eta \mapsto \varphi_{\eta}(x)
\]
is differentiable with derivative $\displaystyle \frac{\partial \varphi (x)}{\partial \eta}: \mathcal{C}^0([0,1]) \rightarrow \mathbb{R}$ given by

\begin{equation}
\label{formula for d phi d eta}
\displaystyle \frac{\partial \varphi (x)}{\partial \eta}(\Delta \eta ) = \left( \displaystyle \frac{\int_{0}^{x} \left[ \int_{0}^{s} \Delta \eta \right] e^{\int_{0}^{s} \eta }ds}{ \int_{0}^{x}e^{\int_{0}^{s}\eta} ds}
- \displaystyle \frac{\int_{0}^{1} \left[ \int_{0}^{s} \Delta \eta \right] e^{\int_{0}^{s} \eta }ds}{ \int_{0}^{1}e^{\int_{0}^{s}\eta} ds}\right) \varphi (x).
\end{equation}
There exists $\varepsilon_0>0$ so that for all $\varepsilon\in(0,\varepsilon_0),$ if $\| D^2\varphi \|_{\mathcal Cˆ0} < \varepsilon,$ 
we have that
\begin{equation}
\label{estimate for d phi d eta}
\frac{1}{8}\min\{ \varphi (x), 1-\varphi (x)\}\leq \left| \displaystyle \frac{\partial \varphi (x)}{\partial \eta}  \right|  \leq 2 \min\{ \varphi (x), 1-\varphi (x)\}.   
\end{equation}
\end{lemma}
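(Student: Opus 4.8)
The statement has two parts: an exact formula (\ref{formula for d phi d eta}) for the Gateaux derivative of $\eta\mapsto\varphi_\eta(x)$, and the two-sided estimate (\ref{estimate for d phi d eta}) valid when the second derivative of $\varphi$ is small. For the formula, I would start from the explicit expression (\ref{expression of phi_eta}) for $\varphi_\eta=N^{-1}\eta$ and differentiate directly in the direction $\Delta\eta$. Writing $\varphi_\eta(x)=P(x)/P(1)$ with $P(x)=\int_0^x e^{\int_0^s\eta(t)\,dt}\,ds$, the chain rule gives $\frac{\partial}{\partial\eta}\!\left[\int_0^s\eta\right](\Delta\eta)=\int_0^s\Delta\eta$, hence $\frac{\partial}{\partial\eta}e^{\int_0^s\eta}(\Delta\eta)=\left(\int_0^s\Delta\eta\right)e^{\int_0^s\eta}$, and therefore $\frac{\partial P(x)}{\partial\eta}(\Delta\eta)=\int_0^x\left(\int_0^s\Delta\eta\right)e^{\int_0^s\eta}\,ds$. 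Applying the quotient rule to $P(x)/P(1)$ and factoring out $\varphi(x)=P(x)/P(1)$ then yields exactly (\ref{formula for d phi d eta}); one should also remark that this linear functional is bounded on $\mathcal C^0$, so the operator is genuinely (Fréchet) differentiable, justifying calling $E$ differentiable. I would note $\Delta\eta$ ranges over $\mathcal C^0$ and the inner integrals are all well-defined and continuous in $s$.

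For the estimate, the key observation is that when $\|D^2\varphi\|_{\mathcal C^0}<\varepsilon$ is small, $\eta_\varphi=N\varphi=D^2\varphi/D\varphi$ is small in $\mathcal C^0$ — indeed, since $\varphi\in\mathrm{Diff}_+^2([0,1])$ fixes $0$ and $1$, one has $\int_0^1 D\varphi=1$, so $D\varphi$ cannot be uniformly tiny, and a short argument bounds $\|\eta\|_{\mathcal C^0}$ by a constant times $\varepsilon$. Consequently $e^{\int_0^s\eta}$ is uniformly close to $1$: there is $\rho=\rho(\varepsilon)\to 1$ as $\varepsilon\to0$ with $\rho^{-1}\le e^{\int_0^s\eta(t)\,dt}\le\rho$ for all $s\in[0,1]$. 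The plan is then to estimate the bracketed difference in (\ref{formula for d phi d eta}) against $\min\{\varphi(x),1-\varphi(x)\}$. The natural normalization is to test with $\Delta\eta$ of unit $\mathcal C^0$-norm; the worst case (largest absolute value) is achieved, up to the factor $\rho^{\pm}$, by $\Delta\eta\equiv\pm1$, for which $\int_0^s\Delta\eta=\pm s$ and the two integrals become comparable to $\int_0^x s\,e^{\int_0^s\eta}\,ds\big/\int_0^x e^{\int_0^s\eta}\,ds$ and the analogous expression over $[0,1]$. Using the near-constancy of the exponential, the first ratio is within a factor $\rho^2$ of $\frac{1}{x}\int_0^x s\,ds\big/1 = x/2$ — wait, more precisely of $x/2$ — and the second is within $\rho^2$ of $1/2$; so the bracket is within bounded multiplicative error of $\frac{1}{2}(x-1)$, i.e. comparable to $(1-x)$, and after multiplying by $\varphi(x)$ (which, since $\eta$ is small, satisfies $\varphi(x)\asymp x$) one gets comparability with $x(1-x)\asymp\min\{x,1-x\}\asymp\min\{\varphi(x),1-\varphi(x)\}$. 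Choosing $\varepsilon_0$ small enough that all the $\rho$-factors, together with the constants relating $\varphi(x)$ to $x$, fit inside the window $[\tfrac18,2]$ gives the claimed bounds.

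I would organize the estimate as follows: (i) bound $\|\eta_\varphi\|_{\mathcal C^0}\le C\varepsilon$; (ii) deduce $|e^{\int_0^s\eta}-1|\le C'\varepsilon$ uniformly and $|\varphi(x)-x|\le C''\varepsilon$ (or the multiplicative version $\varphi(x)\asymp x$, $1-\varphi(x)\asymp 1-x$); (iii) for the upper bound, estimate $\left|\int_0^x(\int_0^s\Delta\eta)e^{\int_0^s\eta}ds\right|\le\|\Delta\eta\|_{\mathcal C^0}\int_0^x s\,e^{\int_0^s\eta}ds\le(1+C'\varepsilon)\|\Delta\eta\|_{\mathcal C^0}\,\tfrac{x^2}{2}$ and a matching lower bound for the denominator $\int_0^x e^{\int_0^s\eta}ds\ge(1-C'\varepsilon)x$, and similarly over $[0,1]$, then combine; (iv) for the lower bound, test against the specific direction $\Delta\eta\equiv\mathrm{sgn}$ of the quantity, i.e. choose $\Delta\eta\equiv 1$ or $\Delta\eta\equiv -1$ so that the two ratios do not cancel, and run the same estimates from below. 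The main obstacle I anticipate is step (iv): showing the lower bound requires ruling out cancellation between the two ratios in the bracket, which is why a concrete choice of $\Delta\eta$ (rather than an abstract supremum) is needed, and one must check that for $\Delta\eta\equiv 1$ the difference $\frac{x}{2}-\frac{1}{2}+O(\varepsilon)$ indeed has size $\asymp(1-x)$ — this degenerates as $x\to 1$, which is exactly why the bound is stated in terms of $\min\{\varphi(x),1-\varphi(x)\}$ and why one must track the $(1-x)$ factor carefully rather than crudely bounding below by a constant. A symmetric remark handles $x\to 0$. Everything else is routine once the smallness of $\eta$ is in hand.
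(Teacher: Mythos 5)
Your plan for the differentiability and the formula (\ref{formula for d phi d eta}) follows the paper exactly (differentiate the explicit expression for $\varphi_\eta = N^{-1}\eta$), and your observation that $\|D^2\varphi\|_{\mathcal C^0}<\varepsilon$ together with $\int_0^1 D\varphi = 1$ forces $\|\eta\|_{\mathcal C^0}\lesssim\varepsilon$ is a genuine improvement over what the paper writes (the paper only asserts that $\varphi$ is $\mathcal C^0$-close to the identity, which is weaker than what is actually needed). The difficulty is in how you propose to pass from the near-constancy of $e^{\int_0^s\eta}$ to the two-sided multiplicative bound (\ref{estimate for d phi d eta}).

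The step that fails is the inference ``the first ratio is within a factor $\rho^2$ of $x/2$, the second within $\rho^2$ of $1/2$, so the bracket is within bounded multiplicative error of $\tfrac12(x-1)$.'' If each ratio is only controlled up to a multiplicative factor $\rho^{\pm 2}$ with $\rho = 1 + O(\varepsilon)$, then the \emph{difference} of the two ratios carries an additive error of order $\varepsilon$, while the target value $\tfrac12(x-1)$ has modulus $(1-x)/2$, which can be far smaller than $\varepsilon$ when $x$ is close to $1$ (and symmetrically near $x=0$); the interval arithmetic you describe in steps (iii)--(iv) therefore does not rule out the bracket changing sign or vanishing. You correctly flag this as ``the main obstacle,'' but the resolution you gesture at (choose a concrete $\Delta\eta$, track the $(1-x)$ carefully) does not fix it, because the cancellation is between the two ratios, not between contributions within one ratio, and it happens for every $\Delta\eta$. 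What rescues the paper's argument at this point is the integration-by-parts identity (\ref{integral of product}): writing $\int_0^x(\int_0^s\Delta\eta)e^{\int_0^s\eta}ds = (\int_0^x\Delta\eta)P(x)-\int_0^x\Delta\eta\,P\,ds$ with $P(s)=\int_0^s e^{\int_0^t\eta}dt$ and then normalizing converts (\ref{formula for d phi d eta}), for $\Delta\eta\equiv 1$, into
\[
\Big|\frac{\partial\varphi(x)}{\partial\eta}(1)\Big| \;=\; (1-\varphi(x))\int_0^x\varphi(s)\,ds + \varphi(x)\int_x^1\bigl(1-\varphi(s)\bigr)\,ds,
\]
a sum of two \emph{nonnegative} terms with no cancellation. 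Each term can now be compared multiplicatively to its $\varphi=\mathrm{id}$ value (using $D\varphi\asymp 1$, which your step (i)--(ii) give), yielding $\asymp \tfrac12 x(1-x)\asymp\min\{\varphi(x),1-\varphi(x)\}$ uniformly in $x$. The same identity also shows that the kernel representing the functional $\Delta\eta\mapsto\frac{\partial\varphi(x)}{\partial\eta}(\Delta\eta)$ has constant sign, which is why evaluating at $\Delta\eta\equiv 1$ computes the operator norm exactly — a fact both you and the paper use implicitly without justification. Without the integration by parts, or some equivalent rewriting exhibiting the constant-sign structure, your plan does not close.
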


\begin{proof}
In order to prove that the evaluation operator $E$ is (Fr\'echet) differentiable and obtain the formula (\ref{formula for d phi d eta}) we just need to use the Gateaux variation to look for a candidate $T$ for its derivative, i.e.

\begin{equation}
\begin{array}{ccl}
    \displaystyle T(\eta) \Delta \eta & = & \displaystyle \frac{d}{dt}E(\eta + t \Delta \eta ) \left|_{t=0}  \right.. \\
     & & \\
\end{array}    
\end{equation}
Since this calculation is not difficult we left it to the reader. Now we will prove the estimate (\ref{estimate for d phi d eta}). Using techniques of integration we obtain

\begin{equation}
\label{integral of product}
\displaystyle \int_{0}^{x} \left[ \int_{0}^{s} \Delta \eta \right] e^{\int_{0}^{s} \eta }ds = \displaystyle \big( \int_{0}^{x} \Delta \eta \big) \cdot \int_{0}^{x} e^{\int_{0}^{t} \eta }ds - \int_{0}^{x} \left[ \Delta \eta \cdot \int_{0}^{s} e^{\int_{0}^{t} \eta } \right] ds.
\end{equation}
From (\ref{integral of product}), (\ref{formula for d phi d eta}) and (\ref{expression of phi_eta}) and some manipulations we obtain

\begin{equation}
\label{norm of d phi d eta}    
\left| \displaystyle \frac{\partial \varphi (x)}{\partial \eta}(\Delta \eta ) \right| = \varphi (x) \cdot \int_{x}^{1} \Delta \eta ds - \varphi (x) \cdot \int_{0}^{1} \Delta \eta \cdot \varphi(s) ds + \int_{0}^{x} \Delta \eta \cdot \varphi(s) ds. 
\end{equation}
From the definition of the norm
\[
\left| \displaystyle \frac{\partial \varphi (x)}{\partial \eta}(\Delta \eta ) \right| = \sup_{||\Delta \eta|| = 1} \left| \displaystyle \frac{\partial \varphi (x)}{\partial \eta} \right|,
\]
we can substitute $\Delta \eta = 1$ at (\ref{norm of d phi d eta}) and obtain

\[
\left| \displaystyle \frac{\partial \varphi (x)}{\partial \eta}(\Delta \eta ) \right| = \varphi (x) \cdot (1-x) - \varphi (x) \cdot \int_{0}^{1} \varphi(s) ds + \int_{0}^{x} \varphi(s) ds.
\]
Using the fact that in deep renormalization the map $\varphi$ is close to identity, i.e. $\|\varphi (x) - x\|_{\mathcal Cˆ0}$ is small, so we get

\begin{equation}
\begin{array}{ccl}
   \left| \displaystyle \frac{\partial \varphi (x)}{\partial \eta}(\Delta \eta ) \right|  & \asymp & x \cdot (1-x) - x \cdot \int_{0}^{1} s ds + \int_{0}^{x} s ds \\
   & & \\
     & = & \displaystyle \frac{x}{2} \cdot (1-x). \\ 
\end{array}    
\end{equation}
Since 
\[
\displaystyle T_{\frac{1}{4}}(x) \leq \frac{x}{2}(1-x) \leq T_{2}(x)
\]
for all $x \in [0, 1]$, where $T_c(x)$ is the tent map family $T_c:[0,1]\to[0,1]$, defined by
$$T_c(x)=\left\{
\begin{array}{cc}
    cx & \mbox{ for } x\in[0,1/2]\\
     -cx+c &  \mbox{ for } x\in(1/2,1].
\end{array}\right.
$$

The result follows.
\end{proof}

\begin{cor}\cite[Corollary 8.17]{MP}
\label{corollary d fgh dg}
Let $\psi^+, \psi^- \in \mbox{\em Diff}_{+}^2([0, 1])$ and $x \in [0, 1]$. The evaluation operator 

\begin{equation}
\begin{array}{ccccl}
     E^{\psi_+, \psi^-} & : & \mbox{\em Diff}_{+}^2([0, 1]) = \mathcal{C}^0([0,1]) & \rightarrow  & \mathbb{R} \\
     &  & \eta & \mapsto & E^{\psi^+, \psi^-} (\eta) = \psi^+ \circ \varphi_{\eta} \circ \psi^- (x) \\ 
\end{array}    
\end{equation}
is differentiable with derivative $\displaystyle \frac{\partial \big( \psi^+ \circ \varphi_{\eta} \circ \psi^- (x) \big) }{\partial \eta } : \mathcal{C}^0([0,1]) \rightarrow \mathbb{R}$ given by

\begin{equation}
\displaystyle      \frac{\partial \big( \psi^+ \circ \varphi_{\eta} \circ \psi^- (x) \big) }{\partial \eta } \big( \Delta \eta \big) = D \psi^+ (\varphi_{\eta} \circ \psi^- (x)) \cdot \frac{\partial \varphi_{\eta} (\psi^-(x))}{\partial \eta } \big( \Delta \eta \big).
\end{equation}

\end{cor}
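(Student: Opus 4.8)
The plan is to reduce the statement to the chain rule, with Lemma~\ref{lemma d phi d eta} as the single non-trivial ingredient. The key observation is that the inner map $\psi^-$ does not depend on $\eta$: writing $y:=\psi^-(x)$, which belongs to $[0,1]$ since $\psi^-\in\mbox{Diff}_{+}^{2}([0,1])$, we have $E^{\psi^+,\psi^-}(\eta)=\psi^+\big(\varphi_\eta(y)\big)$, so all of the $\eta$-dependence is carried by the evaluation map $\eta\mapsto\varphi_\eta(y)$. This is exactly the operator $E$ of Lemma~\ref{lemma d phi d eta} with base point $y$ in place of $x$; hence it is Fr\'echet differentiable as a map $\mathcal C^0([0,1])\to\mathbb R$, with derivative $\Delta\eta\mapsto\frac{\partial\varphi(y)}{\partial\eta}(\Delta\eta)$ given by formula (\ref{formula for d phi d eta}).

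Next I would compose with $\psi^+$. Since $\psi^+\in\mbox{Diff}_{+}^{2}([0,1])$ it is in particular $\mathcal C^1$, so it is differentiable at the point $\varphi_\eta(y)\in[0,1]$, with derivative $D\psi^+\big(\varphi_\eta(y)\big)$; moreover, by the explicit formula (\ref{expression of phi_eta}), $\varphi_\eta(y)$ depends continuously on $\eta$ and stays in $[0,1]$, so $D\psi^+$ is evaluated only at legitimate points and is locally bounded there. The composition of the $\mathbb R$-valued Fr\'echet-differentiable map $\eta\mapsto\varphi_\eta(y)$ with the one-variable $\mathcal C^1$ function $\psi^+$ is therefore Fr\'echet differentiable, and the chain rule gives that its derivative at $\eta$ is the linear functional
$$\Delta\eta\;\longmapsto\;D\psi^+\big(\varphi_\eta(y)\big)\cdot\frac{\partial\varphi(y)}{\partial\eta}(\Delta\eta).$$
Substituting $y=\psi^-(x)$ back in yields precisely the asserted formula.

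The only real obstacle is the Fr\'echet (as opposed to Gateaux) differentiability of $\eta\mapsto\varphi_\eta(y)$, and this is already contained in Lemma~\ref{lemma d phi d eta}; everything else is the elementary chain rule for a Banach-space-to-$\mathbb R$ map followed by a differentiable function of one real variable. If one preferred a self-contained argument, one could instead compute the Gateaux variation $\frac{d}{dt}\big|_{t=0}\,\psi^+\circ\varphi_{\eta+t\Delta\eta}\circ\psi^-(x)$ directly --- differentiating under the composition and using formula (\ref{formula for d phi d eta}) --- and then verify, exactly as in the proof of Lemma~\ref{lemma d phi d eta}, that this Gateaux derivative is continuous in $\eta$ and hence is the Fr\'echet derivative; this route is longer but uses nothing beyond what is already in the paper.
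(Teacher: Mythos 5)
Your argument is correct, and it is the natural one. The paper itself does not supply a proof of this corollary but simply cites \cite[Corollary 8.17]{MP}; your derivation --- freezing $y=\psi^-(x)$, invoking Lemma~\ref{lemma d phi d eta} for the Fr\'echet differentiability of $\eta\mapsto\varphi_\eta(y)$, and then applying the chain rule for a Banach-space-to-$\mathbb R$ map post-composed with the $\mathcal C^1$ function $\psi^+$ --- is exactly how this statement is obtained there, and it correctly identifies Lemma~\ref{lemma d phi d eta} as the only non-elementary input. Your closing remark about Gateaux-to-Fr\'echet upgrading is a reasonable alternative, though unnecessary since Lemma~\ref{lemma d phi d eta} already gives Fr\'echet differentiability.
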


The next result follows from a straightforward calculation, and its proof is left to the reader.

\begin{lemma}
\label{lemma derivatives of f_L and f_R}
The branches $f_L$ and $f_R$ of $f$ defined in (\ref{definition of a gap map f in M}) are differentiable and their partial derivatives are given by

\begin{equation}
\label{derivatives formulas of f_l and f_R}
\begin{array}{ccl}
     \displaystyle \frac{\partial f_L}{\partial \alpha }(x)   & = &  (1-b) \cdot \displaystyle \big[ \varphi_L \big(  \frac{x-b+1}{1-b} \big)  - 1 \big], \hspace{1cm} \displaystyle \frac{\partial f_L}{\partial \beta }(x) =0 \\
     && \\
     \displaystyle \frac{\partial f_L}{\partial b }(x)   & = &  1+ \alpha \cdot \displaystyle \big[1- \varphi_L \big(  \frac{x-b+1}{1-b} \big)  \big] + \frac{\alpha x}{1-b} D \varphi_L \big(  \frac{x-b+1}{1-b} \big), \\
     && \\
     \displaystyle \frac{\partial f_L}{\partial \eta_L }(x)   & = & \displaystyle |T_{0, L}| \cdot \frac{\partial \varphi_L (1_{I_{0, L}}^{-1}(x))}{\partial \eta_L}, \hspace{1cm} \displaystyle \frac{\partial f_L}{\partial \eta_R }(x) = 0\\
     && \\
     \displaystyle \frac{\partial f_R}{\partial \alpha }(x)   & = &  0, \hspace{1cm} \displaystyle \frac{\partial f_R}{\partial \beta }(x) = b \varphi_R \big( \frac{x}{b} \big) \\
     && \\
     \displaystyle \frac{\partial f_R}{\partial b }(x)   & = &  1+ \beta \cdot \displaystyle \varphi_R \big(  \frac{x}{b} \big)  - \frac{\beta x}{b} D \varphi_R \big(  \frac{x}{b} \big), \\
     && \\
      \displaystyle \frac{\partial f_R}{\partial \eta_L }(x)   & = & 0, \hspace{1cm} \displaystyle \frac{\partial f_R}{\partial \eta_R }(x) = \displaystyle |T_{0, R}| \cdot \frac{\partial \varphi_R (1_{I_{0, R}}^{-1}(x))}{\partial \eta_R}. \\
\end{array}
\end{equation}
Furthermore, all these partial derivatives are bounded.
\end{lemma}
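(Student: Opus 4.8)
The plan is to write each branch explicitly in terms of the coordinates and then differentiate by hand, using the chain rule for the operator $\partial$ from Lemma~\ref{lemma partial operator} together with the formula for $\partial\varphi/\partial\eta$ from Lemma~\ref{lemma d phi d eta}. From (\ref{definition of f_L}) and (\ref{definition of f_R}), using that $1_{I_{0,L}}^{-1}(x)=\frac{x-b+1}{1-b}$, $1_{I_{0,R}}^{-1}(x)=\frac{x}{b}$, $|T_{0,L}|=\alpha(1-b)$ and $|T_{0,R}|=\beta b$, one gets the closed forms
\[
f_L(x)=\alpha(1-b)\,\varphi_L\!\left(\frac{x-b+1}{1-b}\right)+\alpha(b-1)+b,
\qquad
f_R(x)=\beta b\,\varphi_R\!\left(\frac{x}{b}\right)+b-1 .
\]
These make transparent that $f_L$ is independent of $\beta$ and of $\eta_R$, and $f_R$ is independent of $\alpha$ and of $\eta_L$, which yields the vanishing entries at once.

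For the $\alpha$- and $\beta$-derivatives the computation is a one-line differentiation of the displayed formulas. For the $b$-derivative the only point requiring attention is that the inner rescalings also depend on $b$: I would record $\partial_b\!\left(\frac{x-b+1}{1-b}\right)=\frac{x}{(1-b)^2}$ and $\partial_b(x/b)=-x/b^2$, then apply the product and chain rules, which reproduces exactly the stated expressions for $\frac{\partial f_L}{\partial b}$ and $\frac{\partial f_R}{\partial b}$. For the $\eta$-derivatives I would write $f_L=1_{T_{0,L}}\circ\varphi_{\eta_L}\circ 1_{I_{0,L}}^{-1}$ with the two rescalings independent of $\eta_L$; since $1_{T_{0,L}}$ is affine with slope $|T_{0,L}|$, Lemma~\ref{lemma d phi d eta} (equivalently Corollary~\ref{corollary d fgh dg} with $\psi^-=1_{I_{0,L}}^{-1}$ and $\psi^+=1_{T_{0,L}}$) gives $\frac{\partial f_L}{\partial\eta_L}(x)=|T_{0,L}|\cdot \frac{\partial\varphi_L(1_{I_{0,L}}^{-1}(x))}{\partial\eta_L}$, and symmetrically for $f_R$.

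It remains to check boundedness. In $\frac{\partial f_L}{\partial\alpha},\frac{\partial f_L}{\partial b},\frac{\partial f_R}{\partial\beta},\frac{\partial f_R}{\partial b}$ every factor is controlled: the parameters $\alpha,\beta,b,1-b$ lie in $(0,1)$; the arguments $\frac{x-b+1}{1-b}$ and $\frac{x}{b}$ lie in $[0,1]$ when $x$ ranges over $I_{0,L}$, resp.\ $I_{0,R}$, so $\varphi_L,\varphi_R\in[0,1]$; the factors $\frac{x}{1-b}$ and $\frac{x}{b}$ have absolute value at most $1$ on these domains; and $D\varphi_L,D\varphi_R$ are bounded on $[0,1]$ (indeed $0<D\varphi_\bullet\le\nu<1$). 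For the $\eta$-derivatives, $|T_{0,L}|,|T_{0,R}|<1$, and substituting $\Delta\eta\equiv 1$ into (\ref{norm of d phi d eta}) shows that the operator norm of $\frac{\partial\varphi_\bullet}{\partial\eta_\bullet}$ equals $\varphi_\bullet(x)(1-x)-\varphi_\bullet(x)\int_0^1\varphi_\bullet+\int_0^x\varphi_\bullet$, which is bounded by a universal constant since $0\le\varphi_\bullet\le 1$. Hence all six partial derivatives are bounded. The whole argument is routine; I do not expect a genuine obstacle, only the need to track carefully the $b$-dependence of the inner rescalings and to invoke Lemma~\ref{lemma d phi d eta} correctly for the nonlinearity variable, so that is where I would concentrate the care.
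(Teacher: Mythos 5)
The paper offers no proof of this lemma (it is stated as a ``straightforward calculation, left to the reader''), so there is nothing to compare against; your proof supplies exactly what was omitted, and the approach is the natural one. Your closed forms $f_L(x)=\alpha(1-b)\,\varphi_L\!\big(\tfrac{x-b+1}{1-b}\big)+\alpha(b-1)+b$ and $f_R(x)=\beta b\,\varphi_R(x/b)+b-1$ follow correctly from (\ref{definition of f_L})--(\ref{definition of f_R}), your chain-rule bookkeeping for the $b$-dependence of the inner rescalings is right, and the use of Corollary~\ref{corollary d fgh dg} (or directly Lemma~\ref{lemma d phi d eta}) for the $\eta$-derivatives is the intended route. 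The boundedness discussion is also essentially correct.

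One slip worth correcting: you assert parenthetically that $0<D\varphi_\bullet\le\nu<1$. That bound belongs to the branches $f_L,f_R$, not to their diffeomorphic parts. Indeed $\varphi_L,\varphi_R$ are orientation-preserving diffeomorphisms of $[0,1]$ onto itself, so $\int_0^1 D\varphi_\bullet=1$ and $D\varphi_\bullet$ must exceed $1$ somewhere; what is true is $Df_L=\alpha\,D\varphi_L$ and $Df_R=\beta\,D\varphi_R$, and it is these that are $\le\nu$. The conclusion you need survives: $D\varphi_\bullet$ is continuous on the compact interval $[0,1]$, hence bounded (and close to $1$ in the absorbing set where $\varphi_\bullet$ is near the identity), so all six displayed partial derivatives are bounded as claimed.
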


Let $f=(f_L, f_R, b) \in \mathcal{D}$ be a renormalizable dissipative gap map. The boundaries of the the interval $I'=[0_{k+1}^{+}, 0_{k+2}^{-}]$, for $\sigma_f=-$, and $I'=[0_{k+2}^{+}, 0_{k+1}^{-}]$ for $\sigma_f=+$, can be interpreted as evaluation operators, that is

\begin{equation}
\label{boundary evaluation operator} 
\begin{array}{cccll}
 E    & : & M & \rightarrow & \mathbb{R} \\
     & & (\alpha, \beta, b, \varphi_L, \varphi_R)&  \mapsto & 0_{j}^{\pm}
\end{array}
\end{equation}
where $j \in \{ k+1, k+2 \}$ depending on the sign of $f$. For convenience we will call $0_{j}^{\pm}$ as {\it boundary operators}. The next result give us some properties about the boundary operators. 
\begin{lemma}
\label{Lemma with formulas}
The boundary operators $0_{j}^{\pm}$ are differentiable, and the partial derivatives $\displaystyle \frac{\partial 0_{j}^{\pm}}{\partial *}$ are bounded, where $* \in \{ \alpha, \beta, b, \eta_L, \eta_R \}$, and $j \in \{ k+1, k+2 \}$, depending on the sign of $f$.
\end{lemma}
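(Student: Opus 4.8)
\textbf{Proof strategy for Lemma~\ref{Lemma with formulas}.}
The plan is to express each boundary operator $0_j^{\pm}$ as an explicit finite composition of the branch maps $f_L$ and $f_R$ evaluated at the endpoints $b-1$ or $b$, and then differentiate this composition using the chain rule for the partial derivative operator $\partial$ from Lemma~\ref{lemma partial operator}, reducing everything to the partial derivatives of the single branches $f_L,f_R$, which are already known to exist and be bounded by Lemma~\ref{lemma derivatives of f_L and f_R}. Concretely, recall from (\ref{domain of R again}) that when $\sigma_f=-$ we have $0_{k+1}^+=f_L^k(b-1)$ and $0_{k+2}^-=f_L^k\circ f_R(b)$ (with the analogous formulas when $\sigma_f=+$), so each $0_j^{\pm}$ is of the form $g_m\circ g_{m-1}\circ\cdots\circ g_1(p)$ where each $g_i\in\{f_L,f_R\}$, $m\leq k+2$, and $p\in\{b-1,b\}$ depends on $b$ affinely.

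First I would fix the sign, say $\sigma_f=-$, and write out the two relevant compositions. For the partial derivative with respect to $*\in\{\alpha,\beta,\eta_L,\eta_R\}$ (none of which affects the basepoint $p$), formula (\ref{derivative of fn+1}), together with (\ref{derivative of f o g}), gives
\begin{equation*}
\frac{\partial 0_j^{\pm}}{\partial *}=\sum_{i}Dg_m\circ\cdots\circ Dg_{i+1}\bigl(\text{intermediate point}\bigr)\cdot\frac{\partial g_i}{\partial *}\bigl(\text{intermediate point}\bigr),
\end{equation*}
a finite sum of products. For $*=b$ one gets the same expression plus one extra term coming from the dependence of the basepoint $p=p(b)$ on $b$, namely a telescoping product of derivatives $Dg_i$ times $\partial p/\partial b=1$. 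Each factor $Dg_i$ is bounded by $\nu<1$ since $f$ is dissipative, and each factor $\partial g_i/\partial *$ is bounded by Lemma~\ref{lemma derivatives of f_L and f_R}; since the number of terms in the sum is $m\leq k+2$, the only thing to check for boundedness is that the sum does not blow up with $k$. This is exactly where dissipativity saves us: the geometric decay $|Dg_m\circ\cdots\circ Dg_{i+1}|\leq\nu^{m-i}$ makes the sum dominated by $\sum_{i\geq 0}\nu^i=1/(1-\nu)$ times a uniform bound on the single-branch partials, so $\partial 0_j^{\pm}/\partial *$ is bounded uniformly (indeed independently of $k$).

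One subtlety to address is that the partial derivatives $\partial f_L/\partial\eta_L$ and $\partial f_R/\partial\eta_R$ are operators on $\mathcal C^0([0,1])$ (via Lemma~\ref{lemma d phi d eta} and Corollary~\ref{corollary d fgh dg}) rather than real numbers, so when $*\in\{\eta_L,\eta_R\}$ the expression above should be read as an element of the dual of $X\times X$; the chain rule (\ref{derivative of f o g}) still applies verbatim because it is stated for $\partial$ in this generality in \cite[Lemma 9.4]{MarcoWinclker}, and the operator norm bound (\ref{estimate for d phi d eta}) gives the required uniform bound on each such factor. The case $\sigma_f=+$ is handled identically using the formula $0_{k+2}^+=f_R^k\circ f_L(b-1)$, $0_{k+1}^-=f_R^k(b)$. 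The main obstacle, then, is not any single estimate but keeping the bookkeeping of the chain rule over a composition of variable length $k$ under control; the point to emphasize in the writeup is that the contraction $\nu<1$ converts this potentially $k$-dependent sum into a convergent geometric series, which is the same mechanism that will drive the derivative estimates in the lemmas that follow.
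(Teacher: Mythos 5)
Your proof takes essentially the same approach as the paper: express $0_j^{\pm}$ as a finite composition of branches evaluated at $b-1$ or $b$, apply the chain rule identities from Lemma~\ref{lemma partial operator} (in particular (\ref{derivative of fn+1})), reduce to the single-branch partials of Lemma~\ref{lemma derivatives of f_L and f_R}, and use $Df_L, Df_R \leq \nu < 1$ to control the resulting sum. You are slightly more explicit than the paper about why the sum is bounded uniformly in $k$ (the geometric series in $\nu$) and about the extra basepoint term when $* = b$; both points are correct and worth spelling out.
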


\begin{proof}
Consider the boundary operators $0_{k+2}^{-}$ and $0_{k+1}^{+}$, which are explicitly given by

\[
0_{k+1}^{+} = f_L^k(b-1), \hspace{1cm} \mbox{and} \hspace{1cm} 0_{k+2}^{-} = f_L^k \circ f_R (b),
\]
when $\sigma_f=-$, and where $f_L = 1_{T_{0,L}} \circ \varphi_L \circ 1_{I_{0,L}}^{-1}$ and $f_R = 1_{T_{0,R}} \circ \varphi_R \circ 1_{I_{0,R}}^{-1}$. Using (\ref{derivative of fn+1}) and taking $* \in \{ \alpha, \beta, b, \eta_L, \eta_R \}$ we get 

\begin{equation}
\label{leftt boundary}
\begin{array}{ccl}
    \displaystyle \frac{\partial }{\partial *} \big( 0_{k+1}^{+} \big) & =  \displaystyle \frac{\partial}{\partial *} \big(  f_L^k (b-1) \big) =& \displaystyle \sum_{i=0}^{k-1}Df_L^{k-1-i}(f_L^{i+1} (b-1)) \cdot  \frac{\partial f_L}{\partial *} (f_L^i (b-1)),\\
\end{array}    
\end{equation}
and

\begin{equation}
\label{right boundary}
\begin{array}{ccl}
    \displaystyle \frac{\partial }{\partial *} \big( 0_{k+2}^{-} \big) & =  \displaystyle \frac{\partial}{\partial *} \big(  f_L^k \circ f_R (b) \big) =& \displaystyle \sum_{i=0}^{k-1}Df_L^{k-1-i}(f_L^{i+1} \circ f_R(b)) \cdot  \frac{\partial f_L}{\partial *} (f_L^i \circ f_R(b))\\
    && \\
    & & \displaystyle + Df_L^k \circ f_R (b) \cdot \frac{\partial f_R}{\partial *}(b).  \\
\end{array}    
\end{equation}
Using the fact that $0 < f'(x) \leq \nu < 1$, for all $x \in [b-1,b] \setminus \{ 0 \}$, and Lemma~\ref{lemma derivatives of f_L and f_R} we get that $\displaystyle \frac{\partial }{\partial *} \big( 0_{k+2}^{-} \big)$ and $\displaystyle \frac{\partial }{\partial *} \big( 0_{k+1}^{+} \big)$ are bounded. With similar arguments and reasoning we prove that the other boundary operators have bounded partial derivatives.
\end{proof}

\subsection{The $A_{\underline{f}}$ matrix}

\begin{equation}
\label{partial derivatives A for sigma -}
A_{\underline{f}} = \left( \begin{array}{ccc}
\displaystyle \frac{\partial \tilde{\alpha}}{\partial \alpha }     & \displaystyle \frac{\partial \tilde{\alpha}}{\partial \beta } & \displaystyle \frac{\partial \tilde{\alpha}}{\partial b } \\
& & \\
 \displaystyle \frac{\partial \tilde{\beta}}{\partial \alpha }     & \displaystyle \frac{\partial \tilde{\beta}}{\partial \beta } & \displaystyle \frac{\partial \tilde{\beta}}{\partial b } \\
 & & \\
 \displaystyle \frac{\partial \tilde{b}}{\partial \alpha }     & \displaystyle \frac{\partial \tilde{b}}{\partial \beta } & \displaystyle \frac{\partial \tilde{b}}{\partial b } \\
\end{array}           \right),
\end{equation}

All the entries of matrix $A_{\underline{f}}$ can be calculated explicitly by using Lemma~\ref{lemma partial operator}. In order to clarify the calculations we will compute some of them in the next lemma.

\begin{lemma}
\label{lemma partial derivatives of abb tilde respec to abb}
Let $ \underline{f}=(\alpha,\beta,b,\underline{\varphi}_R,
\underline{\varphi}_L) \in \underline{\mathcal D}_0$. The map
\begin{equation}
\begin{array}{ccl}
(0, 1)^3 \ni (\alpha, \beta, b)     & \mapsto & (\tilde{\alpha}, \tilde{\beta}, \tilde{b}) \in (0, 1)^3\\
\end{array}    
\end{equation}
is differentiable. Furthermore, 
for any $\varepsilon>0, K>0$
if $\underline g\in\underline{\mathcal{D}}_0$
is infinitely renormalizable, there exists 
$n_0\in\mathbb N,$ so that if $n\geq n_0$ and $\underline f=\underline{\mathcal{R}}^n \underline g,$ then
the partial derivatives $\displaystyle \Big|\frac{\partial }{\partial \alpha } \tilde{\alpha}\Big|$, $\displaystyle \Big| \frac{\partial }{\partial \beta } \tilde{\alpha}\Big|$, $\displaystyle \Big| \frac{\partial }{\partial b } \tilde{\alpha}\Big|$, $\displaystyle \Big| \frac{\partial }{\partial \alpha } \tilde{\beta}\Big|$, $\displaystyle \Big|\frac{\partial }{\partial \beta } \tilde{\beta}\Big|$ and $\displaystyle \Big|\frac{\partial }{\partial b } \tilde{\beta}\Big|$ are all bounded from above by $\varepsilon$, and the partial derivatives $\displaystyle \Big|\frac{\partial }{\partial \alpha } \tilde{b}\Big|$, $\displaystyle \Big| \frac{\partial }{\partial \beta } \tilde{b}\Big|$ and $\displaystyle\Big| \frac{\partial }{\partial b } \tilde{b}\Big|$ are bounded from below by $K$. In particular $\displaystyle\Big| \frac{\partial }{\partial b } \tilde{b}\Big|\asymp\frac{1}{|I'|}.$ 
(See page~\pageref{page: I'} for the definition of $I'$.)
\begin{comment}
%In particular, there exisits $\epsilon_0 >0$ such that for all $x \in (-\epsilon, \epsilon)$ we have
%\[
%\displaystyle\big|  \frac{\partial }{\partial \alpha } \tilde{b} \big| \leq \displaystyle \big| \frac{\partial }{\partial b } \tilde{b} \big| = \displaystyle \frac{\partial }{\partial b } \tilde{b} \hspace{1cm}
%\mbox{and} \hspace{1cm}
%\big| \frac{\partial }{\partial \beta } \tilde{b} \big| \leq \displaystyle \big| \frac{\partial }{\partial b } \tilde{b} \big| = \displaystyle \frac{\partial }{\partial b } \tilde{b}.
%\]
\end{comment}
\end{lemma}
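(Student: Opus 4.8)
The plan is to compute each of the nine partial derivatives of $(\tilde\alpha,\tilde\beta,\tilde b)$ with respect to $(\alpha,\beta,b)$ directly from the formulas in (\ref{renormalized coordinates case -}), and then show that in deep renormalization all the derivatives of $\tilde\alpha,\tilde\beta$ are small while all the derivatives of $\tilde b$ are large, of order $1/|I'|$. Recall that $\tilde\alpha,\tilde\beta$ are quotients built out of the boundary operators $0^+_{k+1}=f_L^k(b-1)$ and $0^-_{k+2}=f_L^k\circ f_R(b)$ together with one extra application of $f_R\circ f_L$ or $f_R$, so by the quotient rule (\ref{lemma partial operator}) their partial derivatives are expressed in terms of $\partial 0^\pm_j/\partial *$, of $\partial(f_L^k\circ f_R\circ f_L)/\partial *$ and $\partial(f_L^k\circ f_R)/\partial *$, and of the values of these maps, all of which are controlled by Lemma~\ref{lemma derivatives of f_L and f_R} and Lemma~\ref{Lemma with formulas}. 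The key point is the uniform contraction $0<f'\leq\nu<1$: the chain rule (\ref{derivative of fn+1}) for $\partial(f_L^k)$ gives a sum of $k$ terms, the $i$-th of which carries a factor $Df_L^{k-1-i}$ which is at most $\nu^{k-1-i}$, so the whole sum is bounded by a geometric series and hence by a constant $C_0$ independent of $k$. Thus $\partial 0^\pm_j/\partial *$ is bounded above by a universal constant, and the \emph{same} applies to $\partial\tilde f_L/\partial *$ and $\partial\tilde f_R/\partial *$.

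Next I would feed these bounds into the quotient rule. Write $\tilde\alpha=\big(f_L^k\circ f_R\circ f_L(0^+_{k+1})-0^-_{k+2}\big)/0^+_{k+1}$. The numerator is a difference of two quantities that are both extremely close to each other (indeed the numerator equals $\tilde\alpha\cdot 0^+_{k+1}$, and $0^+_{k+1}$ is the right endpoint of $I'$, which is exponentially small), so the numerator is $O(|I'|)$; its $*$-derivative is a bounded quantity; the denominator $0^+_{k+1}\asymp|I'|$ and its derivative is bounded. Hence by the quotient rule $\partial\tilde\alpha/\partial *=\big(\partial(\mathrm{num})\cdot 0^+_{k+1}-(\mathrm{num})\cdot\partial 0^+_{k+1}\big)/(0^+_{k+1})^2$; the first term in the numerator is $O(|I'|)$ and the second is $O(|I'|)$, while the denominator is $\asymp|I'|^2$, so a priori this is only $O(1/|I'|)$. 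To get that it is in fact \emph{small} I need the finer cancellation: the two endpoints of $I'$, namely $0^+_{k+1}$ and $0^-_{k+2}$, have derivatives with respect to $*$ that are asymptotically equal, because $f_R\circ f_L$ and $f_R$ are close to affine contractions of factor $\to 0$ on the tiny interval $I'$, so $\partial(f_L^k\circ f_R\circ f_L(0^+_{k+1}))/\partial *$ differs from $\partial 0^-_{k+2}/\partial *$ by a term that is small relative to $|I'|$. This is where the hypothesis of \emph{deep} renormalization enters: one uses the a priori real bounds / the fact that $D(f_L^k\circ f_R\circ f_L)\to 0$ and that the diffeomorphic parts converge to affine maps (the Proposition in \S2.4 and the Sandwich Lemma) to say that on $I'$ the relevant compositions are $\varepsilon$-close to affine, and the slopes tend to $0$, forcing $|\partial\tilde\alpha/\partial *|\leq\varepsilon$ for $n\geq n_0$. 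The same argument applies to $\tilde\beta$.

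For $\tilde b=0^-_{k+2}/|I'|=0^-_{k+2}/(0^-_{k+2}-0^+_{k+1})$, the quotient rule gives $\partial\tilde b/\partial *=\big(\partial 0^-_{k+2}\cdot|I'|-0^-_{k+2}\cdot(\partial 0^-_{k+2}-\partial 0^+_{k+1})\big)/|I'|^2$. Since $|I'|$ is exponentially small and the numerator is, generically, a bounded quantity that does not vanish (for the case $*=b$ one checks the numerator is bounded away from $0$, using that $\partial 0^-_{k+2}/\partial b$ and $\partial 0^+_{k+1}/\partial b$ each contain the leading ``$+1$'' coming from $\partial f_L/\partial b$ and $\partial f_R/\partial b$ in Lemma~\ref{lemma derivatives of f_L and f_R} but with different multiplicative factors, so the difference is controlled), we obtain $|\partial\tilde b/\partial *|\gtrsim 1/|I'|\geq K$ once $n$ is large enough that $|I'|<1/K$. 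Tracking constants more carefully gives $|\partial\tilde b/\partial b|\asymp 1/|I'|$.

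The main obstacle is the second paragraph: proving that the derivatives of $\tilde\alpha,\tilde\beta$ are genuinely small, not merely $O(1/|I'|)$. Naively the quotient rule only gives boundedness of the numerator over $|I'|^2$, which blows up; the smallness must come from the near-affineness of deep renormalizations on the microscopic interval $I'$, i.e. from the fact that the contraction factors of the relevant branch compositions tend to $0$, so that an extra application of $f_R$ (or $f_R\circ f_L$) changes the answer — and its derivative — by a factor tending to $0$. Making this rigorous requires combining the uniform geometric bound on $\partial(f_L^k)$ with the convergence of renormalization to affine maps established in \S2.4 and the distortion/Sandwich estimates of \cite{Marco}, applied to both the maps and their $\partial$-derivatives. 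The finitely-renormalizable and $\sigma_f=+$ cases are handled by the symmetric formulas, which I would state are analogous and omit.
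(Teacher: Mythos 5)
Your overall framework matches the paper's: compute the nine partial derivatives from the quotient-rule expressions in \eqref{partial derivatives of abb tilde respec to abb}, control the boundary operators via Lemmas~\ref{lemma derivatives of f_L and f_R} and~\ref{Lemma with formulas}, and then use deep-renormalization near-affineness to get the desired sizes. But there are two real issues.

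\emph{The $\tilde b$ estimate is confused.} You propose to show the numerator $\partial 0^-_{k+2}\cdot|I'| - 0^-_{k+2}\cdot\partial|I'|$ is ``bounded away from $0$'' (and worry about cancellation between $\partial 0^-_{k+2}/\partial b$ and $\partial 0^+_{k+1}/\partial b$). That is both unnecessary and inconsistent with your conclusion: if the numerator were bounded away from $0$ you would get $1/|I'|^2$, not $1/|I'|$; and in fact the numerator is $O(|I'|)$, not bounded below, since $0^-_{k+2}\leq|I'|$. The paper's argument is much simpler and does not need any cancellation: rewrite the numerator as $0^-_{k+2}\,\partial_b 0^+_{k+1}-0^+_{k+1}\,\partial_b 0^-_{k+2}$, and notice that when $\sigma_f=-$ we have $0^+_{k+1}<0<0^-_{k+2}$, so both summands are positive and the numerator is \emph{at least} $\min\{\partial_b 0^+_{k+1},\partial_b 0^-_{k+2}\}\cdot|I'|\geq c|I'|$ for a universal $c>0$. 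Dividing by $|I'|^2$ gives the $\asymp 1/|I'|$ bound directly. You should replace your cancellation argument with this sign observation.

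\emph{The $\tilde\alpha,\tilde\beta$ estimate is only sketched where the real work lies.} You correctly observe that the naive quotient rule only gives $O(1/|I'|)$ and that the smallness must come from cancellation plus near-affineness at deep levels — but you stop at that heuristic. The paper makes this rigorous by a chain of Mean Value Theorem applications: for example, $f_L^k\circ f_R\circ f_L(0^+_{k+1})-0^-_{k+2}=D(f_L^k\circ f_R)(\xi)\,Df_L(\zeta)\,0^+_{k+1}$, and similarly for the differences $\partial_\alpha(f_L^k)\circ f_R\circ f_L(0^+_{k+1})-\partial_\alpha(f_L^k)\circ f_R(b)$; each MVT application produces a factor $|0^+_{k+1}|$ (which cancels the $1/(0^+_{k+1})^2$) \emph{and} a factor involving a second derivative of an iterate, such as $D^2 f_L^{k-1-i}(\xi_i)$ or $D^2(f_L^k\circ f_R)(w)$. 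The punchline is then the Proposition in \S 2.4: for dissipative gap mappings, $D^2 f^k\to 0$ and $D^3f^k\to 0$ as $k\to\infty$, so these remaining terms vanish at deep levels. Without explicitly extracting the $|0^+_{k+1}|\cdot D^2(\cdot)$ structure, your claim that ``the slopes tend to $0$, forcing $|\partial\tilde\alpha/\partial *|\leq\varepsilon$'' is an assertion, not an argument. You need to write down the MVT decomposition to make the cancellation precise.
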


\begin{proof}
We will prove this lemma in the case where $\sigma_f=-$. The case $\sigma_f=+$ is similar and we will leave it to the reader. 
From (\ref{renormalized coordinates case -}) we obtain the partial derivatives

\begin{equation}
\label{partial derivatives of abb tilde respec to abb}
\begin{array}{ccl}
\displaystyle \frac{\partial }{\partial *} \tilde{\alpha }    & = & \displaystyle \frac{1}{(0_{k+1}^{+})^2} \cdot \left\{ 0_{k+1}^{+} \cdot \displaystyle \frac{\partial }{\partial *} \big( f_L^k \circ f_R \circ f_L (0_{k+1}^{+}) \big) - 0_{k+1}^{+} \cdot  \displaystyle \frac{\partial }{\partial *} \big( 0_{k+2}^{-} \big) \right. \\
&& \\
&& \hspace{1.5cm} \left. - \displaystyle \big[ f_L^k \circ f_R \circ f_L (0_{k+1}^{+}) - 0_{k+2}^{-} \big] \cdot  \displaystyle \frac{\partial }{\partial *} \big( 0_{k+1}^{+} \big)   \right\} \\
&& \\
\displaystyle \frac{\partial }{\partial *} \tilde{\beta }    & = & \displaystyle \frac{1}{(0_{k+2}^{-})^2} \cdot \left\{ 0_{k+2}^{-} \cdot \displaystyle \frac{\partial }{\partial *} \big( f_L^k \circ f_R (0_{k+2}^{-}) \big) - 0_{k+2}^{-} \cdot  \displaystyle \frac{\partial }{\partial *} \big( 0_{k+1}^{+} \big) \right. \\
&& \\
&& \hspace{1.5cm} \left. - \displaystyle \big[ f_L^k \circ f_R (0_{k+2}^{-}) - 0_{k+1}^{+} \big] \cdot  \displaystyle \frac{\partial }{\partial *} \big( 0_{k+2}^{-} \big)   \right\} \\
&& \\
\displaystyle \frac{\partial }{\partial *} \tilde{b}    & = & (1-\tilde{b}) \cdot |I'|^{-1} \cdot \displaystyle \frac{\partial }{\partial *} \big( f_L^k \circ f_R(b) \big) + 
|I'|^{-1} \cdot \tilde{b} \cdot \displaystyle \frac{\partial }{\partial *} \big( f_L^k (b-1) \big),\\
&& \\
\end{array}    
\end{equation}
where $* \in \{ \alpha, \beta, b \}$. Let us start to deal with the first line of $A_{\underline{f}}$, that is, with the partial derivatives

\[
\displaystyle \frac{\partial \tilde{\alpha}}{\partial *}
\]
where $* \in \{ \alpha, \beta, b\}$. Taking $* = \alpha$ we obtain

\begin{equation}
\label{derivative of alpha tilde resp to alpa}
\begin{array}{ccl}
\displaystyle \frac{\partial }{\partial \alpha} \tilde{\alpha }    & = & \displaystyle \frac{1}{(0_{k+1}^{+})^2} \cdot \left\{ 0_{k+1}^{+} \cdot \displaystyle \frac{\partial }{\partial \alpha} \big( f_L^k \circ f_R \circ f_L (0_{k+1}^{+}) \big) - 0_{k+1}^{+} \cdot  \displaystyle \frac{\partial }{\partial \alpha} \big( 0_{k+2}^{-} \big) \right. \\
&& \\
&& \hspace{1.5cm} \left. - \displaystyle \big[ f_L^k \circ f_R \circ f_L (0_{k+1}^{+}) - 0_{k+2}^{-} \big] \cdot  \displaystyle \frac{\partial }{\partial \alpha} \big( 0_{k+1}^{+} \big)   \right\}. \\
&& \\
\end{array}    
\end{equation}
From (\ref{derivative of f o g}) and using the fact that $f_R$ does not depend on $\alpha$ we have

\begin{equation}
\label{application of derivative operator to f_L^k f_R f_L}
\begin{array}{ccl}
\displaystyle \frac{\partial }{\partial \alpha} \big( f_L^k \circ f_R \circ f_L (0_{k+1}^{+}) \big) & = & \displaystyle \frac{\partial }{\partial \alpha} \big( f_L^k \big) \circ f_R \circ f_L (0_{k+1}^{+}) \\
&& \\
&& + D \big( f_L^k  \circ f_R \big) \circ f_L(0_{k+1}^{+}) \cdot 
\displaystyle \frac{\partial }{\partial \alpha} \big( f_L(0_{k+1}^{+}) \big)
\\
\end{array}    
\end{equation}
Since $0_{k+1}^{+}=f_L^k(b-1)$ we can apply (\ref{derivative of fn+1}) and get

\begin{equation}
\label{derivative of f_L(0_{k+1}^{+}) resp to alpha}
\displaystyle \frac{\partial }{\partial \alpha} \big( f_L(0_{k+1}^{+}) \big) = \displaystyle \frac{\partial }{\partial \alpha} \big( f_L^{k+1}(b-1) \big) = \displaystyle \sum_{i=0}^{k} Df_L^{k-i}(f_L^{i+1}(b-1)) \cdot \displaystyle \frac{\partial f_L}{\partial \alpha} (f_L^i(b-1)).    
\end{equation}
Since $0_{k+2}^{-} = f_L^k \circ f_R(b)$ by applying the Mean Value Theorem to the difference $f_L^k \circ f_R \circ f_L (0_{k+1}^{+}) - 0_{k+2}^{-} $ we obtain a point $\xi \in (f_L(0_{k+1}^{+}),b)$ such that

\begin{equation}
\label{xi point}
\displaystyle f_L^k \circ f_R \circ f_L (0_{k+1}^{+}) - 0_{k+2}^{-} = 
f_L^k \circ f_R \circ f_L (0_{k+1}^{+})-f_L^k \circ f_R (b) =
D \big( f_L^k \circ f_R \big)(\xi) \cdot \big[ f_L(0_{k+1}^{+})- b \big].
\end{equation}
Since $b=f_L(0^-)$ by applying the Mean Value Theorem once more we obtain another point $\zeta \in (0_{k+1}^{+}, 0)$ such that
\begin{equation}
\label{zeta point}
f_L(0_{k+1}^{+})- b = f_L(0_{k+1}^{+})- f_L(0^-) = D f_L (\zeta ) \cdot 0_{k+1}^{+}.    
\end{equation}
Substituting (\ref{zeta point}), (\ref{xi point}) and (\ref{application of derivative operator to f_L^k f_R f_L})  into (\ref{derivative of alpha tilde resp to alpa}) and after some manipulations we get

\begin{equation}
\label{derivative of alpha tilde resp to alpa step 2}
\begin{array}{ccl}
\displaystyle \frac{\partial }{\partial \alpha} \tilde{\alpha }    & = & \displaystyle \frac{1}{(0_{k+1}^{+})} \cdot \left\{  \displaystyle \frac{\partial }{\partial \alpha} \big( f_L^k \big) \circ f_R \circ f_L (0_{k+1}^{+})  - \displaystyle \frac{\partial }{\partial \alpha} \big( f_L^k \big) \circ f_R(b)\right. \\
&& \\
&& \hspace{1.5cm} \left.+D(f_L^k \circ f_R) \circ f_L(0_{k+1}^{+}) \cdot \displaystyle \frac{\partial }{\partial \alpha} \big( f_L(0_{k+1}^{+}) \big) \right. \\
&& \\
&& \hspace{1.5cm} \left. - \displaystyle \big[ D \big( f_L^k \circ f_R \big)(\xi) \cdot D f_L (\zeta )  \big] \cdot  \displaystyle \frac{\partial }{\partial \alpha} \big( 0_{k+1}^{+} \big)   \right\}. \\
&& \\
\end{array}    
\end{equation}

By (\ref{derivative of fn+1}) we obtain

\begin{equation}
\begin{array}{l}
    \displaystyle \frac{\partial }{\partial \alpha} \big( f_L^k \big) \circ f_R \circ f_L (0_{k+1}^{+}) - \frac{\partial }{\partial \alpha} \big( f_L^k \big) \circ f_R (b)  \\
    \\
= \displaystyle \sum_{i=0}^{k-1}Df_L^{k-1-i} \big( f_L^{i+1} \circ f_R \circ f_L(0_{k+1}^{+})\big) \cdot \displaystyle \frac{\partial f_L}{\partial \alpha}\big( f_L^i \circ f_R \circ f_L(0_{k+1}^{+})\big) \\
\\
- \displaystyle \sum_{i=0}^{k-1}Df_L^{k-1-i} \big( f_L^{i+1} \circ f_R(b)\big) \cdot \displaystyle \frac{\partial f_L}{\partial \alpha} \big( f_L^i \circ f_R (b)\big). \\
\end{array}    
\end{equation}
From Lemma~\ref{lemma derivatives of f_L and f_R} we know that  $\displaystyle  \frac{\partial f_L}{\partial \alpha }(x)$ is bounded, then putting 

\[
C_1 = \max_{0 \leq i < k } \left\{   \displaystyle \big| \frac{\partial f_L}{\partial \alpha }(f_L^i \circ f_R \circ f_L(0_{k+1}^{+})) \big|, \big|  \frac{\partial f_L}{\partial \alpha }(f_L^i \circ f_R (b)) \big| \right\},
\]
we obtain

\begin{equation}
\begin{array}{l}
    \displaystyle \big| \frac{\partial }{\partial \alpha} \big( f_L^k \big) \circ f_R \circ f_L (0_{k+1}^{+}) - \frac{\partial }{\partial \alpha} \big( f_L^k \big) \circ f_R (b) \big| \\
    \\
\leq C_1 \cdot \displaystyle \sum_{i=0}^{k-1} \big| Df_L^{k-1-i} \big( f_L^{i+1} \circ f_R \circ f_L(0_{k+1}^{+})\big) - Df_L^{k-1-i} \big( f_L^{i+1} \circ f_R(b)\big) \big|. \\
\end{array}    
\end{equation}
Applying the Mean Value Theorem twice we obtain a point $\xi_i \in (f_{L}^{i+1} \circ f_R \circ f_L(0_{k+1}^{+}), f_{L}^{i+1} \circ f_R(b))$, and a point $\theta_i \in (f_L(0_{k+1}^{+}), b)$ such that

\begin{equation}
\label{xi_i and theta_i points}
 \begin{array}{l}
\big| Df_L^{k-1-i} \big( f_L^{i+1} \circ f_R \circ f_L(0_{k+1}^{+})\big) - Df_L^{k-1-i} \big( f_L^{i+1} \circ f_R(b)\big) \big|      \\
\\
= \big| D^2 f_L^{k-1-i}(\xi_i)\big| \cdot \big| D(f_L^{i+1} \circ f_R) (\theta_i)\big| \cdot \big| Df_L(\zeta)\big| \cdot |0_{k+1}^{+}|.       
 \end{array}   
\end{equation}
From this we obtain

\begin{equation}
\label{using xi_i and theta_i points first time}
\begin{array}{l}
 \displaystyle \big| \frac{\partial }{\partial \alpha} \big( f_L^k \big) \circ f_R \circ f_L (0_{k+1}^{+}) - \frac{\partial }{\partial \alpha} \big( f_L^k \big) \circ f_R (b) \big| \\
    \\
\leq C_1 \cdot |0_{k+1}^{+}| \cdot \displaystyle \sum_{i=0}^{k-1} \big| D^2 f_L^{k-1-i}(\xi_i)\big| \cdot \big| D(f_L^{i+1} \circ f_R) (\theta_i)\big| \cdot \big| Df_L(\zeta)\big| \\
\\
= C_1 \cdot  |0_{k+1}^{+}| \cdot \big| Df_L(\zeta)\big| \cdot \displaystyle \sum_{i=0}^{k-1} \big| D^2 f_L^{k-1-i}(\xi_i)\big| \cdot \big| Df_L^{i} \circ f_L \circ f_R (\theta_i)\big| \cdot \big| Df_L \circ f_R (\theta_i) \big| \cdot \big| Df_R(\theta_i) \big|. \\
%\\
%\leq C_1 \cdot  C_{1,2} \cdot |0_{k+1}^{+}| \cdot \nu_f^3 \cdot \displaystyle \sum_{i=0}^{\infty} \nu_f^i = C_1 \cdot  C_{1,2} \cdot |0_{k+1}^{+}| \cdot \nu_f^3 \cdot \displaystyle \frac{1}{1-\nu_f},
\end{array}    
\end{equation}
%where we use that our mapping $f$ has first derivative bounded above by $\nu_f \in (0, 1)$, and its second derivative in absolute value is bounded above by a positive constant $C_{1,2}$.

For the other difference in (\ref{derivative of alpha tilde resp to alpa step 2}) we start by observing that $\displaystyle \frac{\partial }{\partial \alpha} \big( f_L(0_{k+1}^{+})\big)$ and $\displaystyle \frac{\partial }{\partial \alpha} \big( 0_{k+1}^{+} \big)$ are either simultaneously positive or negative. Furthermore, from Lemma~\ref{Lemma with formulas} we have that $\displaystyle \frac{\partial }{\partial \alpha} \big( 0_{k+1}^{+} \big)$ is bounded, and
arguing similarly, we have that $\displaystyle \frac{\partial }{\partial \alpha} \big( f_L(0_{k+1}^{+})\big)$ is also bounded. Thus there exists a constant $\displaystyle C_2>0$ such that
 
 \begin{equation}
 \label{second difference in alpha tilde to alpha}
\begin{array}{l}
\big| \displaystyle D(f_L^k \circ f_R) \circ f_L(0_{k+1}^{+}) \cdot \frac{\partial }{\partial \alpha} \big( f_L(0_{k+1}^{+})\big) %\\
%\\
 - \displaystyle  \big[ D \big( f_L^k \circ f_R \big)(\xi) \cdot D f_L (\zeta ) \big] \cdot  \displaystyle \frac{\partial }{\partial \alpha} \big( 0_{k+1}^{+} \big) \big| \\  
 \\
 \leq C_2 \cdot \big| D(f_L^k \circ f_R) \circ f_L(0_{k+1}^{+}) - D(f_L^k \circ f_R) (\xi) \big| \\
 \\
 \leq C_2 \cdot \big| D^2(f_L^k \circ f_R)(w)\big| \cdot \big| Df_L (\zeta) \big| \cdot |0_{k+1}^{+}| \\
 %\\
 %\leq C_2 \cdot C_{1,2} \cdot \nu_f \cdot |0_{k+1}^{+}|,
\end{array}     
 \end{equation}
where $w \in (f_L(0_{k+1}^{+}), \xi)$ is a point given by the Mean Value Theorem. %$C_{1,2}$ and $\nu_f$ are as before.

Substituting (\ref{using xi_i and theta_i points first time}) and (\ref{second difference in alpha tilde to alpha}) into (\ref{derivative of alpha tilde resp to alpa step 2}) we obtain

\begin{equation}
\label{estimate of alpha tilde resp to alpha} 
\begin{array}{ccl}
\displaystyle \big|  \frac{\partial }{\partial \alpha} \tilde{\alpha}\big|     & \leq &   C_1 \cdot \big| Df_L(\zeta)\big| \cdot \displaystyle \sum_{i=0}^{k-1} \big| D^2 f_L^{k-1-i}(\xi_i)\big| \cdot \big| Df_L^{i} \circ f_L \circ f_R (\theta_i)\big| \cdot \big| Df_L \circ f_R (\theta_i) \big| \cdot \big| Df_R(\theta_i) \big| \\
     & & \\
     & & + C_2 \cdot \big| D^2(f_L^k \circ f_R)(w)\big| \cdot \big| Df_L (\zeta) \big|.\\
\end{array}
\end{equation}

Since the first and second derivatives of $f$ goes to zero when the level of renormalization goes to infinity we conclude that $
\displaystyle \big| \frac{\partial }{\partial \alpha} \tilde{\alpha} \big| \longrightarrow 0,$
when the level of renormalization goes to infinity. With same arguments and reasoning we can prove that $\displaystyle \big| \frac{\partial }{\partial \beta} \tilde{\alpha} \big|$, $\displaystyle \big| \frac{\partial }{\partial b} \tilde{\alpha} \big|$, $\displaystyle \big| \frac{\partial }{\partial \alpha} \tilde{\beta} \big|$, $\displaystyle \big| \frac{\partial }{\partial \beta} \tilde{\beta} \big|$ and $\displaystyle \big| \frac{\partial }{\partial b} \tilde{\beta} \big|$ all tend to zero as the level of renormalization tends to infinity.

Now we will prove that $\displaystyle \big| \frac{\partial \tilde{b} }{\partial b}  \big|$ is big. From (\ref{partial derivatives of abb tilde respec to abb}) we have

\begin{equation}
\label{eqn:57}
\begin{array}{ccl}
\displaystyle \big| \frac{\partial \tilde{b}}{\partial b}  \big|   &  = & \displaystyle \frac{1}{|I'|^2} \cdot \left\{  0_{k+2}^{-} \cdot \frac{\partial }{\partial b} \big( 0_{k+1}^{+} \big) - 0_{k+1}^{+} \cdot \frac{\partial }{\partial b} \big( 0_{k+2}^{-} \big) \right\} \\
     &  & \\
     & \geq & \displaystyle \frac{1}{|I'|} \cdot \mbox{min} \left\{ \frac{\partial }{\partial b} \big( 0_{k+1}^{+} \big), \frac{\partial }{\partial b} \big( 0_{k+2}^{-} \big) \right\} \\
     & & \\
     & \geq & \displaystyle \frac{1}{|I'|} \cdot \mbox{min} \left\{  \frac{\partial f_L}{\partial b} \big( f_L^{k-1}(b-1) \big), \frac{\partial f_L}{\partial b} \big( f_L^{k-1} \circ f_R(b) \big) \right\} \\
     & & \\
\end{array}    
\end{equation}
which is big since the size of $I'$ goes to infinity when the level of renormalization is deeper, and from Lemma~\ref{lemma derivatives of f_L and f_R} we get that $\displaystyle \frac{\partial f_L}{\partial b} \big( f_L^{k-1} \circ f_R(b) \big)$ and $\displaystyle \frac{\partial f_L}{\partial b} \big( f_L^{k-1}(b-1) \big)$ are both greater than a positive constant $c>1/3$. With the same arguments we prove that $\displaystyle \big| \frac{\partial \tilde{b} }{\partial \alpha}  \big|$ and $\displaystyle \big| \frac{\partial \tilde{b} }{\partial \beta}  \big|$ are big.
\end{proof}

\begin{remark}
\label{remark similar calculations}
We note that all the calculations used to get $\frac{\partial \tilde{\alpha}}{\partial \alpha }(x)$ in the above proof of Lemma~\ref{lemma partial derivatives of abb tilde respec to abb} we can use to get the others partial derivatives $\frac{\partial \tilde{\alpha}}{\partial \beta }(x)$, $\frac{\partial \tilde{\alpha}}{\partial b }(x)$, $\frac{\partial \tilde{\alpha}}{\partial \eta_L} $ and $\frac{\partial \tilde{\alpha}}{\partial \eta_R }(x)$, just observing that in each case the constants will depend on the specific partial derivative we are calculating, that is, in the calculation of $\frac{\partial \tilde{\alpha}}{\partial \eta_L }(x)$ the constants $C_1$ and $C_2$ will depend on $\frac{\partial f_L}{\partial \eta_L }$.
\end{remark}

\subsection{The $B_{\underline{f}}$ matrix}

\begin{equation}
\label{partial derivatives B for sigma -}
B_{\underline{f}} = \left( \begin{array}{cc}
\displaystyle \frac{\partial \tilde{\alpha}}{\partial \eta_L }     & \displaystyle \frac{\partial \tilde{\alpha}}{\partial \eta_R } \\
& \\
\displaystyle \frac{\partial \tilde{\beta}}{\partial \eta_L } & 
\displaystyle \frac{\partial \tilde{\beta}}{\partial \eta_R } \\
 & \\
 \displaystyle \frac{\partial \tilde{b}}{\partial \eta_L }     & \displaystyle \frac{\partial \tilde{b}}{\partial \eta_R } \\ 
\end{array}           \right),
\end{equation}

\begin{lemma}
\label{lemma entries of B_f matrix}
Let $\underline{f} \in \underline{\mathcal D}_0$. The maps
\begin{equation}
\begin{array}{ccl}
{\mathcal C}^1([0, 1]) \ni \eta_L     & \mapsto &  (\tilde{\alpha}, \tilde{\beta}, \tilde{b}) \in (0, 1)^3\\
&& \\
 {\mathcal C}^1([0, 1]) \ni \eta_R   & \mapsto &  (\tilde{\alpha}, \tilde{\beta}, \tilde{b}) \in (0, 1)^3
\end{array}    
\end{equation}
are differentiable. 
Moreover, for any $\varepsilon>0$, if $\underline g\in\underline{\mathcal{D}}$
is infinitely renormalizable, and $\underline f=\underline{\mathcal{R}}\underline g$, then there exists $n_0\in\mathbb N$ so that for $n\geq n_0$ we have that 
$\displaystyle \Big| \frac{\partial \tilde{\alpha}}{\partial \eta_L }\Big|,  \displaystyle \Big|\frac{\partial \tilde{\alpha}}{\partial \eta_R }\Big|,
\displaystyle\Big| \frac{\partial \tilde{\beta}}{\partial \eta_L }\Big|,
\displaystyle \Big|\frac{\partial \tilde{\beta}}{\partial \eta_R }\Big|<\varepsilon$, $\displaystyle \Big|\frac{\partial \tilde{b}}{\partial \eta_R }\Big|=0,$ and $\displaystyle \Big| \frac{\partial \tilde{b}}{\partial \eta_L }\Big|\asymp\frac{b}{|I'|},$ where $I'$ is as defined on page~\pageref{page: I'}.
\end{lemma}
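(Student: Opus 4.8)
The approach is to differentiate the coordinate formulas~(\ref{renormalized coordinates case -}) for $\tilde\alpha,\tilde\beta,\tilde b$ with respect to $\eta_L$ and $\eta_R$, using the calculus of the partial derivative operator $\partial$ (Lemma~\ref{lemma partial operator}) together with the building blocks already assembled: Lemma~\ref{lemma derivatives of f_L and f_R}, which gives $\partial f_L/\partial\eta_L$ and $\partial f_R/\partial\eta_R$ and records that $\partial f_L/\partial\eta_R=\partial f_R/\partial\eta_L=0$; Lemma~\ref{Lemma with formulas} on the boundedness of the partial derivatives of the boundary operators $0_j^\pm$; and Corollary~\ref{corollary d fgh dg} with the estimate~(\ref{estimate for d phi d eta}) of Lemma~\ref{lemma d phi d eta}. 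Differentiability of the two maps in the statement then follows from the chain rule. As in the rest of the paper I carry out the case $\sigma_f=-$; in the case $\sigma_f=+$ the roles of $\eta_L$ and $\eta_R$ (and of $b$ and $1-b$) are interchanged, and the argument is the same.

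For the four ``off-diagonal'' entries $\partial\tilde\alpha/\partial\eta_L$, $\partial\tilde\alpha/\partial\eta_R$, $\partial\tilde\beta/\partial\eta_L$, $\partial\tilde\beta/\partial\eta_R$, the computation is exactly that of Lemma~\ref{lemma partial derivatives of abb tilde respec to abb} with $\partial/\partial\alpha$ replaced by $\partial/\partial\eta_L$ or $\partial/\partial\eta_R$ (cf.\ Remark~\ref{remark similar calculations}): one expands $\partial(f_L^k)$ and $\partial(f_L^k\circ f_R)$ by~(\ref{derivative of fn+1}), forms the appropriate differences of evaluations, and applies the Mean Value Theorem repeatedly, arriving at a finite sum of terms each carrying a factor $D^2f_L^{\,j}$ (or $D^2(f_L^k\circ f_R)$) and a factor $|0_{k+1}^+|$. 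Since $|0_{k+1}^+|\le|I'|\to 0$ and the second and third derivatives of deep renormalizations tend to $0$ (by the proposition on convergence of renormalization to affine maps), each of these four operator norms is $<\varepsilon$ for all large $n$.

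The identity $\partial\tilde b/\partial\eta_R\equiv 0$ is exact, and follows from a structural observation. By~(\ref{renormalized coordinates case -}) and~(\ref{domain of R again}) one has $\tilde b=0_{k+2}^-/|I'|$ with $0_{k+2}^-=f_L^k\circ f_R(b)$, $0_{k+1}^+=f_L^k(b-1)$, $|I'|=0_{k+2}^- - 0_{k+1}^+$; and $f_R(b)=\beta b+b-1$ because $1_{I_{0,R}}^{-1}(b)=1$ and $\varphi_R(1)=1$, so $f_R(b)$ does not depend on $\varphi_R$, hence not on $\eta_R$. As $f_L$ also does not depend on $\eta_R$, all of $0_{k+1}^+,0_{k+2}^-,|I'|$ and therefore $\tilde b$ depend only on $(\alpha,\beta,b,\eta_L)$, so $\partial\tilde b/\partial\eta_R\equiv 0$. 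For the last estimate, differentiating $\tilde b=0_{k+2}^-/(0_{k+2}^- - 0_{k+1}^+)$ and using $0_{k+1}^+<0<0_{k+2}^-$ gives
\[
\frac{\partial\tilde b}{\partial\eta_L}=\frac{1}{|I'|}\Bigl((1-\tilde b)\,\frac{\partial (0_{k+2}^-)}{\partial\eta_L}+\tilde b\,\frac{\partial (0_{k+1}^+)}{\partial\eta_L}\Bigr),
\]
so it suffices to show that $\partial(0_{k+1}^+)/\partial\eta_L$ and $\partial(0_{k+2}^-)/\partial\eta_L$ are, up to an error negligible in deep renormalization, one and the same nonzero functional of operator norm $\asymp b$; the convex combination above then has norm $\asymp b$ as well.

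To establish that, expand $\partial(f_L^k(b-1))/\partial\eta_L$ and $\partial(f_L^k\circ f_R(b))/\partial\eta_L$ by~(\ref{derivative of fn+1}): the term $i=k-1$ carries no $Df_L$ factor and equals $\partial f_L/\partial\eta_L$ evaluated at $0_k^+=f_L^{k-1}(b-1)$, respectively at $0_{k+1}^-=f_L^{k-1}\circ f_R(b)$. Since $0_{k+1}^+=f_L(0_k^+)$ and $0_{k+2}^-=f_L(0_{k+1}^-)$ both lie in $I'$, which shrinks to $\{0\}$ in deep renormalization, both $0_k^+$ and $0_{k+1}^-$ converge to the $f_L$-preimage $x_\ast:=f_L^{-1}(0)$ (which exists because $f_L(b-1)=\min T_{0,L}<0$ when $\sigma_f=-$); hence both leading terms are close to $\partial f_L/\partial\eta_L(x_\ast)$. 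By Lemma~\ref{lemma derivatives of f_L and f_R} this equals $|T_{0,L}|\cdot\partial\varphi_L(1_{I_{0,L}}^{-1}(x_\ast))/\partial\eta_L$, and from $f_L(x_\ast)=0$ one computes $\varphi_L(1_{I_{0,L}}^{-1}(x_\ast))=1-b/|T_{0,L}|$; by~(\ref{estimate for d phi d eta}) its operator norm is thus comparable to $|T_{0,L}|\cdot\min\{b/|T_{0,L}|,\,1-b/|T_{0,L}|\}$, which in deep renormalization is comparable to $b$. The remaining terms ($i<k-1$) in both expansions carry a product of at least one factor $Df_L$, so are dominated by a geometric series in $\nu_f$ times a bound of order $|T_{0,L}|$ for $\|\partial f_L/\partial\eta_L\|$; since the dissipation constant $\nu_f=\nu_{\underline{\mathcal R}^n\underline g}$ tends to $0$ along the renormalization orbit (each first return map composes at least three branches), these remainders are negligible, and they moreover agree between the two expansions up to a bounded error. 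The main obstacle is exactly this last step: obtaining the two-sided estimate $\asymp b$ --- rather than just an upper bound --- requires simultaneously (i) identifying the two leading terms as essentially the same functional, which rests on the geometric fact that both relevant orbit points limit onto $f_L^{-1}(0)$, and (ii) a uniform control of the telescoping remainders, for which one uses $\nu_f\to 0$ together with the convergence of renormalization to affine maps. The case $\sigma_f=+$ is handled symmetrically and omitted.
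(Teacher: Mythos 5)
Your proposal is correct and follows essentially the same route as the paper: the same quotient-rule formulas for $\tilde\alpha,\tilde\beta,\tilde b$, the reduction of the first four entries to the $A_{\underline f}$-matrix computation, the observation that $f_R(b)$ (hence $0_{k+2}^-$) is independent of $\eta_R$, and the identification of $\partial(0_{k+1}^+)/\partial\eta_L$ and $\partial(0_{k+2}^-)/\partial\eta_L$ as nearly the same functional of norm $\asymp b$ via the estimate~(\ref{estimate for d phi d eta}); your convex-combination bookkeeping is just a repackaging of the paper's explicit cancellation in~(\ref{d b tilde d eta_L second step}). If anything you are more explicit than the paper about the lower-order terms in the expansion of $\partial_{\eta_L}(f_L^k)$ coming from~(\ref{derivative of fn+1}), which the paper's display~(\ref{d 0k+1 d eta_L}) silently drops.
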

\begin{proof}
From (\ref{renormalized coordinates case -}) the expressions of the partial derivatives of $\tilde{\alpha}$, $\tilde{\beta}$ and $\tilde{b}$ are given by

\begin{equation}
\label{partial derivatives of bab resp lr}
\begin{array}{ccl}
\displaystyle \frac{\partial }{\partial *} \tilde{\alpha }    & = & \displaystyle \frac{1}{(0_{k+1}^{+})^2} \cdot \left\{ 0_{k+1}^{+} \cdot \displaystyle \frac{\partial }{\partial *} \big( f_L^k \circ f_R \circ f_L (0_{k+1}^{+}) \big) - 0_{k+1}^{+} \cdot  \displaystyle \frac{\partial }{\partial *} \big( 0_{k+2}^{-} \big) \right. \\
&& \\
&& \hspace{1.5cm} \left. - \displaystyle \big[ f_L^k \circ f_R \circ f_L (0_{k+1}^{+}) - 0_{k+2}^{-} \big] \cdot  \displaystyle \frac{\partial }{\partial *} \big( 0_{k+1}^{+} \big)   \right\} \\
&& \\
\displaystyle \frac{\partial }{\partial *} \tilde{\beta }    & = & \displaystyle \frac{1}{(0_{k+2}^{-})^2} \cdot \left\{ 0_{k+2}^{-} \cdot \displaystyle \frac{\partial }{\partial *} \big( f_L^k \circ f_R (0_{k+2}^{-}) \big) - 0_{k+2}^{-} \cdot  \displaystyle \frac{\partial }{\partial *} \big( 0_{k+1}^{+} \big) \right. \\
&& \\
&& \hspace{1.5cm} \left. - \displaystyle \big[ f_L^k \circ f_R (0_{k+2}^{-}) - 0_{k+1}^{+} \big] \cdot  \displaystyle \frac{\partial }{\partial *} \big( 0_{k+2}^{-} \big)   \right\} \\
&& \\
\displaystyle \frac{\partial }{\partial *} \tilde{b}    & = & (1-\tilde{b}) \cdot |I'|^{-1} \cdot \displaystyle \frac{\partial }{\partial *} \big( f_L^k \circ f_R(b) \big) + 
|I'|^{-1} \cdot \tilde{b} \cdot \displaystyle \frac{\partial }{\partial *} \big( f_L^k (b-1) \big)\\
&& \\
\end{array}    
\end{equation}
where $* \in \{ \eta_L, \eta_R \}$.  With similar arguments used in the proof of Lemma~\ref{lemma partial derivatives of abb tilde respec to abb} we can prove that 
\[
\displaystyle \frac{\partial \tilde{\alpha}}{\partial \eta_L}, \;\; \frac{\partial \tilde{\alpha}}{\partial \eta_R}, \;\; \frac{\partial \tilde{\beta}}{\partial \eta_L}, \;\; \frac{\partial \tilde{\beta}}{\partial \eta_R}
\]
are as small as we want.

Now let us estimate
\[
\displaystyle \frac{\partial }{\partial \eta_L} \tilde{b} \hspace{1cm} \mbox{and} \hspace{1cm}  \frac{\partial }{\partial \eta_R} \tilde{b}. 
\]
%are both very big. In order to prove this the %first thing we need to take into account is that %in 
Observe that at deep levels of renormalization the diffeomorphic parts $\varphi_L$ and $\varphi_R$ are very close to the identity function, so we can assume that

\[
\varphi_L(x) = x + o(\epsilon), \hspace{1cm} \varphi_R(x) = x + o(\epsilon)
\]
where $\epsilon>0$ is arbitrarily small. With some manipulations, we get from (\ref{partial derivatives of bab resp lr})

\begin{equation}
\label{d b tilde d eta_L}
    \begin{array}{ccl}
     \displaystyle \frac{\partial }{\partial \eta_L} \tilde{b}    &  = & \displaystyle \frac{1}{|I'|^2} \cdot \left\{ 0_{k+2}^{-} \cdot \frac{\partial }{\partial \eta_L} \big( 0_{k+1}^{+} \big) -0_{k+1}^{+} \cdot \frac{\partial }{\partial \eta_L} \big( 0_{k+2}^{-}\big)    \right\}.\\
    \end{array}
\end{equation}

Let us analyze each term inside the braces separately. Since $$0_{k+1}^{+} = f_L^k(b-1) = f_L (f_L^{k-1}(b-1))\mbox{ and }f_L = 1_{T_{0,L}} \circ \varphi_L \circ 1_{I_{0,L}}^{-1}$$ we obtain

\begin{equation}
\label{d 0k+1 d eta_L}
\begin{array}{ccl}
\displaystyle \frac{\partial }{\partial \eta_L} \big( 0_{k+1}^{+} \big)     &  = & \displaystyle \frac{\partial }{\partial \eta_L} \big(  f_L (f_L^{k-1}(b-1)) \big)\\
     & & \\
     & = &  D 1_{T_{0,L}} \circ \big( \varphi_L \circ  1_{I_{0,L}}^{-1} \circ f_L^{k-1}(b-1) \big) \cdot \displaystyle \frac{\partial }{\partial \eta_L} \big(  \varphi_L \circ  1_{I_{0,L}}^{-1} (f_L^{k-1}(b-1)) \big)\\
     & & \\
     & \asymp & |T_{0,L}| \cdot \min\{ \varphi_L \circ  1_{I_{0,L}}^{-1} (f_L^{k-1}(b-1)), 1- \varphi_L \circ  1_{I_{0,L}}^{-1} (f_L^{k-1}(b-1)) \} \\
     & & \\ 
      & \asymp & |T_{0,L}| \cdot \min\{ 1_{I_{0,L}}^{-1} (f_L^{k-1}(b-1)), 1- 1_{I_{0,L}}^{-1} (f_L^{k-1}(b-1)) \} \\
     & & \\ 
     & = & |T_{0,L}| \cdot \big(1-1_{I_{0,L}}^{-1} (f_L^{k-1}(b-1)) \big). \\
\end{array}    
\end{equation}
By using analogous arguments we get

\begin{equation}
\label{d 0k+2 d eta_L}
\begin{array}{ccl}
     \displaystyle \frac{\partial }{\partial \eta_L} \big( 0_{k+2}^{-} \big)     &  = & \displaystyle \frac{\partial }{\partial \eta_L} \big(  f_L (f_L^{k-1}(f_R(b))) \big)\\
     & & \\
 & \asymp & |T_{0,L}| \cdot \big(1-1_{I_{0,L}}^{-1} (f_L^{k-1}(f_R(b))) \big). \\
\end{array}    
\end{equation}
Substituting (\ref{d 0k+2 d eta_L}) and (\ref{d 0k+1 d eta_L}) into (\ref{d b tilde d eta_L}) we get

\begin{equation}
\label{d b tilde d eta_L second step}
\begin{array}{ccl}
    \displaystyle \frac{\partial }{\partial \eta_L} \tilde{b}    &  \asymp & \displaystyle \frac{|T_{0,L}|}{|I'|^2} \cdot \left\{ 0_{k+2}^{-} \cdot \big(1-1_{I_{0,L}}^{-1} (f_L^{k-1}(b-1)) \big) -0_{k+1}^{+} \cdot \big(1-1_{I_{0,L}}^{-1} (f_L^{k-1}(f_R(b))) \big)   \right\}.\\ & & \\
    & = & \displaystyle \frac{|T_{0,L}|}{|I'|^2} \cdot \left[ 0_{k+2}^{-} - 0_{k+1}^{+}  \right]  + \displaystyle \frac{|T_{0,L}|}{|I'|^2} \cdot 0_{k+1}^{+} \cdot 1_{I_{0,L}}^{-1} (f_L^{k-1}(f_R(b))) \\
    & & \\
    & & - \displaystyle \frac{ |T_{0,L}| }{|I'|^2} \cdot 0_{k+2}^{-} \cdot 1_{I_{0,L}}^{-1} (f_L^{k-1}(b-1)).  \\
\end{array}    
\end{equation}
Since the size of the renormalization interval $I'$ goes to zero when the level of renormalization goes to infinity we can assume that $b-0_{k+2}^{-} \asymp b$ and then we have

\begin{equation}
|0_{k+1}^{-}|=0-f_L^{k-1}(f_R(b)) = \frac{b-0_{k+2}^{-}}{Df_L(c_1)} \asymp \frac{b}{Df_L(c_1)} \asymp b \cdot \frac{|I_{0,L}|}{|T_{0,L}|},
\end{equation}
where we use the assumption that 

\begin{equation}
\label{approx to Df_L}
\left. \begin{array}{ccl}
     f_L & = & 1_{T_{0,L}} \circ \varphi_L \circ 1_{I_{0,L}}^{-1}  \\
     & & \\
     \varphi_L & \approx & \mbox{identity function} \\ 
\end{array} \right\}    \Rightarrow Df_L = \frac{|T_{0,L}|}{|I_{0,L}|} \cdot D \varphi_L \asymp \frac{|T_{0,L}|}{|I_{0,L}|}.
\end{equation}
By using the approximation (\ref{approx to Df_L}) we have

\begin{equation}
\label{approx to 0k}
|0_{k}^{+}|=0-f_L^{k-1}(b-1) = \frac{b-0_{k+1}^{+}}{Df_L(c_2)} \asymp  (b-0_{k+1}^{+}) \cdot \frac{|I_{0,L}|}{|T_{0,L}|}.
\end{equation}
Using (\ref{approx to 0k}), (\ref{approx to Df_L}) and the definition of the affine map $1_{I_{0,L}}^{-1}$ by (\ref{d b tilde d eta_L second step}) we obtain

\begin{equation}
\label{eqn:72}
    \begin{array}{ccl}
    \displaystyle \frac{\partial }{\partial \eta_L} \tilde{b}     &  \asymp &  \displaystyle \frac{-b}{|I'|} + \frac{0_{k+1}^{+} \cdot 0_{k+2}^{-}}{|I'|^2}. \\
    \end{array}
\end{equation}
Since $I'= [0_{k+1}^{+}, 0_{k+2}^{-}]$, and $|I'| \leq \alpha \cdot \beta \cdot b$ for all $k \geq 1$ we can conclude that $\displaystyle \frac{0_{k+1}^{+} \cdot 0_{k+2}^{-}}{|I'|^2}$
%\[
%\frac{0_{k+1}^{+} \cdot 0_{k+2}^{-}}{|I'|^2}
%\]
is bounded and thus
$\displaystyle  \big| \frac{\partial }{\partial \eta_L} \tilde{b} \big|
 \asymp \frac{-b}{|I'|}.$
%\[
%\displaystyle \big| \frac{\partial }{\partial %\eta_L} \tilde{b} \big|  
%\]
For the derivative of $\tilde{b}$ with respect to $\eta_R$ we start by noting that $0_{k+1}^{+} = f_L^{k}(b-1)$ does not depend on $\eta_R$. Hence, with similar arguments used to get (\ref{d b tilde d eta_L}) we obtain

\begin{equation}
\label{d b tilde d eta_R}
    \begin{array}{ccl}
     \displaystyle \frac{\partial }{\partial \eta_R} \tilde{b}    &  = & \displaystyle \frac{1}{|I'|^2} \cdot \left\{ 0_{k+2}^{-} \cdot \frac{\partial }{\partial \eta_R} \big( 0_{k+1}^{+} \big) -0_{k+1}^{+} \cdot \frac{\partial }{\partial \eta_R} \big( 0_{k+2}^{-}\big)    \right\} \\
     & & \\
     & = & \displaystyle  \frac{-0_{k+1}^{+}}{|I'|^2} \cdot Df_L^k \circ f_R (b) \cdot \frac{\partial }{\partial \eta_R} 
     \big( f_R(b) \big). \\
    \end{array}
\end{equation}
Since $\displaystyle f_R = 1_{T_{o,R}} \circ \varphi_R \circ 1_{I_{0,R}}^{-1}$ and the point $1_{I_{0,R}}^{-1}(b)$ is always fixed by any $\varphi_R \in \mbox{Diff}_{+}^{3}[0,1]$ we obtain

\[
\displaystyle \frac{\partial }{\partial \eta_R} \big( \varphi_R \circ 1_{I_{0,R}}^{-1}(b) \big) =0
\]
and then

\[
\displaystyle \frac{\partial }{\partial \eta_R} 
    \big( f_R(b) \big) = D1_{T_{0,R}} \circ \big( \varphi_R \circ 1_{I_{0,R}}^{-1}(b) \big)  \cdot \frac{\partial }{\partial \eta_R} \big( \varphi_R \circ 1_{I_{0,R}}^{-1}(b) \big) = 0
\]
which implies in 
\[
\displaystyle \frac{\partial }{\partial \eta_R} \tilde{b}=0
\]
as desired.
%By similar arguments we have that $\displaystyle \big| \frac{\partial }{\partial \eta_R} \tilde{b} \big|\asymp \frac{-b}{|I'|}.$
%\[
%\displaystyle \big| \frac{\partial }{\partial %\eta_R} \tilde{b} \big|  
%\]
%is arbitrarily large too.
%\[
%g(x) \asymp y
%\]
\end{proof}

\subsection{The $C_{\underline{f}}$ matrix}

\begin{equation}
\label{partial derivatives C for sigma -}
C_{\underline{f}} = \left( \begin{array}{ccc}
\displaystyle \frac{\partial \tilde{\eta}_L}{\partial \alpha }     & \displaystyle \frac{\partial \tilde{\eta}_L}{\partial \beta } & \displaystyle \frac{\partial \tilde{\eta}_L}{\partial b } \\
& & \\
 \displaystyle \frac{\partial \tilde{\eta}_R}{\partial \alpha }     & \displaystyle \frac{\partial \tilde{\eta}_R}{\partial \beta } & \displaystyle \frac{\partial \tilde{\eta}_R}{\partial b }
\end{array}           \right),
\end{equation}

\begin{lemma}
\label{lemma for Cmatrix}
Let $ \underline{f} \in \underline{\mathcal D}_0 $. The maps

\begin{equation}
\begin{array}{ccl}
    (0,1)^3 \ni (\alpha, \beta, b) & \mapsto &  \tilde{\eta}_L \in {\mathcal C}^{1}([0, 1]) \\
    & & \\
{(0, 1)^3} \ni (\alpha, \beta, b) & \mapsto &  \tilde{\eta}_R \in  {\mathcal C}^{1}([0, 1]) \\ 
\end{array}    
\end{equation}
are differentiable and the partial derivatives are bounded. Furthermore, 
for any $\varepsilon>0$, if $\underline g\in\underline{\mathcal{D}}_0$ is an infinitely renormalizable mapping, there exists $N\in\mathbb n_0$ so that if $n\geq n_0$ and $\underline f=\underline{\mathcal{R}}^n \underline g$, 
we have that $
\displaystyle \Big| \frac{\partial \tilde{\eta}_L}{\partial \beta}\Big|$ and $\displaystyle\Big| \frac{\partial \tilde{\eta}_R}{\partial \beta}\Big|<\varepsilon
$, when $\sigma_f=-$,
and when $\sigma_f=+$ we have that
$
\displaystyle \Big|\frac{\partial \tilde{\eta}_L}{\partial \alpha}\Big|$
and $\displaystyle\Big|\frac{\partial \tilde{\eta}_R}{\partial \alpha}\Big|<\varepsilon.$
\end{lemma}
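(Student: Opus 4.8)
The plan is to work directly with the expressions from~(\ref{tilde eta_L and eta_R}), namely $\tilde\eta_L=|0_{k+1}^{+}|\cdot N(f_L^k\circ f_R\circ f_L)\circ 1_{[0_{k+1}^+,0]}^{-1}$ and $\tilde\eta_R=|0_{k+2}^{-}|\cdot N(f_L^k\circ f_R)\circ 1_{[0,0_{k+2}^-]}^{-1}$ (in the case $\sigma_f=-$; the case $\sigma_f=+$ is obtained by interchanging $f_L$ and $f_R$). Each of these is assembled from three differentiable ingredients: the boundary operators $0_{k+1}^+,0_{k+2}^-$, which are differentiable in $(\alpha,\beta,b)$ with bounded partial derivatives by Lemma~\ref{Lemma with formulas}; the composed branch $\tilde f_L$ (resp.\ $\tilde f_R$) and its nonlinearity, which I would expand using the iterate formula $Nf^k(x)=\sum_{i=0}^{k-1}Nf(f^i(x))|Df^i(x)|$ and the cocycle relation $N(g\circ h)=(Ng\circ h)\cdot h'+Nh$, with parameter dependence handled by the $\partial$-calculus (Lemma~\ref{lemma partial operator}, formulas~(\ref{derivative of f o g})--(\ref{derivative of fn+1})) and the explicit expressions for $\partial f_L/\partial *$, $\partial f_R/\partial *$ in Lemma~\ref{lemma derivatives of f_L and f_R}; and the affine rescalings $1_{[\cdot]}^{-1}$, which depend on the parameters only through the boundary operators. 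Composing, one gets differentiability of $(\alpha,\beta,b)\mapsto\tilde\eta_L$ and $(\alpha,\beta,b)\mapsto\tilde\eta_R$, exactly as in the proofs of Lemmas~\ref{lemma partial derivatives of abb tilde respec to abb} and~\ref{lemma entries of B_f matrix}.

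For the boundedness claim I would apply $\partial_*$, $*\in\{\alpha,\beta,b\}$, and split the outcome into three groups: (i) a \emph{prefactor} term $\partial_*|0_{k+1}^+|\cdot N(\tilde f_L)\circ 1_{[0_{k+1}^+,0]}^{-1}$; (ii) an \emph{inner} term $|0_{k+1}^+|\cdot(\partial_*N(\tilde f_L))\circ 1_{[0_{k+1}^+,0]}^{-1}$; and (iii) a \emph{rescaling} term $|0_{k+1}^+|\cdot DN(\tilde f_L)\circ 1_{[0_{k+1}^+,0]}^{-1}\cdot\partial_*(1_{[0_{k+1}^+,0]}^{-1})$, and likewise for $\tilde\eta_R$. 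In~(i), $\partial_*|0_{k+1}^+|$ is bounded by Lemma~\ref{Lemma with formulas} and $N(\tilde f_L)$ is bounded because, after expanding, every summand carries a factor $|Df_L^j|\le\nu^j$ with $\nu=\nu_f<1$, so the $k$-fold sums converge with a bound independent of $k$ (in fact they tend to $0$ at deep levels, since $\nu_f\to0$ and $\|Nf_L\|,\|Nf_R\|\to0$). In~(ii), the same geometric decay together with Lemma~\ref{lemma partial operator} and the boundedness of $\partial f_L/\partial *$, $\partial f_R/\partial *$ gives a bound on $\partial_*N(\tilde f_L)$ uniform in $k$. In~(iii), $\partial_*(1_{[0_{k+1}^+,0]}^{-1})$ contains a factor $1/|0_{k+1}^+|$, but it is multiplied by the explicit prefactor $|0_{k+1}^+|$, so this blow-up cancels. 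To control the $\mathcal C^1$-norm one carries one additional spatial derivative, using that $D\tilde\eta_L=DN(\tilde f_L)\circ 1_{[0_{k+1}^+,0]}^{-1}$ (the prefactor now drops) together with boundedness of $D^2f$ and $D^3f$; variations of the diffeomorphic parts are treated via Lemma~\ref{lemma d phi d eta} and Corollary~\ref{corollary d fgh dg}.

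The smallness of the remaining entries is the structurally new point. When $\sigma_f=-$, the parameter $\beta$ enters $f$ only through $f_R$, since $\partial f_L/\partial\beta\equiv0$ by Lemma~\ref{lemma derivatives of f_L and f_R}, and $f_R$ occurs exactly once in each of $\tilde f_L=f_L^k\circ f_R\circ f_L$ and $\tilde f_R=f_L^k\circ f_R$, followed by the strongly contracting block $f_L^k$. Moreover $0_{k+1}^+=f_L^k(b-1)$ does not depend on $\beta$, so for $\tilde\eta_L$ both term~(i) and term~(iii) vanish identically and $\partial_\beta\tilde\eta_L=|0_{k+1}^+|\cdot(\partial_\beta N(\tilde f_L))\circ 1_{[0_{k+1}^+,0]}^{-1}=O(|0_{k+1}^+|)$, which is $<\varepsilon$ once $|I'|$ is small enough, i.e.\ once $n$ is large. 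For $\tilde\eta_R$ the point $0_{k+2}^-=f_L^k\circ f_R(b)$ does depend on $\beta$, but $\partial_\beta 0_{k+2}^-=Df_L^k(f_R(b))\cdot\partial_\beta f_R(b)=O(\nu^k)$, so terms~(i) and~(iii) are also exponentially small in $k$ while term~(ii) carries the small prefactor; hence $|\partial_\beta\tilde\eta_R|<\varepsilon$ for $n$ large. The case $\sigma_f=+$ is symmetric under $f_L\leftrightarrow f_R$ and $\alpha\leftrightarrow\beta$. I expect the main obstacle to be the $\mathcal C^1$-part of~(i)--(iii): the zoom operator injects the factors $1/|0_{k+1}^+|$ and $1/|0_{k+2}^-|$, and since there is an intrinsic loss of one derivative in renormalization, one must show these factors are always absorbed by the explicit prefactors and by the smallness of the nonlinearities of deeply renormalized maps (convergence of renormalization to affine mappings) rather than by any gain of regularity; the rest is careful but routine bookkeeping with geometric series.
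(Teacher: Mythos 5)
Your proposal takes essentially the same route as the paper. The paper factors $\partial_*\tilde\eta_L$ through the zoom operator, writing
$$\frac{\partial\tilde\eta_L}{\partial *}=\frac{\partial}{\partial 0_{k+1}^+}\bigl(Z_{[0_{k+1}^+,0]}\eta_{\tilde f_L}\bigr)\cdot\frac{\partial 0_{k+1}^+}{\partial *}+\frac{\partial}{\partial\eta_{\tilde f_L}}\bigl(Z_{[0_{k+1}^+,0]}\eta_{\tilde f_L}\bigr)\cdot\frac{\partial\eta_{\tilde f_L}}{\partial *},$$
and then appeals to Lemma~\ref{lemma Zoom derivatives} ($\|\partial_\eta(Z_I\eta)\|=|I|$ and the boundary-slot derivative bounded), Lemma~\ref{Lemma with formulas}, Lemma~\ref{derivatives of compositions with nonlinearity}, and Corollary~\ref{cor:bdd N}. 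Your three-term split (prefactor / inner / rescaling) is precisely this two-term chain rule with the boundary-slot contribution unpacked into its two product-rule pieces, and the ingredients you invoke --- boundedness of $\partial_* 0^{\pm}_j$, geometric decay $\|Df_L^j\|\le\nu^j$ to bound $N(\tilde f_L)$, $N(\tilde f_R)$ and their parameter derivatives, the smallness factor $|0_{k+1}^+|$ (resp.\ $|0_{k+2}^-|$) multiplying the inner term, and the keystone observations $\partial_\beta 0_{k+1}^+=0$, $\partial_\beta 0_{k+2}^-=b\,Df_L^k\circ f_R(b)=O(\nu^k)$ --- are exactly the ones the paper uses. If anything your version is slightly more explicit in spelling out why the $\tilde\eta_R$ estimate is small (both the boundary piece and the inner piece go to zero), which the paper leaves partly implicit.

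One small slip that does not affect the outcome: in the $\mathcal C^1$-estimate you claim $D\tilde\eta_L=DN(\tilde f_L)\circ 1^{-1}_{[0_{k+1}^+,0]}$ with the prefactor dropping. With the paper's actual convention $Z_I\eta=|I|\cdot\eta\circ 1_I$ (the $1^{-1}$ in equation~(\ref{tilde eta_L and eta_R}) is a typo; $1_I$ carries $[0,1]\to I$), one gets $D\tilde\eta_L=|0_{k+1}^+|^2\,D\eta_{\tilde f_L}\circ 1_I$, so the prefactor squares rather than drops. Since $D\eta_{\tilde f_L}$ is bounded via $D(Nf)=Sf+\tfrac12(Nf)^2$, this is even smaller than you claimed, so the estimate goes through regardless.
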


\begin{comment}

\begin{lemma}
\label{lemma entries of C matrix}
Let $\underline{f} \in \underline{\mathcal D}_0$. The maps

\begin{equation}
\begin{array}{ccl}
     (0, 1)^3 \ni (\alpha, \beta, b) & \mapsto & \tilde{\eta}_L \in {\mathcal C}^1([0, 1]) \\
     & & \\
      (0, 1)^3 \ni (\alpha, \beta, b) & \mapsto & \tilde{\eta}_R \in {\mathcal C}^1([0, 1]) \\
\end{array}    
\end{equation}
are differentiable. Furthermore, all partial derivatives are bounded.
\end{lemma}

\end{comment}

We will require some preliminary results before proving this lemma.
For the next calculations we deal only with the case $\sigma_f=-$, since case $\sigma_f=+$ is analogous. From (\ref{tilde eta_L and eta_R}) the partial derivatives of $\tilde{\eta}_L$ with respect to $\alpha$, $\beta$ and $b$ are given by

\begin{equation}
\begin{array}{ccl}
    \displaystyle \frac{\partial \tilde{\eta}_L}{\partial \alpha} & = & 
    \displaystyle \frac{\partial }{\partial \alpha} \left( Z_{[0_{k+1}^+,0]} \eta_{\tilde{f}_L} \right) \\
    && \\
    & = & \displaystyle \frac{\partial }{\partial  0_{k+1}^+} \left( Z_{[0_{k+1}^+,0]} \eta_{\tilde{f}_L} \right) \cdot \displaystyle \frac{\partial }{\partial \alpha} \left( 0_{k+1}^+ \right)
      + \displaystyle \frac{\partial }{\partial \eta_{\tilde{f}_L}} \left( Z_{[0_{k+1}^+,0]} \eta_{\tilde{f}_L} \right) \cdot \displaystyle \frac{\partial }{\partial \alpha} \left( \eta_{\tilde{f}_L} \right)
\end{array}
\end{equation}

\begin{equation}
\begin{array}{ccl}
    \displaystyle \frac{\partial \tilde{\eta}_L}{\partial \beta} & = & 
    \displaystyle \frac{\partial }{\partial \beta} \left( Z_{[0_{k+1}^+,0]} \eta_{\tilde{f}_L} \right) \\
    && \\
    & = & \displaystyle \frac{\partial }{\partial  0_{k+1}^+} \left( Z_{[0_{k+1}^+,0]} \eta_{\tilde{f}_L} \right) \cdot \displaystyle \frac{\partial }{\partial \beta} \left( 0_{k+1}^+ \right)
      + \displaystyle \frac{\partial }{\partial \eta_{\tilde{f}_L}} \left( Z_{[0_{k+1}^+,0]} \eta_{\tilde{f}_L} \right) \cdot \displaystyle \frac{\partial }{\partial \beta} \left( \eta_{\tilde{f}_L} \right)
\end{array}
\end{equation}

\begin{equation}
\begin{array}{ccl}
    \displaystyle \frac{\partial \tilde{\eta}_L}{\partial b} & = & 
    \displaystyle \frac{\partial }{\partial b} \left( Z_{[0_{k+1}^+,0]} \eta_{\tilde{f}_L} \right) \\
    && \\
    & = & \displaystyle \frac{\partial }{\partial  0_{k+1}^+} \left( Z_{[0_{k+1}^+,0]} \eta_{\tilde{f}_L} \right) \cdot \displaystyle \frac{\partial }{\partial b} \left( 0_{k+1}^+ \right)
      + \displaystyle \frac{\partial }{\partial \eta_{\tilde{f}_L}} \left( Z_{[0_{k+1}^+,0]} \eta_{\tilde{f}_L} \right) \cdot \displaystyle \frac{\partial }{\partial b} \left( \eta_{\tilde{f}_L} \right)
\end{array}
\end{equation}
We have similar expressions for the partial derivatives of $\tilde{\eta}_R$ with respect to $\alpha, \beta$ and $b$; however, we omit them at this point. 

\smallskip

In order to prove that all the six entries of $C_{\underline{f}}$ matrix are bounded we need to analyze the terms

\[
\displaystyle \frac{\partial }{\partial  0_{k+1}^+} \left( Z_{[0_{k+1}^+,0]} \eta_{\tilde{f}_L} \right), \;\;  
\displaystyle \frac{\partial }{\partial \eta_{\tilde{f}_L}} \left( Z_{[0_{k+1}^+,0]} \eta_{\tilde{f}_L} \right), \;\; \frac{\partial }{\partial *} \left( 0_{k+1}^+ \right), \;\; \displaystyle \frac{\partial }{\partial *} \left( \eta_{\tilde{f}_L} \right) 
\]
with $* \in \{ \alpha, \beta, b\}$ for $\tilde{\eta}_L$, and the corresponding ones for $\tilde{\eta}_R$. This analysis will be done in the following lemmas.

\begin{lemma}\cite[Lemma 8.20]{MP}
\label{lemma Zoom derivatives}
Let $\varphi \in \mbox{\em Diff}_{+}^{3}([0,1])$. The zoom curve $Z:[0,1]^2 \ni (a,b) \mapsto Z_{[a,b]} \varphi \in \mbox{ \em Diff}_{+}^2([0,1])$ is differentiable with partial derivatives given by

\begin{equation}
\label{derivative of left boundary for Z}
\begin{array}{ccl}
    \displaystyle \frac{\partial Z_{[a,b]} \varphi }{\partial a} & = & 
    (b-a)(1-x)D \eta ((b-a)x+a)-\eta ((b-a)x+a)\\
    && \\
     \displaystyle \frac{\partial Z_{[a,b]} \varphi }{\partial b} & = & 
     (b-a)xD \eta ((b-a)x+a)+\eta ((b-a)x+a).
\end{array}
\end{equation}
The norms are bounded by
\begin{equation}
\label{norms of derivatives of Z} 
\big| \displaystyle \frac{\partial Z_{[a,b]} \varphi }{\partial a} \big|_{2}, \big| \displaystyle \frac{\partial Z_{[a,b]} \varphi }{\partial b} \big|_{2} \leq 2|\varphi|_3.
\end{equation}
Furthermore, by considering a fixed interval $I \subset [0,1]$, the zoom operator
\begin{equation}
\begin{array}{cccll}
    Z_I & : & \mathcal{C}^1([0,1])& \rightarrow & \mathcal{C}^1([0,1]) \\
        &   & \varphi & \mapsto & Z_I \varphi, 
\end{array}    
\end{equation}
where $Z_I \varphi (x)$ is defined in (\ref{zoom operator law}), is differentiable with respect to $\eta$ and its derivative is given by

\[
\displaystyle \frac{\partial }{\partial \varphi } \left( Z_I \varphi \right) (\Delta g) = |I| \cdot \Delta g \circ 1_I,
\]
and its norm is given by
\[
\big| \big|\displaystyle \frac{\partial }{\partial \varphi } \left( Z_I \varphi \right) \big| \big| = |I|.
\]
\end{lemma}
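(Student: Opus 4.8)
The plan is to reduce everything to a direct computation. By the definition (\ref{zoom operator law}) of the zoom operator on nonlinearities, and the identification of $\mathrm{Diff}_+^2([0,1])$ with $\mathcal C^0([0,1])$ via $N$, the zoom curve is
\[
Z_{[a,b]}\varphi(x)=(b-a)\,\eta_\varphi\big((b-a)x+a\big),
\]
where $\eta_\varphi=N\varphi\in\mathcal C^1([0,1])$ since $\varphi\in\mathrm{Diff}_+^3([0,1])$. First I would invoke the standard fact that if $F\colon[0,1]^2\times[0,1]\to\mathbb R$, $F(a,b,x)=(b-a)\,\eta_\varphi((b-a)x+a)$, is jointly $\mathcal C^1$ — which it is, as $\eta_\varphi\in\mathcal C^1$ and the inner expression is polynomial in $(a,b,x)$ — then the curve $(a,b)\mapsto F(a,b,\cdot)$ is of class $\mathcal C^1$ as a map into the Banach space $\mathcal C^0([0,1])$, and its partial derivatives in $a$ and $b$ are obtained by differentiating $F$ in the corresponding slot and viewing the result as a function of $x$. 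This turns the first claim into an application of the chain and product rules.

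Carrying this out with $\partial_a\big((b-a)x+a\big)=1-x$ and $\partial_b\big((b-a)x+a\big)=x$ produces precisely
\[
\frac{\partial Z_{[a,b]}\varphi}{\partial a}(x)=(b-a)(1-x)\,D\eta_\varphi\big((b-a)x+a\big)-\eta_\varphi\big((b-a)x+a\big),
\]
\[
\frac{\partial Z_{[a,b]}\varphi}{\partial b}(x)=(b-a)x\,D\eta_\varphi\big((b-a)x+a\big)+\eta_\varphi\big((b-a)x+a\big),
\]
the stated formulas. For the norm bounds one only needs $0\le b-a\le 1$, $x,1-x\in[0,1]$, and that the argument $(b-a)x+a$ ranges in $[a,b]\subseteq[0,1]$: then each of the two summands on the right is bounded in $\mathcal C^0$ by $\|D\eta_\varphi\|_{\mathcal C^0}$ or $\|\eta_\varphi\|_{\mathcal C^0}$, so the total is at most $2\|\eta_\varphi\|_{\mathcal C^1}\le 2|\varphi|_3$.

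For the last assertion, fix the interval $I$ and regard $Z_I$ as acting on nonlinearities by $Z_I\eta=|I|\,\eta\circ 1_I$, as in (\ref{zoom operator law}). This map is \emph{linear} in $\eta$, hence Fr\'echet differentiable with derivative equal to itself: $\frac{\partial}{\partial\varphi}(Z_I\varphi)(\Delta g)=|I|\,\Delta g\circ 1_I$. To evaluate the operator norm, note that $1_I'\equiv|I|$, so $\big\|\,|I|\,\Delta g\circ 1_I\,\big\|_{\mathcal C^1}=\max\{\,|I|\sup_I|\Delta g|,\ |I|^2\sup_I|D\Delta g|\,\}$, which since $|I|\le 1$ is $\le|I|\,\|\Delta g\|_{\mathcal C^1}$; equality holds on the constant function $\Delta g\equiv1$, for which $Z_I(\Delta g)\equiv|I|$. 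Hence the norm is exactly $|I|$.

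I do not anticipate a serious obstacle: the only point requiring care is the passage from the pointwise (Gateaux) computation to genuine Fr\'echet differentiability of the zoom curve as a map into an infinite-dimensional Banach space, which is exactly where the standing hypothesis $\varphi\in\mathrm{Diff}_+^3$ is used — it guarantees $D\eta_\varphi$ is continuous, so the remainder in the first-order expansion is small in the $\mathcal C^0$-norm uniformly in $x$. This is the content of \cite[Lemma 8.20]{MP}, reproduced here for the reader's convenience.
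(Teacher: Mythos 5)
The paper does not prove this lemma; it cites it as \cite[Lemma 8.20]{MP}, so there is no in-paper argument to compare against. Your direct computation is correct: the zoom curve $(a,b)\mapsto (b-a)\,\eta_\varphi((b-a)x+a)$ is differentiated in $a$ and $b$ by the product and chain rules, the $\mathcal C^1$ regularity of $\eta_\varphi$ (coming from $\varphi\in\mathrm{Diff}^3_+$) upgrades the pointwise Gateaux computation to Fr\'echet differentiability via uniform continuity of $D\eta_\varphi$ on the compact domain, the two-term bound by $2\|\eta_\varphi\|_{\mathcal C^1}$ is immediate, and for fixed $I$ the map $\eta\mapsto |I|\,\eta\circ 1_I$ is linear so equals its own derivative, with operator norm $|I|$ attained on a constant function because $D1_I\equiv |I|\leq 1$. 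This is the natural and (as far as I can tell) intended argument behind the cited statement.
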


Since the nonlinearity of affine maps is zero it is not difficult to check that the nonlinearity of the branches $f_L = 1_{T_{0,L}} \circ \varphi_L \circ 1_{I_{0,L}}^{-1}$ and $f_R = 1_{T_{0,R}} \circ \varphi_R \circ 1_{I_{0,R}}^{-1}$ are 

\begin{equation}
\label{nonlinearity of branches f_L and f_R}
\begin{array}{ccl}
     N f_L & = & \displaystyle \frac{1}{|I_{0,L}|} \cdot N \varphi_L \circ 1_{I_{0,L}}^{-1}, \\
     && \\
     N f_R & = & \displaystyle \frac{1}{|I_{0,R}|} \cdot N \varphi_R \circ 1_{I_{0,R}}^{-1}.
\end{array}
\end{equation}
Hence we note that $N f_L$ depends only on $b$ and $\varphi_L$ while $Nf_R$ depends only on $b$ and $\varphi_R$. Thus, we can derive $N f_L$ with respect to $b$ and $\varphi_L$, and we can derive $N f_R$ with respect to $b$ and $\varphi_R$. This is treated in the next result.

\begin{lemma}
\label{derivatives of compositions with nonlinearity}
Let $\underline{f} \in \underline{\mathcal D}_0$ and let $g$ be a $\mathcal{C}^1$ function. If the partial derivatives of $g$ with respect to $\alpha$, $\beta$ and $b$ are bounded, then, 
whenever the expressions make sense,
the compositions $Nf_L \circ g (x)$ and $Nf_R \circ g(x)$ are differentiable and the corresponding partial derivatives are bounded.
\end{lemma}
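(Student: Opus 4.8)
The plan is to reduce the statement to the chain rule for the partial–derivative operator $\partial$ recorded in Lemma~\ref{lemma partial operator}, combined with the explicit formulas (\ref{nonlinearity of branches f_L and f_R}) for $Nf_L$ and $Nf_R$.

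First I would make explicit the dependence of $Nf_L$ and $Nf_R$ on the coordinates. Writing $1_{I_{0,L}}^{-1}(y)=\tfrac{y-b+1}{1-b}$ and $1_{I_{0,R}}^{-1}(y)=\tfrac{y}{b}$, formula (\ref{nonlinearity of branches f_L and f_R}) reads
\[
Nf_L(y)=\frac{1}{1-b}\,\eta_{\varphi_L}\!\Big(\frac{y-b+1}{1-b}\Big),\qquad
Nf_R(y)=\frac{1}{b}\,\eta_{\varphi_R}\!\Big(\frac{y}{b}\Big),
\]
so $Nf_L$ depends on the coordinates only through $b$ and $\varphi_L$, and $Nf_R$ only through $b$ and $\varphi_R$ (the affine parts $1_{T_{0,L}},1_{T_{0,R}}$, which carry the $\alpha,\beta$ dependence, drop out since affine maps have zero nonlinearity). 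Consequently, writing $\partial_*$ for $\partial/\partial *$, we have $\partial_\alpha(Nf_L)=\partial_\beta(Nf_L)=0$ and $\partial_\alpha(Nf_R)=\partial_\beta(Nf_R)=0$, while $\partial_b(Nf_L)$ and $\partial_b(Nf_R)$ are obtained by differentiating the two displayed expressions. Since $\eta_{\varphi_L},\eta_{\varphi_R}\in\mathcal C^1([0,1])$ with $\|\eta_{\varphi_L}\|_{\mathcal C^1}=\|\varphi_L\|<\infty$ and $\|\eta_{\varphi_R}\|_{\mathcal C^1}=\|\varphi_R\|<\infty$, and $b$ lies in a compact subinterval of $(0,1)$, I conclude that $\partial_* Nf_L$, $\partial_* Nf_R$ and the ordinary $x$–derivatives $(Nf_L)'$, $(Nf_R)'$ are all bounded, with bounds depending only on $\|\varphi_L\|$, $\|\varphi_R\|$ and $\mathrm{dist}(b,\{0,1\})$; on an absorbing set for renormalization these constants are uniform.

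Next I would apply (\ref{derivative of f o g}) with outer map $Nf_L$ (resp.\ $Nf_R$) and inner map $g$: for $*\in\{\alpha,\beta,b\}$, whenever the composition is defined,
\[
\partial_*\big(Nf_L\circ g\big)(x)=\big(\partial_* Nf_L\big)(g(x))+(Nf_L)'(g(x))\cdot\partial_* g(x),
\]
and likewise with $Nf_R$ in place of $Nf_L$. Differentiability of the composition in the parameters follows because both $g$ and $Nf_L,Nf_R$ are differentiable in $(\alpha,\beta,b)$ (the latter trivially in $\alpha,\beta$, and in $b$ by the previous step) and $g$ is $\mathcal C^1$ in $x$. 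Boundedness is then immediate: the first term is at most $\sup|\partial_* Nf_L|$, the second at most $\sup|(Nf_L)'|\cdot\sup|\partial_* g|$, and $\partial_* g$ is bounded by hypothesis. The argument for $Nf_R\circ g$ is identical.

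There is no genuine obstacle here; the only points requiring a little care are the bookkeeping — keeping track of which of $\alpha,\beta,b$ actually enter $Nf_L$ and $Nf_R$, so that no spurious derivative terms appear — and the observation that the denominators $1-b$ and $b$ stay bounded away from $0$ along a renormalization orbit, so that the constants produced in the estimates are uniform rather than merely finite for each individual $\underline f$.
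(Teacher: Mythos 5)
Your proof is correct and follows essentially the same route as the paper: both start from the formulas~(\ref{nonlinearity of branches f_L and f_R}), observe that $Nf_L$ (resp.\ $Nf_R$) depends only on $b$ and $\varphi_L$ (resp.\ $\varphi_R$), apply the chain rule~(\ref{derivative of f o g}) from Lemma~\ref{lemma partial operator} to $Nf_\star\circ g$, and then bound each resulting term. The one stylistic difference is how you justify that $(Nf_\star)'$ is bounded: you read it directly off the $\mathcal C^1$-nonlinearity norm $\|\varphi_\star\|=\|\eta_{\varphi_\star}\|_{\mathcal C^1}$ built into the decomposition space, whereas the paper deduces boundedness of $D(Nf_\star)$ from the Schwarzian identity $Sf = D(Nf)-\tfrac12(Nf)^2$ together with the standing assumption that $Sf$ and $Nf$ are bounded; these are equivalent after the rescaling $Nf_L=\tfrac{1}{1-b}\eta_{\varphi_L}\circ 1_{I_{0,L}}^{-1}$, so nothing is gained or lost.
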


\begin{proof}
From (\ref{nonlinearity of branches f_L and f_R}) and Lemma \ref{lemma partial operator} we get

\begin{equation}
\begin{array}{ccl}
   \displaystyle \frac{\partial }{\partial b} \big[ Nf_L \circ g(x)  \big]  & 
   = & \displaystyle \frac{- \frac{\partial }{\partial b}|I_{0,L}|}{|I_{0,L}|^2} \cdot N \varphi_L \circ 1_{I_{0,L}}^{-1} \circ g(x) \\
    && \\
    && + \displaystyle \frac{1}{|I_{0,L}|} \cdot D N \varphi_L \circ 1_{I_{0,L}}^{-1} \circ g(x) \cdot \displaystyle \frac{\partial }{\partial b} \big( 1_{I_{0,L}}^{-1} \circ g(x)\big). \\
\end{array}
\end{equation}
For $Nf_R \circ g(x)$ we have a similar expression for its derivative with respect to $b$ just changing $I_{0,L}$ by $I_{0,R}$ and $\varphi_L$ by $\varphi_R$. The other partial derivatives are

\begin{equation}
\begin{array}{ccl}
   \displaystyle \frac{\partial }{\partial *} \big[ Nf_L \circ g(x)  \big] & 
   = & \displaystyle DNf_L \circ g(x) \cdot
     \displaystyle \frac{\partial }{\partial *}  g(x), \\
     && \\
      \displaystyle \frac{\partial }{\partial *} \big[ Nf_R \circ g(x)   \big] & 
   = & \displaystyle DNf_R \circ g(x) \cdot
     \displaystyle \frac{\partial }{\partial * }  g(x), \\
\end{array}
\end{equation}
where $* \in \{ \alpha, \beta \}$. Since our gap mappings $f=(f_L, f_R, b)$ have Schwarzian derivative $Sf$ and nonlinearity $Nf$ bounded, by the formula of the Schwarzian derivative

\[
\displaystyle Sf = D(Nf) - \frac{1}{2}(Nf)^2,
\]
we obtain that the derivative of the nonlinearity $D(Nf)$ is bounded. Using the hypothesis that the function $g$ has bounded partial derivatives the result follows as desired.
\end{proof}

The next result is about a property that the nonlinearity operator satisfies and which we will need. A proof for it can be found in \cite{MarcoWinclker}.

\begin{lemma}[The chain rule for the nonlinearity operator.]
\label{Lemma Chain rule for nonlinearity}
If $\phi, \psi \in \mathcal{D}^2$ then
\begin{equation}
\label{formula chain rule for nonlinearity}
N(\psi \circ \phi) = N \psi \circ \phi \cdot D \phi + N \phi.
\end{equation}
\end{lemma}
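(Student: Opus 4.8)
The plan is to reduce everything to two elementary facts: the ordinary chain rule for $D$, and that $\log$ converts products into sums. Recall from Definition~\ref{def nonlinearity operator} that $N\varphi = D\log D\varphi$ for any $\varphi\in\mathrm{Diff}_+^2$, so the entire computation will take place at the level of $\log D$, and no complex analysis or functional-analytic input is needed.

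First I would apply the usual chain rule to get $D(\psi\circ\phi) = (D\psi\circ\phi)\cdot D\phi$. Since $\phi$ and $\psi$ are orientation preserving, both factors are strictly positive, so we may take logarithms, obtaining the pointwise identity $\log D(\psi\circ\phi) = (\log D\psi)\circ\phi + \log D\phi$. Differentiating this in $x$, and using the chain rule a second time on the first summand, yields
\[
N(\psi\circ\phi) = \big((D\log D\psi)\circ\phi\big)\cdot D\phi + D\log D\phi = (N\psi\circ\phi)\cdot D\phi + N\phi,
\]
which is precisely formula (\ref{formula chain rule for nonlinearity}).

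The only points that need a word of care are regularity and domains: one needs the range of $\phi$ to lie in the domain of $\psi$ so that $\psi\circ\phi$ is defined (this is what the hypothesis $\phi,\psi\in\mathcal D^2$, together with the phrase ``whenever they make sense'' in related statements, is meant to encode), and one needs $\phi,\psi\in\mathcal C^2$ so that $D^2$, hence $N$, exists and is continuous; granted this, the computation above is valid pointwise and produces a genuine continuous function. I do not expect any real obstacle here: the identity is a formal consequence of the multiplicativity of $D$ under composition, so I would simply present the short direct calculation above rather than appeal to anything deeper. (One could alternatively derive it from the cocycle identity $\varsigma$-style, but the direct route is cleanest.)
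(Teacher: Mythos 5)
Your computation is correct and is exactly the standard elementary argument: chain rule for $D$, take logarithms, differentiate again. The paper itself does not include a proof of this lemma — it simply cites \cite{MarcoWinclker} — so there is no in-paper proof to compare against; your two-line derivation from $N\varphi = D\log D\varphi$ is the natural one and is what that reference does as well. One small remark on hypotheses: the notation $\mathcal D^2$ in the lemma statement refers to $\mathcal C^2$ orientation-preserving diffeomorphisms (so that $D\phi>0$ and the logarithm is legitimate, and so that $N$ exists as a continuous function), not to the space of gap maps $\mathcal D^k = \mathcal D_L^k\times\mathcal D_R^k\times(0,1)$ introduced earlier in the paper — you have implicitly read it this way, which is the intended reading, but it is worth being explicit since the paper overloads the symbol.
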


An immediately consequence of Lemma~\ref{Lemma Chain rule for nonlinearity} is the following result.
\begin{cor}
\label{cor:bdd N}
The operators 
\begin{equation}
\begin{array}{ccl}
     (\alpha, \beta, b) & \mapsto  & \eta_{\tilde{f}_L}:=N(\tilde{f}_L)=N(f_L^k \circ f_R \circ f_L)\\
     && \\
     (\alpha, \beta, b) & \mapsto  & \eta_{\tilde{f}_R}:=N(\tilde{f}_R)=N(f_L^k \circ f_R)
\end{array}    
\end{equation}
are differentiable. Furthermore, their partial derivatives are bounded.
\end{cor}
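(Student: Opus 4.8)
The plan is to apply the chain rule for the nonlinearity operator, Lemma~\ref{Lemma Chain rule for nonlinearity}, repeatedly. Write $\tilde f_L = g_{k+2}\circ g_{k+1}\circ\cdots\circ g_1$ with $g_1=f_L$, $g_2=f_R$ and $g_j=f_L$ for $3\le j\le k+2$, and put $h_0=\mathrm{id}$ and $h_i=g_i\circ\cdots\circ g_1$. Iterating (\ref{formula chain rule for nonlinearity}) gives the finite expansion
\begin{equation*}
\eta_{\tilde f_L}=Ng_1+\sum_{i=1}^{k+1}\bigl(Ng_{i+1}\circ h_i\bigr)\cdot Dh_i,
\end{equation*}
and likewise $\eta_{\tilde f_R}=Nf_R+\sum_{i=1}^{k}\bigl(Nf_L\circ h_i\bigr)\cdot Dh_i$ for $\tilde f_R=f_L^k\circ f_R$. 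Every factor on the right is built from objects whose differentiability and boundedness we have already established, so the corollary will follow once we check that the resulting series of partial derivatives converges uniformly.

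First I would note that each $h_i$ is differentiable in $(\alpha,\beta,b)$ with uniformly bounded partial derivatives: by (\ref{derivative of f o g}) one has $\partial_* h_i=\sum_{j\le i}D(g_i\circ\cdots\circ g_{j+1})\cdot \partial_* g_j$, and since $\partial_* f_L,\partial_* f_R$ are bounded (Lemma~\ref{lemma derivatives of f_L and f_R}) while each factor $D(g_i\circ\cdots\circ g_{j+1})$ is at most $\nu^{\,i-j}<1$ by dissipativity, the sum is bounded by $C\sum_{m\ge 0}\nu^m$. Likewise $\|Dh_i\|_{\mathcal C^0}\le\nu^i$, and, exactly as in the proof of Lemma~\ref{derivatives of compositions with nonlinearity}, boundedness of the nonlinearity and Schwarzian derivative of $f$ forces $Dh_i$ (together with its first derivative and its partial derivatives in the parameters) to be bounded, in fact to decay geometrically in $i$. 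Then Lemma~\ref{derivatives of compositions with nonlinearity}, applied with $g=h_i$, shows that $Ng_{i+1}\circ h_i$ is differentiable with bounded partial derivatives.

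Finally, by the Leibniz rule each summand $(Ng_{i+1}\circ h_i)\cdot Dh_i$ is differentiable, and its partial derivative is bounded by a constant multiple of $\nu^i$ because of the geometric decay of the factor $Dh_i$ and of $\partial_*(Dh_i)$. Hence $\sum_i\partial_*\bigl[(Ng_{i+1}\circ h_i)\cdot Dh_i\bigr]$ converges uniformly, so $\eta_{\tilde f_L}$ is differentiable with partial derivative equal to this term-by-term sum, and the latter is bounded by $C\sum_{i\ge 0}\nu^i=C/(1-\nu)$, a bound independent of $k$. The same argument gives the claim for $\eta_{\tilde f_R}$, and the case $\sigma_f=+$ is obtained by interchanging the roles of $f_L$ and $f_R$. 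The only delicate point is this uniformity in $k$: it is exactly dissipativity ($0<Df\le\nu<1$) that produces the geometric decay $\|Dh_i\|\le\nu^i$, which simultaneously controls the growing number of terms in the expansion and the growth of the distortion of long compositions in deep renormalization.
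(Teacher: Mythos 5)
Your proof follows essentially the same route as the paper: iterate the chain rule for the nonlinearity operator (Lemma~\ref{Lemma Chain rule for nonlinearity}) to obtain the telescoping expansion of $\eta_{\tilde f_L}$ and $\eta_{\tilde f_R}$, differentiate term by term in $(\alpha,\beta,b)$, and close the argument with Lemma~\ref{derivatives of compositions with nonlinearity} together with the boundedness of $\partial_* Df_L,\partial_* Df_R$ coming from $Df_L=\alpha D\varphi_L$, $Df_R=\beta D\varphi_R$. The one thing you add that the paper leaves implicit is the explicit uniformity in $k$ via the contraction factor $\|Dh_i\|\le\nu^i$; this is a welcome clarification, though note the decay of $\partial_*(Dh_i)$ is really of the form $i\,\nu^{i-1}$ (polynomial times geometric, from the sum over the $i$ places where the parameter can act) rather than strictly geometric -- still summable, so the conclusion stands.
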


\begin{proof}
From Lemma~\ref{Lemma Chain rule for nonlinearity} we obtain
\begin{equation}
\begin{array}{ccl}
\eta_{\tilde{f}_L} & = & N(\tilde{f}_L) = N(f_L^k \circ f_R \circ f_L)  \\
   & = & \displaystyle \sum_{i=1}^{k}Nf_L \big( f_L^{k-i} \circ f_R \circ f_L \big) \cdot Df_{L}^{k-i} \circ f_R \circ f_L \cdot D(f_R \circ f_L) \\  
   && \\
   && + Nf_R \circ f_L \cdot Df_L + Nf_L \\
&& \\
\eta_{\tilde{f}_R} & = & N(\tilde{f}_R) = N(f_L^k \circ f_R)  \\
   & = & \displaystyle \sum_{i=1}^{k}Nf_L \big( f_L^{k-i} \circ f_R \big) \cdot Df_{L}^{k-i} \circ f_R \cdot Df_R +Nf_R. \\  
\end{array}    
\end{equation}
Taking $* \in \{ \alpha, \beta, b \}$ we have

\begin{equation}
\label{derivative of eta_{tilde{f}_L}}
\begin{array}{ccl}
\displaystyle \frac{\partial }{\partial *} \eta_{\tilde{f}_L} & = & \displaystyle \sum_{i=1}^{k} \frac{\partial }{\partial *} \big[ Nf_L \big( f_L^{k-i} \circ f_R \circ f_L \big) \cdot Df_{L}^{k-i} \circ f_R \circ f_L \cdot D(f_R \circ f_L) \big] \\  
   && \\
   && + \displaystyle \frac{\partial }{\partial *} \big[ Nf_R \circ f_L \cdot Df_L \big] + \frac{\partial }{\partial *} \big[ Nf_L \big] \\
\end{array}    
\end{equation}
and

\begin{equation}
\label{derivative of eta_{tilde{f}_R}}
\begin{array}{ccl}
\displaystyle \frac{\partial }{\partial *} \eta_{\tilde{f}_R} & = & \displaystyle \sum_{i=1}^{k} \frac{\partial }{\partial *} \big[ Nf_L \big( f_L^{k-i} \circ f_R \big) \cdot Df_{L}^{k-i} \circ f_R \cdot Df_R \big] + \displaystyle \frac{\partial }{\partial *} \big[ Nf_R \big]. \\
\end{array}    
\end{equation}
Since $f_L = 1_{T_{0,L}} \circ \varphi_L \circ 1_{I_{0,L}}^{-1}$, $f_R = 1_{T_{0,R}} \circ \varphi_R \circ 1_{I_{0,R}}^{-1}$, $T_{0,L}= [\alpha (b-1)+b, b]$, $T_{0,R}= [b-1, \beta b + b-1]$, $I_{0,L}= [b-1, 0]$, and $I_{0,R}= [0, b]$ we obtain

\[
Df_L = \displaystyle \frac{|T_{0,L}|}{|I_{0, L}|} \cdot D \varphi_L = \alpha \cdot D \varphi_L \hspace{1cm} \mbox{and} \hspace{1cm}
Df_R = \displaystyle \frac{|T_{0,R}|}{|I_{0, R}|} \cdot D \varphi_R = \beta \cdot D \varphi_R.
\]
Hence we get that 

\[
\displaystyle \frac{\partial }{\partial * } Df_L  \hspace{1cm} \mbox{and} \hspace{1cm} \frac{\partial }{\partial * } Df_R
\]
are bounded for $* \in \{ \alpha, \beta, b \}$. From this and from Lemma~\ref{derivatives of compositions with nonlinearity} the result follows.
\end{proof}

\noindent {\bf Proof of Lemma~\ref{lemma for Cmatrix}}.
Let us assume that $\sigma=-;$ the proof for $\sigma=+$ is similar. 
By the last four results we have that the partial derivatives of $\tilde \eta_L$ and $\tilde\eta_R$
with respect to $\alpha$ and $b$ are bounded.
It remains for us to show that $
\displaystyle\Big| \frac{\partial \tilde{\eta}_L}{\partial \beta}\Big|$ and $\displaystyle\Big|\frac{\partial \tilde{\eta}_R}{\partial \beta}\Big|$
are arbitrarily small at sufficiently deep renormalization levels.
\begin{comment}
, when $\sigma_f=-$, and that  
\[
\displaystyle \frac{\partial \tilde{\eta}_L}{\partial \alpha} \hspace{1cm} \mbox{and} \hspace{1cm} \frac{\partial \tilde{\eta}_R}{\partial \alpha}
\]
are tiny, when $\sigma_f=+$. 
\end{comment}
Notice that we have 
$0_{k+1}^{+}= f_L^k(b-1)$ and 
$0_{k+2}^{-}= f_L^k \circ f_R (b)$, then
$
\displaystyle \frac{\partial 0_{k+1}^{+}}{\partial \beta} = 0 $
and
\[
\displaystyle \frac{\partial 0_{k+2}^{-}}{\partial \beta} = D f_L^k \circ f_R(b) \cdot \frac{\partial f_R}{\partial \beta }(b) = b \cdot D f_L^k \circ f_R(b)
\]
which goes to zero when the renormalization level goes to infinity. 
%These imply that $\displaystyle \frac{\partial %\tilde{\eta}_L}{\partial \beta}$ and $\displaystyle %\frac{\partial \tilde{\eta}_R}{\partial \beta}$ are tiny.
\begin{comment}
Analogously, in case $\sigma_f = +$ we have $0_{k+2}^{+}= f_R^k \circ f_L (b-1)$ and $0_{k+1}^{-}= f_R^k (b)$, then

\[
\displaystyle \frac{\partial 0_{k+1}^{-}}{\partial \alpha} = 0 
\]
and
\[
\displaystyle \frac{\partial 0_{k+2}^{+}}{\partial \alpha} = D f_R^k \circ f_L(b-1) \cdot \frac{\partial f_L}{\partial \alpha }(b-1) 
\]
which goes to zero when the renormalization level goes to infinity, and implying in $\displaystyle \frac{\partial \tilde{\eta}_L}{\partial \alpha}$ and  $\displaystyle \frac{\partial \tilde{\eta}_R}{\partial \alpha}$ have norm as small as we desire.
\end{comment}
\qed

\subsection{The $D_{\underline{f}}$ matrix}

\begin{equation}
\label{partial derivatives D for sigma -}
D_{\underline{f}} = \left( \begin{array}{cc}
\displaystyle \frac{\partial \tilde{\eta}_L}{\partial \eta_L }     & \displaystyle \frac{\partial \tilde{\eta}_L}{\partial \eta_R } \\ 
&  \\
 \displaystyle \frac{\partial \tilde{\eta}_R}{\partial \eta_L }     & \displaystyle \frac{\partial \tilde{\eta}_R}{\partial \eta_R }  \end{array}           \right),
\end{equation}

\begin{lemma}
\label{lemma entries of D matrix}
Let $\underline{f} \in  \underline{\mathcal D}_0$. The maps

\begin{equation}
\begin{array}{ccl}
    {\mathcal C}^1([0, 1])^1 \ni (\eta_L, \eta_R) & \mapsto &  \tilde{\eta}_L \in {\mathcal C}^1([0, 1]) \\
    && \\
     {\mathcal C}^1([0, 1])^1 \ni (\eta_L, \eta_R) & \mapsto &  \tilde{\eta}_R \in {\mathcal C}^1([0, 1]) \\ 
\end{array}    
\end{equation}
are differentiable. Furthermore, for any $\varepsilon>0,$ and infinitely renormalizable
$\underline g\in\underline{\mathcal{D}}_0,$ we have that there exists $n_0\in\mathbb N,$ so that if $n\geq n_0,$ and $\underline f=\underline{\mathcal{R}}^n \underline g,$ we have that 
each $\displaystyle \Big|\frac{\partial\tilde\eta_i}{\partial\eta_j}\Big|<\varepsilon,$ for $i,j\in\{L,R\}.$
\end{lemma}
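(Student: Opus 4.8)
The plan is to follow the template of the proofs of Lemmas~\ref{lemma partial derivatives of abb tilde respec to abb}, \ref{lemma entries of B_f matrix} and~\ref{lemma for Cmatrix}. First I would treat only the case $\sigma_f=-$, the case $\sigma_f=+$ being symmetric (using the analogue of~(\ref{tilde eta_L and eta_R})), and recall that $\tilde{\eta}_L=Z_{[0_{k+1}^+,0]}\eta_{\tilde{f}_L}$ with $\tilde{f}_L=f_L^k\circ f_R\circ f_L$ and $\tilde{\eta}_R=Z_{[0,0_{k+2}^-]}\eta_{\tilde{f}_R}$ with $\tilde{f}_R=f_L^k\circ f_R$. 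Differentiability of the four maps follows from the differentiability of $(\alpha,\beta,b)\mapsto\eta_{\tilde{f}_L},\eta_{\tilde{f}_R}$ (Corollary~\ref{cor:bdd N}), of the boundary operators (Lemma~\ref{Lemma with formulas}), and of the zoom operator (Lemma~\ref{lemma Zoom derivatives}), combined with the chain rule. As in the previous lemmas, the real work is to write each entry of $D_{\underline{f}}$ as a sum of terms, each a product of factors that are bounded uniformly in the renormalization level (supplied by Lemmas~\ref{lemma derivatives of f_L and f_R}, \ref{Lemma with formulas}, \ref{derivatives of compositions with nonlinearity} and Corollary~\ref{cor:bdd N}) and at least one factor that is small in deep renormalization: either one of the lengths $|0_{k+1}^+|,|0_{k+2}^-|$ (both $\le|I'|\to 0$), or a quantity controlled by $Nf_L$, $Nf_R$, $D^2f$, $D^3f$, $\|\eta_L\|_{\mathcal{C}^1}$ or $\|\eta_R\|_{\mathcal{C}^1}$, all of which become arbitrarily small as $n\to\infty$ by the proposition on convergence of renormalization to affine maps (using that in the absorbing set $Df$ stays away from $0$ and $1$).

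Concretely, since $Z_I$ is linear in its nonlinearity argument and depends on $I=[0_{k+1}^+,0]$ only through the moving endpoint $0_{k+1}^+$, differentiating $\tilde{\eta}_L$ in the direction of a perturbation of $\ast\in\{\eta_L,\eta_R\}$ produces a moving--interval term $\big(\partial Z_{[a,0]}\eta_{\tilde{f}_L}/\partial a\big)\big|_{a=0_{k+1}^+}\cdot\big(\partial 0_{k+1}^+/\partial\ast\big)$ and a nonlinearity term $Z_{[0_{k+1}^+,0]}\big(\partial\eta_{\tilde{f}_L}/\partial\ast\big)$. For the moving--interval term, Lemma~\ref{lemma Zoom derivatives} bounds the $\mathcal{C}^2$--norm of $\partial Z_{[a,0]}\eta_{\tilde{f}_L}/\partial a$ by $2|\tilde{f}_L|_3=2\|\eta_{\tilde{f}_L}\|_{\mathcal{C}^1}$ and Lemma~\ref{Lemma with formulas} bounds $\partial 0_{k+1}^+/\partial\ast$; applying the chain rule for the nonlinearity (Lemma~\ref{Lemma Chain rule for nonlinearity}) to $\tilde{f}_L=f_L^k\circ f_R\circ f_L$ and using $0<Df<\nu<1$ gives $\|\eta_{\tilde{f}_L}\|_{\mathcal{C}^1}\le C(\nu)\big(\|Nf_L\|_{\mathcal{C}^1}+\|Nf_R\|_{\mathcal{C}^1}\big)$, which is small in deep renormalization, so this term is small. (Note $0_{k+1}^+=f_L^k(b-1)$ does not depend on $\eta_R$, so this term is simply absent from $\partial\tilde{\eta}_L/\partial\eta_R$.) For the nonlinearity term, $\|Z_{[0_{k+1}^+,0]}h\|_{\mathcal{C}^1}\le|0_{k+1}^+|\,\|h\|_{\mathcal{C}^1}$, so once the operators $\partial\eta_{\tilde{f}_L}/\partial\eta_L$ and $\partial\eta_{\tilde{f}_L}/\partial\eta_R$ are known to be bounded uniformly in $n$, this term is $O(|0_{k+1}^+|)=O(|I'|)$ and hence small. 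The same scheme applies to $\tilde{\eta}_R$, with $\tilde{f}_R=f_L^k\circ f_R$ and the moving endpoint $0_{k+2}^-=f_L^k\circ f_R(b)$ (which now depends on both $\eta_L$ and $\eta_R$).

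The hard part is the uniform bound on $\partial\eta_{\tilde{f}_L}/\partial\eta_\ast$ and $\partial\eta_{\tilde{f}_R}/\partial\eta_\ast$, since a priori these are derivatives of an expression $N(f_L^k\circ\cdots)$ with $k=k_n$ unbounded. I would expand $\eta_{\tilde{f}_L}=N(f_L^k\circ f_R\circ f_L)$ by the chain rule for the nonlinearity exactly as in the proof of Corollary~\ref{cor:bdd N} and differentiate term by term: each derivative falls either on one of the $k+1$ copies of $f_L$ (through $\partial f_L/\partial\eta_L$, bounded by Lemmas~\ref{lemma derivatives of f_L and f_R} and~\ref{lemma d phi d eta}), on one of the $k+1$ copies of $Nf_L=|I_{0,L}|^{-1}N\varphi_L\circ 1_{I_{0,L}}^{-1}$ (through the bounded operator $\Delta\eta_L\mapsto|I_{0,L}|^{-1}\Delta\eta_L\circ 1_{I_{0,L}}^{-1}$, compare Lemma~\ref{derivatives of compositions with nonlinearity}), or on a derivative factor $Df_L^{j}$ or $D(f_R\circ f_L)$. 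The mechanism that saves us is that every one of the $k+1$ resulting terms carries a factor $Df_L^{j}\le\nu^{j}$ with $j$ of order $k-i$ (and after also differentiating such a factor one gets instead a factor of order $(k-i)\nu^{k-i-1}$), so that summing over $i$ yields a bound independent of $k$. The genuinely delicate bookkeeping is that $\partial f_L/\partial\eta_L$ and the $\partial(Nf_L)/\partial\eta_L$ above carry the affine--rescaling constants $|I_{0,L}|^{-1}=(1-b)^{-1}$, $|T_{0,L}|$ (and $b^{-1}$, $|T_{0,R}|$ for $f_R$) coming from $f_L=1_{T_{0,L}}\circ\varphi_L\circ 1_{I_{0,L}}^{-1}$; one must check that these do not escape, which is exactly where one uses that $\underline{f}=\underline{\mathcal{R}}^n\underline{g}$ lies in the renormalization--invariant absorbing set (so that the lengths $|I_{0,L}|$, $|T_{0,L}|$, $|I_{0,R}|$, $|T_{0,R}|$ and the slopes $\alpha,\beta$ are controlled), together with the extra smallness of $\|\eta_L\|_{\mathcal{C}^1},\|\eta_R\|_{\mathcal{C}^1}$. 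This is the analogue of the estimates assembled in the proof of Lemma~\ref{lemma partial derivatives of abb tilde respec to abb}, and I expect it to be the only real obstacle.

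Once both bounds are in place, one concludes that $|\partial\tilde{\eta}_i/\partial\eta_j|<\varepsilon$ for every $i,j\in\{L,R\}$ as soon as $n$ is large enough that $|I'|$, $\|Nf_L\|_{\mathcal{C}^1}$ and $\|Nf_R\|_{\mathcal{C}^1}$ lie below a threshold depending only on $\varepsilon$ and the absorbing set. In contrast with Lemmas~\ref{lemma partial derivatives of abb tilde respec to abb} and~\ref{lemma entries of B_f matrix}, where the $|I'|^{-1}$ blow--up produced the large entries $\partial\tilde{b}/\partial b$ and $\partial\tilde{b}/\partial\eta_L$, no such blow--up occurs here: every contribution to $D_{\underline{f}}$ is already multiplied by a small length or a small nonlinearity, which is precisely why the diffeomorphic block of the renormalization derivative is contracting.
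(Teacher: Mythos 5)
Your proposal follows essentially the same route as the paper: the same splitting of $\partial\tilde\eta_\star/\partial\eta_\star$ into the moving-endpoint term $\big(\partial Z_{[0_{k+1}^+,0]}\eta_{\tilde f_L}/\partial 0_{k+1}^+\big)\cdot\big(\partial 0_{k+1}^+/\partial\eta_\star\big)$ and the zoomed term $Z_{[0_{k+1}^+,0]}\big(\partial\eta_{\tilde f_L}/\partial\eta_\star\big)$ of operator norm $|0_{k+1}^+|$, the same use of the zoom-derivative lemma and the chain rule for the nonlinearity, and the same geometric-decay bookkeeping (via $Df_L^{j}\le\nu^{j}$ and the Schwarzian bound on $D(Nf)$) to get a $k$-independent bound on $\partial\eta_{\tilde f_L}/\partial\eta_\star$. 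The only (harmless) deviation is that for the moving-endpoint term you place the smallness on $\|\eta_{\tilde f_L}\|_{\mathcal C^1}$ while the paper places it on $\partial 0_{k+1}^+/\partial\eta_L\asymp|T_{0,L}|\big(1-1_{I_{0,L}}^{-1}(f_L^{k-1}(b-1))\big)$; both factors are small at deep renormalization levels.
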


We will prove this lemma after some preparatory results.

\begin{lemma}
\label{lemma d Nf by d eta}
Let 

\begin{equation}
\begin{array}{ccccl}
G & : & \mbox{\em Diff}_+^1([0, 1]) & \rightarrow & \mathcal{C}^1([0, 1]) \\
  &   & \eta & \mapsto & G(\eta) \\  
\end{array}    
\end{equation}
be a $\mathcal{C}^1$ operator with bounded derivative. 
Let $\underline{f} \in \underline{\mathcal D}_0 $. The operators

\begin{equation}
\begin{array}{ccccl}
H_1, H_2 & : & \mbox{\em Diff}_+^3([0, 1]) & \rightarrow & \mathcal{C}^1([0, 1]) \\
&  & \eta_{\star} & \mapsto & \left\{ \begin{array}{l}
     H_1(\eta_{\star})=Nf_{L} \circ G(\eta_{\star})  \\
     H_2(\eta_{\star})=Nf_{R} \circ G(\eta_{\star}) 
\end{array}
\right. \\ 
\end{array}    
\end{equation}
where $\star \in \{ L, R \}$, are differentiable. %and their derivatives are bounded.
\end{lemma}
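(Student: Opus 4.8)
The plan is to prove the lemma by the chain rule for Fr\'echet derivatives, writing $H_i=\Lambda_i\circ G$, where $\Lambda_L,\Lambda_R$ are the (left) substitution operators $\Lambda_i(g)=Nf_i\circ g$, defined on the set of $g\in\mathcal C^1([0,1])$ with $g([0,1])\subset[0,1]$ --- this is exactly the ``whenever the expressions make sense'' hypothesis. Since $G$ is $\mathcal C^1$ with bounded derivative by hypothesis (in the applications to Lemma~\ref{lemma entries of D matrix} this is supplied by Lemma~\ref{lemma d phi d eta} and Corollary~\ref{corollary d fgh dg}), it then suffices to show that $\Lambda_L$ and $\Lambda_R$ are differentiable at the points $G(\eta_\star)$, with bounded derivative depending continuously on the base point.

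First I would record the regularity of the fixed data. Since $f_L,f_R\in\mathrm{Diff}_+^3([0,1])$ we have $Nf_L,Nf_R\in\mathcal C^1([0,1])$; moreover, from the Schwarzian identity $Sf=D(Nf)-\tfrac12(Nf)^2$ together with the boundedness of $Nf_i$ and $Sf_i$ for dissipative gap maps (cf. the proof of Lemma~\ref{derivatives of compositions with nonlinearity}), the derivatives $D(Nf_L),D(Nf_R)$ are bounded, hence uniformly continuous on the compact interval $[0,1]$.

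Next comes the key step: differentiability of $\Lambda_i$. I claim that $D\Lambda_i(g)h=(D(Nf_i)\circ g)\cdot h$, which I would verify by Taylor's formula with integral remainder,
\[
Nf_i(g+h)-Nf_i(g)-(D(Nf_i)\circ g)\,h=\Big(\int_0^1\big[D(Nf_i)(g+th)-D(Nf_i)(g)\big]\,dt\Big)\cdot h ,
\]
estimating the $\mathcal C^0$-norm of the right-hand side by $\omega\big(\|h\|_{\mathcal C^0}\big)\,\|h\|_{\mathcal C^0}=o(\|h\|_{\mathcal C^1})$, where $\omega$ is the modulus of continuity of $D(Nf_i)$. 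Boundedness of $D(Nf_i)$ makes $h\mapsto(D(Nf_i)\circ g)\cdot h$ a bounded linear operator, and continuity of $g\mapsto D\Lambda_i(g)$ follows from continuity of substitution by the uniformly continuous function $D(Nf_i)$. Composing with $G$ then yields
\[
DH_i(\eta_\star)\,\Delta\eta_\star=\big(D(Nf_i)\circ G(\eta_\star)\big)\cdot\big(DG(\eta_\star)\Delta\eta_\star\big),
\]
which is bounded (since $G$ has bounded derivative and $D(Nf_i)$ is bounded) and continuous in $\eta_\star$ (since $G$ and $DG$ are).

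The hard part, and really the only subtlety, is the bookkeeping around a loss of one degree of smoothness: because $Nf_i$ is merely $\mathcal C^1$, the derivative $D\Lambda_i(g)$, and hence $DH_i(\eta_\star)$, a priori takes values in $\mathcal C^0$ rather than $\mathcal C^1$, so the word ``differentiable'' here must be read in the jump-out sense of \cite{MP} --- precisely as $\mathcal R\colon\mathcal C^3\to\mathcal C^2$ has its derivative controlled only in the weaker norm. This is harmless for the downstream use: in the proof of Lemma~\ref{lemma entries of D matrix} the nonlinearities $\eta_{\tilde f_L},\eta_{\tilde f_R}$ are differentiated at this lower regularity and the zoom operator $Z$ is re-applied afterwards, which restores the $\mathcal C^1$ bounds with the required smallness. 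I would also note explicitly that one must check $G(\eta_\star)([0,1])\subset[0,1]$, which in the applications holds because $G(\eta_\star)$ is a composition of branches of an infinitely renormalizable gap map restricted to a dynamically defined subinterval.
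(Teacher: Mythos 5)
Your decomposition $H_i = \Lambda_i \circ G$ with $\Lambda_i(g) = Nf_i \circ g$ treats $Nf_L, Nf_R$ as \emph{fixed} functions, so your chain rule produces only the single term
\[
DH_i(\eta_\star)\,\Delta\eta_\star \;=\; \bigl(D(Nf_i)\circ G(\eta_\star)\bigr)\cdot\bigl(DG(\eta_\star)\Delta\eta_\star\bigr).
\]
But in the decomposition-space coordinates $f_L = 1_{T_{0,L}}\circ\varphi_L\circ 1_{I_{0,L}}^{-1}$ with $\varphi_L = N^{-1}(\eta_L)$, so $Nf_L$ \emph{itself} depends on $\eta_L$ (and likewise $Nf_R$ on $\eta_R$); the map $\eta_\star\mapsto H_i(\eta_\star)$ is not $\Lambda_i\circ G$ for a fixed $\Lambda_i$, since the outer function moves with the variable. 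The paper's one-line proof applies formula~(\ref{derivative of f o g}) from Lemma~\ref{lemma partial operator} directly and keeps both summands:
\[
\frac{\partial}{\partial\eta_\star}\bigl[Nf_L\circ G(\eta_\star)\bigr]
= \frac{\partial}{\partial\eta_\star}\bigl[Nf_L\bigr]\circ G(\eta_\star)
\;+\; D(Nf_L)\circ G(\eta_\star)\cdot\frac{\partial}{\partial\eta_\star}\bigl[G(\eta_\star)\bigr].
\]
Your argument omits the first summand. This is not a harmless simplification: the proof of Lemma~\ref{lemma entries of D matrix} explicitly uses $\frac{\partial}{\partial\eta_\star}[Nf_L]=\frac{1}{|I_{0,L}|}\frac{\partial}{\partial\eta_\star}[\eta_{\varphi_L}]\circ 1_{I_{0,L}}^{-1}$, coming from~(\ref{nonlinearity of branches f_L and f_R}), and this is a genuine nonzero contribution (of norm $1/|I_{0,L}|$) when $\star=L$; it must be tracked in the sum~(\ref{d eta f_L tilde by d eta_L}). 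The same structure appears in the companion Lemma~\ref{derivatives of compositions with nonlinearity}, whose $b$-derivative of $Nf_L\circ g$ also has an ``extra'' first term from $|I_{0,L}|$ varying while the $\alpha,\beta$-derivatives do not.

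What you do have right: your Taylor-formula verification that left-composition $g\mapsto Nf_i\circ g$ is differentiable with derivative $h\mapsto (D(Nf_i)\circ g)\cdot h$, your use of the Schwarzian identity to bound $D(Nf_i)$, and your remark that the derivative only lands in $\mathcal C^0$ (jump-out differentiability in the sense of \cite{MP}) are all sound, and this gives the second summand. To complete the proof you need to also differentiate the outer function: compute $\frac{\partial}{\partial\eta_\star}[Nf_\star]$ from~(\ref{nonlinearity of branches f_L and f_R}) using $N\varphi_\star=\eta_\star$ (so this derivative is the identity scaled by $1/|I_{0,\star}|$ and precomposed with $1_{I_{0,\star}}^{-1}$), observe it vanishes when the outer branch does not match $\star$, and add the resulting term $\frac{\partial}{\partial\eta_\star}[Nf_i]\circ G(\eta_\star)$ to $DH_i$.
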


\begin{proof}
Using the partial derivative operator $\partial $ we obtain 

\[
\displaystyle \frac{\partial }{\partial \eta_{\star}} \big[ H_{1}(\eta_{\star})\big] = \displaystyle \frac{\partial }{\partial \eta_{\star}} \big[ Nf_{L}\big] \circ G (\eta_{\star}) + D(Nf_{L}) \circ G(\eta_{\star}) \cdot \frac{\partial }{\partial \eta_{\star}} \big[ G(\eta_{\star}) \big]
\]
and
\[
\displaystyle \frac{\partial }{\partial \eta_{\star}} \big[ H_{2}(\eta_{\star})\big] = \displaystyle \frac{\partial }{\partial \eta_{\star}} \big[ Nf_{R}\big] \circ G (\eta_{\star}) + D(Nf_{R}) \circ G(\eta_{\star}) \cdot \frac{\partial }{\partial \eta_{\star}} \big[ G(\eta_{\star}) \big],
\]
with $\star \in \{ L, R \}$.
\end{proof}

\begin{lemma}
\label{lemma d D varphi bu d eta}
The operator $ F : \mbox{\em Diff}_{+}^{3}([0, 1])= {\mathcal C}^1([0, 1]) \rightarrow {\mathcal C}^1([0, 1])$

\begin{equation*}
\begin{array}{ccccl}
   F & : &  \eta & \mapsto &  F(\eta) = D \varphi_{\eta}(x)\\
\end{array}
\end{equation*}
is differentiable and its derivative is bounded.
\end{lemma}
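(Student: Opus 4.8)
The plan is to realize $F$ as a composition of elementary maps that are manifestly smooth, and then read off both differentiability and the bound on the derivative. Recall from (\ref{expression of phi_eta}) that $\varphi_\eta=N^{-1}\eta$ satisfies
\[
D\varphi_\eta(x)=\frac{e^{\int_0^x\eta(t)\,dt}}{\int_0^1 e^{\int_0^s\eta(t)\,dt}\,ds},
\]
so $F(\eta)$ is the function $x\mapsto D\varphi_\eta(x)$, which lies in $\mathcal C^1([0,1])$ because $\eta\in\mathcal C^1([0,1])$ makes $x\mapsto\int_0^x\eta$ of class $\mathcal C^2$. First I would introduce the antiderivative operator $I\eta(x)=\int_0^x\eta(t)\,dt$ and the mean functional $L(g)=\int_0^1 g(s)\,ds$, both of which are bounded and linear, hence $\mathcal C^\infty$, and the exponential operator $E(g)=e^{g}$ on $\mathcal C^1([0,1])$. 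With this notation, $F(\eta)=E(I\eta)\cdot\big(L(E(I\eta))\big)^{-1}$.

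The key steps, in order, are as follows. (i) Show that $E:\mathcal C^1([0,1])\to\mathcal C^1([0,1])$ is continuously Fr\'echet differentiable with $DE(g)[h]=h\,e^{g}$; this rests on the identity $e^{g+h}-e^{g}-h\,e^{g}=e^{g}(e^{h}-1-h)$ together with the elementary estimates $\|e^{h}-1-h\|_{\mathcal C^1}=o(\|h\|_{\mathcal C^1})$ and $\|e^{g}\|_{\mathcal C^1}\le(1+\|g'\|_{\mathcal C^0})e^{\|g\|_{\mathcal C^0}}$, which are valid because $\exp$ is smooth and $[0,1]$ is compact; in particular $\|DE(g)\|$ is bounded on bounded subsets of $\mathcal C^1([0,1])$. (ii) Conclude that $\eta\mapsto E(I\eta)$ and $\eta\mapsto L(E(I\eta))$ are $\mathcal C^1$ with derivatives bounded on bounded sets. (iii) Observe that $L(E(I\eta))=\int_0^1 e^{\int_0^s\eta}\,ds\ge e^{-\|\eta\|_{\mathcal C^0}}>0$, so the denominator stays bounded away from zero on bounded sets, and hence $\eta\mapsto\big(L(E(I\eta))\big)^{-1}$ is $\mathcal C^1$ (composing with $t\mapsto 1/t$ on $(0,\infty)$), again with derivative bounded on bounded sets. (iv) A product of $\mathcal C^1$ maps is $\mathcal C^1$, so $F$ is $\mathcal C^1$ and, by the product/quotient rule,
\[
DF(\eta)[\Delta\eta]=\frac{(I\Delta\eta)\,e^{I\eta}}{L(e^{I\eta})}-\frac{e^{I\eta}\,L\big((I\Delta\eta)\,e^{I\eta}\big)}{L(e^{I\eta})^{2}};
\]
combining the bounds from the previous steps with $\|I\Delta\eta\|_{\mathcal C^1}\le\|\Delta\eta\|_{\mathcal C^1}$ yields $\|DF(\eta)\|\le C(\|\eta\|_{\mathcal C^1})$, so $DF(\eta)$ is a bounded operator, uniformly so over any bounded set of nonlinearities -- in particular over those arising at deep renormalization levels, where $\|\eta\|_{\mathcal C^1}$ is small.

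The only step that requires genuine care -- the main obstacle -- is (i), the Fr\'echet differentiability of the composition (Nemytskii) operator $E(g)=e^{g}$. On $\mathcal C^0$ a composition operator with a nonaffine outer function need not be differentiable, so the fact that we are working on $\mathcal C^1([0,1])$ (with $\exp$ smooth) is essential; once this is granted the remaining steps are routine applications of the chain and product rules, the smoothness of bounded linear maps, and the lower bound on $L(E(I\eta))$ that keeps the denominator away from zero. Alternatively, one can bypass the abstract discussion of $E$ and argue exactly as in the proof of Lemma~\ref{lemma d phi d eta}: compute the G\^ateaux variation of $F$ to obtain the candidate derivative displayed above, and then verify by the same estimates that it is the Fr\'echet derivative and that it is bounded.
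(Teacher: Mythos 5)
Your proof is correct and lands on the same explicit formula for $DF(\eta)[\Delta\eta]$ as the paper, namely
\[
\frac{e^{\int_0^x\eta}}{\bigl(\int_0^1 e^{\int_0^s\eta}ds\bigr)^2}\left[\int_0^1 e^{\int_0^s\eta}ds\cdot\int_0^x\Delta\eta-\int_0^1\Bigl(\int_0^s\Delta\eta\Bigr)e^{\int_0^s\eta}ds\right],
\]
but your route is more systematic. The paper simply differentiates the closed-form expression for $D\varphi_\eta(x)$ under the integral sign (i.e.\ computes the G\^ateaux variation as in Lemma~\ref{lemma d phi d eta}) and then asserts that the resulting operator is bounded, leaving the verification of Fr\'echet differentiability implicit. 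You instead factor $F$ through the bounded linear maps $I$ and $L$, the Nemytskii operator $E(g)=e^g$, pointwise inversion, and pointwise multiplication, and invoke chain and product rules; this both produces the formula and makes the Fr\'echet differentiability (and the bound, uniform on bounded sets, in particular near $\eta=0$) an honest consequence rather than an assertion. The two proofs are thus logically equivalent in outcome, with yours supplying the scaffolding the paper omits — and you even note the direct G\^ateaux route as an alternative, which is exactly what the paper does.

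One small caveat: your cautionary remark that a Nemytskii operator with nonaffine outer function "need not be differentiable on $\mathcal C^0$" is overcautious for this particular operator. Because $\exp$ is smooth and $\mathcal C^0([0,1])$ is a Banach algebra, the identity $e^{g+h}-e^g-he^g=e^g(e^h-1-h)$ together with $\|e^h-1-h\|_{\mathcal C^0}\le\tfrac12\|h\|_{\mathcal C^0}^2e^{\|h\|_{\mathcal C^0}}$ already gives Fr\'echet differentiability of $E$ on $\mathcal C^0$; the genuine pathologies you have in mind arise for Nemytskii operators on $L^p$ spaces, not on $\mathcal C^k$. This does not affect your proof, since you work on $\mathcal C^1$ where everything goes through, but the motivation you give for why $\mathcal C^1$ is "essential" is not quite accurate.
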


\begin{proof}
Since the nonlinearity is a bijection, given a nonlinearity $\eta \in {\mathcal C}^1([0, 1])$ its corresponding diffeomorphism is given explicitly by

\[
\varphi_{\eta}(x) = \displaystyle \frac{\int_{0}^{x}e^{\int_{0}^{s} \eta (t) dt}ds}{\int_{0}^{1}e^{\int_{0}^{s} \eta (t) dt}ds},
\]
and the derivative of $\varphi_{\eta}(x)$ is

\[
D \varphi_{\eta}(x) = \displaystyle \frac{e^{\int_{0}^{x} \eta (t) dt}}{\int_{0}^{1}e^{\int_{0}^{s} \eta (t) dt}ds}.
\]
Thus, the derivative of $F$ can be calculated and is

\[
\displaystyle \frac{\partial }{\partial \eta} \big( D \varphi_{\eta}(x)\big) \Delta \eta = \frac{e^{\int_{0}^{x} \eta}}{\big( \int_{0}^{1}e^{\int_{0}^{s} \eta} ds \big)^2} \cdot 
\left[ \int_{0}^{1} e^{\int_{0}^{s} \eta } ds \cdot  \int_{0}^{x} \Delta \eta - \int_{0}^{1} [e^{\int_{0}^{s} \eta } \cdot \int_{0}^{s} \Delta \eta ]ds \right].
\]
From this expression it is possible to check and conclude that 
\[
\displaystyle \frac{\partial }{\partial \eta} \big( D \varphi_{\eta}(x)\big) \Delta \eta
\]
is bounded as we desire.
\end{proof}

\begin{cor}
\label{corollary d Df g by d eta}
Let 

\begin{equation}
\begin{array}{ccccl}
G & : & \mbox{\em Diff}_+^1([0, 1]) & \rightarrow & \mathcal{C}^1([0, 1]) \\
  &   & \eta & \mapsto & G(\eta) \\  
\end{array}    
\end{equation}
be a $\mathcal{C}^1$ operator with bounded derivative. 
Let $\underline{f} \in \underline{\mathcal D}_0 $. The operators

\begin{equation}
\begin{array}{ccccl}
H_{1}, H_2 & : & \mbox{\em Diff}_+^3([0, 1]) & \rightarrow & \mathcal{C}^1([0, 1]) \\
&  & \eta_{\star} & \mapsto & 
\left\{ 
\begin{array}{l}
H_{1}(\eta_{\star})=Df_{L} \circ G(\eta_{\star}) \\ 
H_{2}(\eta_{\star})=Df_{R} \circ G(\eta_{\star}) \\       
\end{array}
\right.
\end{array}    
\end{equation}
where $\star \in \{ L, R \}$, are differentiable and their derivatives are bounded.
\end{cor}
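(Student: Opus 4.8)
The plan is to mimic the proof of Lemma~\ref{lemma d Nf by d eta}, replacing the nonlinearities $Nf_L, Nf_R$ by the derivatives $Df_L, Df_R$ and invoking Lemma~\ref{lemma d D varphi bu d eta} at the crucial point. First I would apply the partial derivative operator $\partial$ together with the chain rule (\ref{derivative of f o g}) of Lemma~\ref{lemma partial operator} to write, for $\star\in\{L,R\}$,
$$\frac{\partial}{\partial\eta_\star}\big[H_1(\eta_\star)\big]=\frac{\partial}{\partial\eta_\star}\big[Df_L\big]\circ G(\eta_\star)+D\big(Df_L\big)\circ G(\eta_\star)\cdot\frac{\partial}{\partial\eta_\star}\big[G(\eta_\star)\big],$$
and the analogous identity for $H_2$ with $Df_R$ in place of $Df_L$. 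This reduces the statement to checking that each of the two summands is a bounded operator ${\mathcal C}^1([0,1])\rightarrow{\mathcal C}^1([0,1])$.

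For the second summand, note that $D(Df_\bullet)=D^2f_\bullet$ is a bounded ${\mathcal C}^1$ function: since $f=(f_L,f_R,b)$ has bounded nonlinearity and Schwarzian derivative and satisfies $0<f'\le\nu<1$, the derivatives of $f_L$ and $f_R$ up to order three are uniformly bounded, exactly as argued in the proof of Lemma~\ref{derivatives of compositions with nonlinearity}. Hence $D(Df_\bullet)\circ G(\eta_\star)$ is bounded in ${\mathcal C}^1$ because $G(\eta_\star)$ takes values in $[0,1]$, and multiplying it by $\partial[G(\eta_\star)]$, which is bounded by hypothesis, gives a bounded operator. For the first summand, observe that $Df_L$ depends only on $b$ and $\varphi_L$, and $Df_R$ only on $b$ and $\varphi_R$, so $\partial[Df_L]/\partial\eta_R=0$ and $\partial[Df_R]/\partial\eta_L=0$ and only the diagonal cases $\star=L$ for $H_1$ and $\star=R$ for $H_2$ require attention. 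Writing $f_L=1_{T_{0,L}}\circ\varphi_L\circ 1_{I_{0,L}}^{-1}$ gives $Df_L=\alpha\,(D\varphi_L)\circ 1_{I_{0,L}}^{-1}$ with $\alpha=|T_{0,L}|/|I_{0,L}|$; since $1_{I_{0,L}}^{-1}$ is a fixed affine map and $\alpha$ is a bounded constant, Lemma~\ref{lemma d D varphi bu d eta} shows that $\eta_L\mapsto Df_L$ is differentiable with bounded derivative, and the same reasoning applies to $Df_R$ using $\beta$ and $1_{I_{0,R}}^{-1}$. Composing on the right with the ${\mathcal C}^1$-valued map $G(\eta_\star)$ preserves boundedness, so the first summand is bounded as well; combining the two estimates yields the claim.

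The only obstacle is bookkeeping: one must verify that the compositions and products of ${\mathcal C}^1$-valued objects above remain bounded in the ${\mathcal C}^1$-nonlinearity norm, which rests entirely on the standing fact that at every renormalization level the branches $f_L,f_R$ have uniformly bounded derivatives up to order three. No new idea beyond Lemma~\ref{lemma d D varphi bu d eta} is needed, which is why the proof is stated as a corollary.
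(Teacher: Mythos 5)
Your proof is correct and takes essentially the approach the paper intends: the paper states this result as a corollary without supplying a proof, treating it as an immediate consequence of Lemma~\ref{lemma d D varphi bu d eta} together with the chain-rule computation from Lemma~\ref{lemma d Nf by d eta}, which is exactly the argument you reconstruct. Your decomposition into the two summands, the observation that $Df_L=\alpha\,(D\varphi_L)\circ 1_{I_{0,L}}^{-1}$ reduces the first summand to Lemma~\ref{lemma d D varphi bu d eta}, and the boundedness of $D^2 f_\bullet$ for the second summand, all match what the paper leaves implicit.
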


Now we can make the proof of Lemma~\ref{lemma entries of D matrix}.

{\bf Proof of Lemma~\ref{lemma entries of D matrix}}.

The proof will be done just for the case $\sigma_f = -$. The case $\sigma_f=+$ is analogous and we leave it to the reader. From (\ref{renormalized coordinates case -}) the partial derivatives of $\tilde{\eta}_L$ with respect to $\eta_L$ and $\eta_R$ are given by

\begin{equation}
\begin{array}{ccl}
    \displaystyle \frac{\partial \tilde{\eta}_L}{\partial \eta_L} & = & 
    \displaystyle \frac{\partial }{\partial \eta_L} \left( Z_{[0_{k+1}^+,0]} \eta_{\tilde{f}_L} \right) \\
    && \\
    & = & \displaystyle \frac{\partial }{\partial  0_{k+1}^+} \left( Z_{[0_{k+1}^+,0]} \eta_{\tilde{f}_L} \right) \cdot \displaystyle \frac{\partial }{\partial \eta_L} \left( 0_{k+1}^+ \right)
      + \displaystyle \frac{\partial }{\partial \eta_{\tilde{f}_L}} \left( Z_{[0_{k+1}^+,0]} \eta_{\tilde{f}_L} \right) \cdot \displaystyle \frac{\partial }{\partial \eta_L} \left( \eta_{\tilde{f}_L} \right)
\end{array}
\end{equation}
and

\begin{equation}
\begin{array}{ccl}
    \displaystyle \frac{\partial \tilde{\eta}_L}{\partial \eta_R} & = & 
    \displaystyle \frac{\partial }{\partial \eta_R} \left( Z_{[0_{k+1}^+,0]} \eta_{\tilde{f}_L} \right) \\
    && \\
    & = & \displaystyle \frac{\partial }{\partial  0_{k+1}^+} \left( Z_{[0_{k+1}^+,0]} \eta_{\tilde{f}_L} \right) \cdot \displaystyle \frac{\partial }{\partial \eta_R} \left( 0_{k+1}^+ \right)
      + \displaystyle \frac{\partial }{\partial \eta_{\tilde{f}_L}} \left( Z_{[0_{k+1}^+,0]} \eta_{\tilde{f}_L} \right) \cdot \displaystyle \frac{\partial }{\partial \eta_R} \left( \eta_{\tilde{f}_L} \right),
\end{array}
\end{equation}
respectively. From Lemma~\ref{lemma Zoom derivatives} we know that  
\[
\displaystyle \frac{\partial }{\partial  0_{k+1}^+} \left( Z_{[0_{k+1}^+,0]} \eta_{\tilde{f}_L} \right)
\]
is bounded and 

\[
\big| \big| \displaystyle \frac{\partial }{\partial \eta_{\tilde{f}_L}} \left( Z_{[0_{k+1}^+,0]} \eta_{\tilde{f}_L} \right) \big| \big| = |0_{k+1}^{+}| \rightarrow 0
\]
when the level of renormalization tends to infinity. Hence, $\big| \big| \displaystyle \frac{\partial }{\partial \eta_{\tilde{f}_L}} \left( Z_{[0_{k+1}^+,0]} \eta_{\tilde{f}_L} \right) \big| \big|$ is as small as we desire. From (\ref{d 0k+1 d eta_L}) (in the proof of Lemma~\ref{lemma entries of B_f matrix}) we have

\[
\displaystyle \frac{\partial }{\partial \eta_L} \big( 0_{k+1}^{+} \big) \asymp |T_{0,L}| \cdot \big( 1- 1_{I_{0,L}}^{-1}(f_L^{k-1}(b-1)) \big),
\]
 which is also as small as we desire. Since $0_{k+1}^{+}=f_L^k(b-1)$ does not depend on $\varphi_R$ we have 
 
 \[
 \displaystyle \frac{\partial }{\partial \eta_R} \big( 0_{k+1}^{+} \big)=0.
 \]
 Hence, in order to prove that 
 \[
  \displaystyle \frac{\partial \tilde{\eta}_L}{\partial \eta_L} \hspace{1cm} \mbox{and} \hspace{1cm}  \displaystyle \frac{\partial \tilde{\eta}_L}{\partial \eta_R}
 \]
are tiny we just need to prove that 

\[
\displaystyle |0_{k+1}^{+}| \cdot \big| \frac{\partial }{\partial \eta_L} \left( \eta_{\tilde{f}_L} \right)\big| \hspace{1cm} \mbox{and} \hspace{1cm} \displaystyle |0_{k+1}^{+}| \cdot \big| \frac{\partial }{\partial \eta_R} \left( \eta_{\tilde{f}_L} \right) \big|
\]
are tiny. Since $\eta_{\tilde{f}_L}=N(\tilde{f}_L)=N(f_L^k \circ f_R \circ f_L) $ from (\ref{derivative of eta_{tilde{f}_L}}) we obtain

\begin{equation}
\label{d eta f_L tilde by d eta_L}
\begin{array}{ccl}
\displaystyle \frac{\partial }{\partial \eta_L} \big( \eta_{\tilde{f}_L} \big)     & = & \displaystyle \sum_{i=1}^{k}  \left\{ \frac{\partial }{\partial \eta_L} \big[ N f_L (f_L^{k-i} \circ f_R \circ f_L) \big] \cdot Df_L^{k-i} \circ f_R \circ f_L  \cdot D(f_R \circ f_L)  \right. \\
   & & \\ 
     &  & \hspace{0.7cm} \displaystyle + N f_L (f_L^{k-i} \circ f_R \circ f_L) \cdot \frac{\partial }{\partial \eta_L} \big[  Df_L^{k-i} \circ f_R \circ f_L \big] \cdot D(f_R \circ f_L) \\
     & &  \\
     & & \left. \hspace{0.7cm} \displaystyle + N f_L (f_L^{k-i} \circ f_R \circ f_L) \cdot Df_L^{k-i} \circ f_R \circ f_L \cdot \frac{\partial }{\partial \eta_L} \big[ D(f_R \circ f_L) \big] \right\} \\
     & & \\
     & & + \displaystyle \frac{\partial }{\partial \eta_L} \big[ Nf_R \circ f_L \big] \cdot Df_L + Nf_R \circ f_L \cdot \frac{\partial }{\partial \eta_L} \big[ Df_L \big] + \frac{\partial }{\partial \eta_L} \big[ Nf_L \big]. \\ 
\end{array}    
\end{equation}
Since our gap mappings $f=(f_L, f_R, b)$ have bounded Schwarzian derivative $Sf$ and bounded nonlinearity $Nf$, by the formula for the Schwarzian derivative of $f$

\[
\displaystyle Sf = D(Nf)- \frac{1}{2}(Nf)^2 
\]
we obtain that $D(Nf_{L})$ and $D(Nf_{R})$ are bounded.
%Hence, to conclude the result we just need to prove that $\displaystyle \frac{\partial }{\partial \eta_{\star}} \big[ Nf_{L}\big]$ and $\displaystyle \frac{\partial }{\partial \eta_{\star}} \big[ Nf_{R}\big]$ are bounded, for $\star \in \{ L, R \}$.
As
\[
Nf_{L} = \displaystyle \frac{1}{|I_{0, {L}}|} \cdot N \varphi_{L} \circ 1_{I_{0, {L}}} ^{-1} \hspace{1cm} \mbox{and} \hspace{1cm}
Nf_{R} = \displaystyle \frac{1}{|I_{0, {R}}|} \cdot N \varphi_{R} \circ 1_{I_{0, {R}}} ^{-1} 
\]
we have

\[
\displaystyle \frac{\partial }{\partial \eta_{\star}} \big[ Nf_{L}\big] = \frac{1}{|I_{0, {L}}|} \cdot \frac{\partial }{\partial \eta_{\star}} \big[ N \varphi_{L} \big] \circ 1_{I_{0, {L}}} ^{-1} = \frac{1}{|I_{0, {L}}|} \cdot \frac{\partial }{\partial \eta_{\star}} \big[ \eta_{\varphi_{L} } \big] \circ 1_{I_{0, {L}}} ^{-1},
\]
and

\[
\displaystyle \frac{\partial }{\partial \eta_{\star}} \big[ Nf_{R}\big] = \frac{1}{|I_{0, {R}}|} \cdot \frac{\partial }{\partial \eta_{\star}} \big[ N \varphi_{R} \big] \circ 1_{I_{0, {R}}} ^{-1} = \frac{1}{|I_{0, {R}}|} \cdot \frac{\partial }{\partial \eta_{\star}} \big[ \eta_{\varphi_{R} } \big] \circ 1_{I_{0, {R}}} ^{-1},
\]
where $\star \in \{ L, R \}$ and at this point we are calling $\eta_{\star} = \eta_{\varphi_{\star}}$ for sake of simplicity. As

\[
\displaystyle Df_L = \frac{|T_{0,L}|}{|I_{0,L}|} \cdot D \varphi_L
\hspace{1cm} \mbox{and} \hspace{1cm}
\displaystyle Df_R = \frac{|T_{0,R}|}{|I_{0,R}|} \cdot D \varphi_L,
\]
we obtain that the product
\[
\displaystyle \frac{\partial }{\partial \eta_L} \big[ N f_L (f_L^{k-i} \circ f_R \circ f_L) \big]  \cdot D(f_R \circ f_L)
\]
is bounded. From Corollary~\ref{corollary d Df g by d eta} we obtain that all the terms

\[
\displaystyle \frac{\partial }{\partial \eta_L} \big[  Df_L^{k-i} \circ f_R \circ f_L \big], \;\; \frac{\partial }{\partial \eta_L} \big[ D(f_R \circ f_L) \big] \;\; \mbox{and} \;\;\frac{\partial }{\partial \eta_L} \big[ Df_L \big]
\]
are also bounded. From Lemma~\ref{lemma derivatives of f_L and f_R} we obtain that 
\[
\displaystyle \frac{\partial }{\partial \eta_L} \big( f_L \big)
\]
is bounded. Furthermore, we know that
\[
\displaystyle |0_{k+1}^{+}| \cdot \frac{\partial }{\partial \eta_L} \big[ Nf_L \big] \longrightarrow 0
\]
when the level of renormalization tends to infinity. Hence, using Lemma~\ref{lemma derivatives of f_L and f_R}, Lemma~\ref{lemma d Nf by d eta}, Lemma~\ref{lemma d D varphi bu d eta} and Corollary~\ref{corollary d Df g by d eta} we conclude that 

\[
\displaystyle |0_{k+1}^{+}| \cdot \big| \frac{\partial }{\partial \eta_L} \left( \eta_{\tilde{f}_L} \right) \big|
\]
is tiny. Analogously, we obtain that

\[
\displaystyle |0_{k+1}^{+}| \cdot \big| \frac{\partial }{\partial \eta_L} \left( \eta_{\tilde{f}_R} \right) \big|
\]
is also tiny, which completes the proof of Lemma~\ref{lemma entries of D matrix}, as desired.

%\section{Rigidity}
%\input{sections/section05}

\section{Manifold structure of the conjugacy classes}
\subsection{Expanding and contracting directions of $D\underline{\mathcal{R}}_{\underline f}$}

Let $\underline f_n$ be the $n$-th renormalization of an infinitely renormalizable dissipative gap mapping in the decomposition space. In this section, we will assume that $\sigma_{f_n}=-$. The case when $\sigma_{f_n}=+$ is similar. For any $\varepsilon>0$, there exists $n_0\in\mathbb N$ so that for $n\geq n_0$ we have that

$$D\underline{\mathcal{R}}_{\underline  f_n}\asymp\left[\begin{array}{ccccc}
\varepsilon & \varepsilon & \varepsilon & \varepsilon &\varepsilon\\
\varepsilon & \varepsilon & \varepsilon & \varepsilon &\varepsilon\\
K_1 & K_2 & \frac{\partial \tilde b}{\partial b} & 
\frac{\partial \tilde b}{\partial \eta_L} & 0 \\ 
C_1 & \varepsilon & \frac{\partial\tilde \eta_L}{\partial b}
&\varepsilon &\varepsilon \\
C_2 & C_3 & \frac{\partial\tilde \eta_R}{\partial b} &\varepsilon &\varepsilon \\
\end{array}\right],$$
where $K_i$ are large for $i\in\{1,2\}$ and $C_j$
are bounded for $j\in\{1,2,3\}.$
We highlight the partial derivatives that will be important in
the following calculations.
Let 
$$
K_3=\partial \tilde b/\partial b, \quad 
    K_4={\partial \tilde b}/{\partial \eta_L}$$
  $$  M_1=\partial\tilde \eta_L/\partial b,\quad \mbox{ and } \quad
    M_2=\partial\tilde \eta_R/\partial b.
$$

\begin{prop}
For any $\delta>0$, there exists $n_0\in\mathbb N,$ so that
for all $n\geq n_0$, we have the following: 
\begin{itemize}
\item $T_{\underline{\mathcal{R}}_{\underline{\mathcal{R}}^n\underline f}} \underline{\mathcal{D}}=E^u\oplus E^s,$ and the subspace $E^u$ is one dimensional. 

\item For any vector $v\in E^u$, we have that 
$\|D\underline{\mathcal{R}}_{\underline{\mathcal{R}}^n\underline f}v\|\geq \lambda_1\|v\|$, where $|\lambda_1|>1/\delta$.
\item For any
 $v\in E^s$, we have that 
$\|D\underline{\mathcal{R}}_{\underline{\mathcal{R}}^n\underline f}v\|\leq \lambda\|v\|$, where $|\lambda|<\delta$.
\end{itemize}

\end{prop}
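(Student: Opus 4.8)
The plan is to read the hyperbolic splitting off the matrix form of $D\underline{\mathcal R}_{\underline f_n}$ assembled in Section~\ref{sec:derivative}, in the spirit of \cite{MP}. I work throughout on the renormalization--absorbing set, where $\alpha,\beta,b$ stay bounded away from $0$ and $1$ and, at deep levels, $\varphi_L,\varphi_R$ are $\mathcal C^3$--close to the identity.

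\textbf{Step 1 (normal form --- the crux).} Combining Lemmas~\ref{lemma partial derivatives of abb tilde respec to abb}, \ref{lemma entries of B_f matrix}, \ref{lemma for Cmatrix}, \ref{lemma entries of D matrix} with the proposition on convergence of renormalization to affine maps, I would show that for every $\varepsilon>0$ there is $n_0$ so that for $n\ge n_0$
\[
D\underline{\mathcal R}_{\underline f_n}\colon v\longmapsto L_n(v)\,e_b+E_n(v),
\]
where $e_b$ is the unit vector in the $b$--direction of $T\underline{\mathcal D}=\mathbb{R}^3\times X\times X$, $L_n$ is the bounded functional recording the $\tilde b$--row, $L_n(v)=\tfrac{\partial\tilde b}{\partial\alpha}\dot\alpha+\tfrac{\partial\tilde b}{\partial\beta}\dot\beta+\tfrac{\partial\tilde b}{\partial b}\dot b+\tfrac{\partial\tilde b}{\partial\eta_L}\dot\eta_L$, with $\|L_n\|\asymp 1/|I'_n|\to\infty$ (here $\tfrac{\partial\tilde b}{\partial b}\asymp 1/|I'_n|$ dominates the remaining coefficients), and $\|E_n\|<\varepsilon$. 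The one point that goes beyond Section~\ref{sec:derivative} is to upgrade the word ``bounded'' in Lemma~\ref{lemma for Cmatrix} to ``small'' for $\tfrac{\partial\tilde\eta_L}{\partial\alpha},\tfrac{\partial\tilde\eta_L}{\partial b},\tfrac{\partial\tilde\eta_R}{\partial\alpha},\tfrac{\partial\tilde\eta_R}{\partial b}$: since $\tilde\eta_L=Z_{[0_{k+1}^+,0]}\eta_{\tilde f_L}$, the zoom--curve formulas of Lemma~\ref{lemma Zoom derivatives} express these in terms of $|0^+_{k+1}|\to0$ and of $\eta_{\tilde f_L}=N(f_L^k\circ f_R\circ f_L)$, and the latter is small because $Nf_L,Nf_R$ are small ($\varphi_L,\varphi_R$ near the identity) while dissipativity $|Df|\le\nu<1$ turns the chain rule for $N$ into a convergent geometric series. \emph{I expect this normal form to be the main obstacle.}

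\textbf{Step 2 (the two subspaces).} With the normal form, $D\underline{\mathcal R}_{\underline f_n}$ is approximately rank one with image near $\langle e_b\rangle$, and $L_n(e_b)=\tfrac{\partial\tilde b}{\partial b}\ne0$; dually $D\underline{\mathcal R}_{\underline f_n}^{*}\phi=\phi(e_b)L_n+E_n^{*}\phi$ is approximately rank one with image near $\langle L_n\rangle$. For the unstable line I introduce the cone $C^u=\{\,|\dot b|\ge M(|\dot\alpha|+|\dot\beta|+\|\dot\eta_L\|+\|\dot\eta_R\|)\,\}$ with $M$ a fixed large constant; for $v\in C^u$ one has $|L_n(v)|\asymp\|v\|/|I'_n|$ while the other coordinates of $D\underline{\mathcal R}_{\underline f_n}v$ are $O(\varepsilon\|v\|)$, so its image lies in a cone of angle $O(\varepsilon|I'_n|)$ about $\langle e_b\rangle$, hence in $C^u$ for $n_0$ large. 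Thus $C^u$ is forward invariant, the induced projective action contracts strongly toward $[e_b]$, and setting $E^u_{n_0}=\langle e_b\rangle$, $E^u_{n+1}=D\underline{\mathcal R}_{\underline f_n}E^u_n$ yields a one--dimensional $D\underline{\mathcal R}$--invariant family with $E^u_n$ within $O(\varepsilon|I'_{n-1}|)$ of $\langle e_b\rangle$. For the stable subspace, the backward recursion $\hat L_n\parallel D\underline{\mathcal R}_{\underline f_n}^{*}\hat L_{n+1}$ admits, for the future--infinite sequence $(\underline f_n)_{n\ge n_0}$, a unique solution up to scaling with $\hat L_n$ within $O(\varepsilon|I'_n|)$ of $[L_n]$; I set $E^s_n=\ker\hat L_n$, a closed subspace of codimension one with $D\underline{\mathcal R}_{\underline f_n}E^s_n\subseteq E^s_{n+1}$ by construction.

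\textbf{Step 3 (estimates and transversality).} Since $\hat L_n(e_b)\asymp L_n(e_b)\ne0$, a one--line induction from level $n_0$ (using $D\underline{\mathcal R}_{\underline f_n}^{*}\hat L_{n+1}\parallel\hat L_n$) gives $E^u_n\cap E^s_n=\{0\}$, so the line $E^u_n$ and the closed hyperplane $E^s_n$ span $T\underline{\mathcal D}$. On $E^u_n$ one has $\|D\underline{\mathcal R}_{\underline f_n}v\|\ge|L_n(v)|\ge\tfrac12\|L_n\|\,\|v\|>(1/\delta)\|v\|$ once $|I'_n|$ is small enough, giving $|\lambda_1|>1/\delta$. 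On $E^s_n=\ker\hat L_n$ one has $|L_n(v)|=|(L_n-\hat L_n)(v)|\le\|L_n-\hat L_n\|\,\|v\|=O(\varepsilon)\|v\|$ (with matched normalisations $L_n,\hat L_n$ have norm $\asymp 1/|I'_n|$ and differ in direction by $O(\varepsilon|I'_n|)$), so $\|D\underline{\mathcal R}_{\underline f_n}v\|\le|L_n(v)|+\|E_n(v)\|=O(\varepsilon)\|v\|<\delta\|v\|$, giving $|\lambda|<\delta$. (If only a splitting at each level were required, one could simply take $E^u_n=\langle e_b\rangle$ and $E^s_n=\ker L_n$, for which the two estimates are immediate.) Beyond Step~1, the residual difficulty is the bookkeeping needed to run this cone/graph--transform argument in a non--autonomous setting where the hyperbolicity constants degenerate ($\|L_n\|\to\infty$, $\|E_n\|\to0$) rather than being uniform.
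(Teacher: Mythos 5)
Your overall strategy is genuinely different from the paper's: you try to put $D\underline{\mathcal R}_{\underline f_n}$ into an approximately rank--one normal form $v\mapsto L_n(v)e_b+E_n(v)$ with $\|E_n\|<\varepsilon$ and then build the splitting by an invariant cone for $E^u$ and a backward graph transform on covectors for $E^s$. The paper instead keeps the block form of Section~\ref{sec:derivative} exactly as its lemmas deliver it -- with the entries $\frac{\partial\tilde\eta_L}{\partial\alpha},\frac{\partial\tilde\eta_R}{\partial\alpha},\frac{\partial\tilde\eta_L}{\partial b},\frac{\partial\tilde\eta_R}{\partial b}$ only \emph{bounded}, not small -- and computes the characteristic polynomial of the reduced $5\times5$ matrix, $\lambda^3\bigl(\lambda^2-K_3\lambda-K_4M_1\bigr)$, identifying one root $\approx K_3=\partial\tilde b/\partial b\asymp 1/|I'|$ (large) and one root $\approx -K_4M_1/K_3$. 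The decisive input that makes the second root small is the comparison of (\ref{eqn:57}) with (\ref{eqn:72}): $|K_4/K_3|\leq Cb$ because $\partial\tilde b/\partial\eta_L\asymp b/|I'|$ while $\partial\tilde b/\partial b\asymp1/|I'|$, and $b\to0$ at deep renormalization levels. Your argument never uses this ratio; it is the mechanism by which the paper tolerates a merely bounded $M_1=\partial\tilde\eta_L/\partial b$.

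The genuine gap is exactly the step you flag as the crux. Lemma~\ref{lemma for Cmatrix} and Corollary~\ref{cor:bdd N} only give \emph{boundedness} of $\partial\tilde\eta_{L,R}/\partial\alpha$ and $\partial\tilde\eta_{L,R}/\partial b$, and your heuristic for upgrading this to smallness does not close: smallness of $\eta_{\tilde f_L}=N(f_L^k\circ f_R\circ f_L)$ does not imply smallness of its \emph{parameter derivative} $\partial\eta_{\tilde f_L}/\partial b$ (a family of uniformly small functions can vary fast in the parameter), and the other term in the chain rule, $\frac{\partial}{\partial 0^+_{k+1}}\bigl(Z_{[0^+_{k+1},0]}\eta_{\tilde f_L}\bigr)\cdot\frac{\partial 0^+_{k+1}}{\partial b}$, contains $-\eta_{\tilde f_L}\circ 1_I$ with no factor of $|I|$ and, by Lemma~\ref{lemma Zoom derivatives}, is only controlled after losing one degree of regularity ($|\partial Z_{[a,b]}\varphi/\partial a|_2\leq2|\varphi|_3$), which matters because the norm on $X$ is the $\mathcal C^1$--nonlinearity norm. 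If the smallness claim fails and $M_1,C_1$ are genuinely of order one, your Step~3 stable estimate breaks: a unit vector $v\in\ker\hat L_n$ with $|\Delta b|\asymp|\Delta\alpha|$ (such vectors exist since $K_1/K_3$ is of order one by Lemma~\ref{lemma partial derivatives of abb tilde respec to abb}) is sent to a vector whose $\eta_L$--component is $\asymp(C_1+M_1)|\Delta\alpha|=O(\|v\|)$, not $O(\varepsilon)\|v\|$, so you cannot get $|\lambda|<\delta$ from $\|E_n\|$ alone. To repair the proof along your lines you would either have to actually prove the smallness of the $C$-- and $D$--block entries (substantial extra work beyond Section~\ref{sec:derivative}), or reorganize the stable direction the way the paper does, so that the only dangerous coupling enters through the product $K_4M_1$, which is killed by $K_4/K_3\leq Cb$.
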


\begin{proof}
By taking $n$ large, we can assume that $\varepsilon$ is arbitrarily small.
To see that for $\varepsilon$ sufficiently small 
the tangent space admits a hyperbolic splitting, it is enough to check that this holds for the matrix:
$$\left[\begin{array}{ccccc}
0 & 0 & 0 & 0& 0\\
0& 0 & 0& 0& 0\\
K_1 & K_2 & K_3 & K_4 & 0\\ 
C_1 & 0 & M_1 &0&0\\
C_2 & C_3 & M_2 &0&0 \\
\end{array}\right].$$
Calculating
$$\mathrm{det}\left[\begin{array}{ccccc}
\lambda & 0 & 0 & 0& 0\\
0& \lambda & 0& 0& 0\\
-K_1 & -K_2 & \lambda-K_3 & -K_4 & 0\\ 
C_1 & 0 & -M_1 &\lambda&0\\
C_2 & C_3 & -M_2 &0&\lambda \\
\end{array}\right]=\lambda^2\mathrm{det}
\left[\begin{array}{ccc}
 \lambda-K_3& -K_4 & 0\\ 
 -M_1 &\lambda&0\\
-M_2 &0&\lambda \\
\end{array}\right]$$
$$= \lambda^2\big(     
(\lambda-K_3)\lambda^2+K_4(-M_1\lambda))
\big)$$
$$=\lambda^3 \big(     
(\lambda-K_3)\lambda+K_4(-M_1)
\big)$$
has zero as a root with multiplicity three, and the remaining roots are
the zeros of the quadratic polynomial
$\lambda^2- K_3\lambda -K_4 M_1,$
which are given by
$$\frac{K_3\pm\sqrt{K_3^2+4K_4 M_1}}{2}.$$
We immediately see that 
$\frac{K_3+\sqrt{K_3^2+4K_4 M_1}}{2}$ is much bigger than one, when $K_3=\partial \tilde b/\partial b$ is large.

Now, we show that
$$\Big|\frac{K_3-\sqrt{K_3^2+4K_4 M_1}}{2}\Big|
=\frac{\sqrt{K_3^2+4K_4 M_1}-K_3}{2}
$$
is small.

We have that
$$\frac{\sqrt{K_3^2+4K_4 M_1}-K_3}{2}=
\frac{K_3}{2}\Big(\sqrt{1+4\frac{K_4 M_1}{K_3^2}}-1\Big).$$

By equations~(\ref{eqn:57}) and (\ref{eqn:72}), we have
that 
$$\Big|\frac{K_4}{K_3}\Big|\leq \frac{b/|I'|+C'}{1/3|I|'}\leq C b,$$
where $C,C'$ are bounded.
For deep renormalizations we have that $b$ is arbitrarily close to zero,
for otherwise $0$ is contained in the gap $(f_R(b), f_L(b-1)),$
which is close to $(b-1,b)$ at deep renormalization levels.

Thus we have that
$$\frac{K_3}{2}\Big(\sqrt{1+4\frac{K_4 M_1}{K_3^2}}-1\Big)
\leq \frac{K_3}{2}\Big(\sqrt{1+4Cb\frac{M_1 +M_2}{K_3}}-1\Big).$$
For large $K_3$, by L'Hopital's rule, we have that this is approximately,
$$Cb \frac{M_1 + M_2}{\sqrt{1+4Cb \frac{M_1+M_2}{K_3}}}.$$
Finally by Corollary~\ref{cor:bdd N}, we have that
$M_1+M_2$ is bounded. Hence for deep renormalizations, 
$$\Big|\frac{K_3-\sqrt{K_3^2+4K_4 M_1}}{2}\Big|$$
is close to zero.
\end{proof}

\subsection{Cone Field}
Recall our expression of
$$D\underline{\mathcal{R}}_{\underline f_n}\quad\mbox{as}\quad
\left[\begin{array}{cc} A_{\underline f_n} & B_{\underline f_n}\\ C_{\underline f_n} & D_{\underline f_n}
\end{array} \right].$$
We will omit the subscripts when it will not cause confusion.

For $r\in(0,1)$, we define the cone
$$C_r=\{(\Delta\alpha,\Delta\beta,\Delta b)\in(0,1)^3:\Delta\alpha+\Delta\beta\leq r\Delta b \}.$$
Note that we regard cones as being contained in the tangent space of the
decomposition space.

\begin{lemma}\label{lem:cone1}
For any $\lambda_0>1,$ and every $r\in(0,1),$
there exists $n_0$, so that for all $n\geq n_0$
the cone $C_r$ is invariant and expansive; that is,
\begin{itemize}
\item $A_{\underline f_n}(C_r)\subset C_{r/3},$ and
\item if $v\in C_r$, then $|A_{\underline f_n}v|>\lambda_0|v|.$
\end{itemize}
\end{lemma}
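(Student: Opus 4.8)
The plan is to read the two assertions off the matrix description of $A_{\underline f_n}$ supplied by Lemma~\ref{lemma partial derivatives of abb tilde respec to abb}: at deep renormalization levels the first two rows of $A_{\underline f_n}$ are uniformly small, while the third row is large and, crucially, its diagonal entry $\partial\tilde b/\partial b$ is positive and dominates $|\partial\tilde b/\partial\alpha|$ and $|\partial\tilde b/\partial\beta|$. The cone $C_r$ is precisely the set of directions on which this large bottom row produces both strong expansion and a large, sign-definite new $\Delta b$-component, which is what forces the image into a thinner cone.

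Fix $r\in(0,1)$ and $\lambda_0>1$. By Lemma~\ref{lemma partial derivatives of abb tilde respec to abb}, given any $\varepsilon>0$ and $K>0$ — to be pinned down at the end in terms of $r$ and $\lambda_0$ only — there is $n_0$ such that for all $n\geq n_0$, with $\underline f_n=\underline{\mathcal{R}}^n\underline g$, every entry of the first two rows of $A_{\underline f_n}$ has absolute value at most $\varepsilon$, and $\partial\tilde b/\partial b\asymp 1/|I'|$ with $\partial\tilde b/\partial b\geq K$; moreover, as established inside the proof of that lemma, $\partial\tilde b/\partial b>0$ and $|\partial\tilde b/\partial\alpha|,\,|\partial\tilde b/\partial\beta|\leq\partial\tilde b/\partial b$.

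Now take $v=(\Delta\alpha,\Delta\beta,\Delta b)\in C_r$, so $\Delta\alpha,\Delta\beta,\Delta b>0$ and $\Delta\alpha+\Delta\beta\leq r\,\Delta b$; in any fixed norm on $\mathbb{R}^3$ one has $\Delta b\leq\|v\|\leq 2\,\Delta b$. Write $A_{\underline f_n}v=(\widetilde{\Delta\alpha},\widetilde{\Delta\beta},\widetilde{\Delta b})$. The first two rows give $|\widetilde{\Delta\alpha}|,\,|\widetilde{\Delta\beta}|\leq\varepsilon(\Delta\alpha+\Delta\beta+\Delta b)\leq 2\varepsilon\,\Delta b$, and the domination of the bottom row yields
$$\widetilde{\Delta b}=\frac{\partial\tilde b}{\partial\alpha}\Delta\alpha+\frac{\partial\tilde b}{\partial\beta}\Delta\beta+\frac{\partial\tilde b}{\partial b}\Delta b\geq\frac{\partial\tilde b}{\partial b}\bigl(\Delta b-(\Delta\alpha+\Delta\beta)\bigr)\geq(1-r)\frac{\partial\tilde b}{\partial b}\,\Delta b>0.$$
For expansiveness, $\|A_{\underline f_n}v\|\geq\widetilde{\Delta b}\geq(1-r)(\partial\tilde b/\partial b)\,\Delta b\geq\frac{1}{2}(1-r)K\,\|v\|$, so choosing $K>2\lambda_0/(1-r)$ gives $\|A_{\underline f_n}v\|>\lambda_0\|v\|$. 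For invariance, $|\widetilde{\Delta\alpha}|+|\widetilde{\Delta\beta}|\leq 4\varepsilon\,\Delta b\leq\frac{4\varepsilon}{(1-r)\,\partial\tilde b/\partial b}\,\widetilde{\Delta b}\leq\frac{4\varepsilon}{(1-r)K}\,\widetilde{\Delta b}$; having fixed $K$, we then take $\varepsilon$ small enough that $\frac{4\varepsilon}{(1-r)K}\leq r/3$, so that $A_{\underline f_n}v\in C_{r/3}$ (the cone read in the tangent space, with $\widetilde{\Delta b}>0$ dominant). Enlarging $n_0$ to realize these choices of $K$ and $\varepsilon$ completes the proof.

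The only step that is not routine bookkeeping is the sign-and-domination claim $0<|\partial\tilde b/\partial\alpha|,\,|\partial\tilde b/\partial\beta|\leq\partial\tilde b/\partial b$ at deep levels, which is exactly what rules out cancellation in $\widetilde{\Delta b}$ and hence gives sign-definiteness of the new $\Delta b$-component; it rests on the comparison near $0$ between $\partial f_L/\partial\alpha$, $\partial f_L/\partial\beta$ and $\partial f_L/\partial b$ (Lemma~\ref{lemma derivatives of f_L and f_R}), propagated through the boundary operators $0_{k+1}^{+}$ and $0_{k+2}^{-}$ as in the proof of Lemma~\ref{lemma partial derivatives of abb tilde respec to abb}. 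Everything else is the interplay of the three scales $\varepsilon\to 0$, $r$ fixed, and $\partial\tilde b/\partial b\to\infty$.
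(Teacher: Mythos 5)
Your proof is correct and follows essentially the same route as the paper: read off from Lemma~\ref{lemma partial derivatives of abb tilde respec to abb} that the first two rows of $A_{\underline f_n}$ are $O(\varepsilon)$ while the bottom row is of order $1/|I'|$, and then carry out the cone bookkeeping. You are in fact slightly more careful than the paper's own argument, which simply writes $|\Delta\tilde b|\geq K_3|\Delta b|$ without comment; your explicit use of the sign and domination of the bottom-row entries to rule out cancellation (yielding the factor $1-r$) fills in that step.
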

\begin{proof}
For all $n$ sufficiently large we have that 
$A_{\underline f_n}$ is of the order
$$\left[\begin{array}{ccc}
\varepsilon & \varepsilon & \varepsilon \\
\varepsilon & \varepsilon & \varepsilon \\
K_1 & K_2 & \frac{\partial \tilde b}{\partial b}
\end{array}\right]
$$

Let $\Delta v=(\Delta\alpha,\Delta \beta,\Delta b)\in C_r,$
and $\Delta\tilde v=(\Delta\tilde\alpha,\Delta\tilde\beta,\Delta\tilde b)=A_{\underline f_n}\Delta v$.

To see that the cone is invariant, we estimate
$$\frac{|(\Delta \tilde\alpha,\Delta\tilde\beta)|}{|\Delta \tilde b|}\leq \frac{2\varepsilon(|\Delta\alpha +\Delta\beta+\Delta b| )}{K_3|\Delta b|}\leq r/3,$$
provided that
$$\frac{1+r}{r}\leq \frac{K_3}{6\varepsilon}.$$

To see that the cone is expansive, we estimate
$$\frac{|\Delta \tilde v|}{|\Delta v|}
\geq\frac{|\Delta\tilde b|}{|\Delta\alpha +\Delta \beta|+|\Delta b|}
\geq \frac{K_3|\Delta b|}{(1+r)|\Delta b|}=\frac{K_3}{1+r}
\geq \lambda_0$$ when $K_3$ is sufficiently large.
\end{proof}

\begin{lemma}\label{lem:cone2}
For all $0<r<1/2$ and every $\lambda>0$, there exists $\delta>0$
such that
$$C_{r,\delta}=\{\underline f\in\underline{\mathcal{D}}:|\Delta\eta_{L}|,|\Delta\eta_R|\leq \delta\Delta b,
\Delta\alpha+\Delta\beta<r\Delta b\}$$
is a cone field in the decomposition space. Moreover, if $\underline f\in\underline{\mathcal{D}}$ is an infinitely renormalizable dissipative gap
 mapping, 
then for all $n$ sufficiently big
\begin{itemize}
\item $D\underline{\mathcal{R}}_{\underline f_n}(C_{r,\delta})\subset C_{r/2,\delta/2},$ and
\item if $v\in C_{r,\delta}$, then $|D\underline{\mathcal{R}}_{\underline f}v|>\lambda|v|.$
\end{itemize}
\end{lemma}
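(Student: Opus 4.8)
The plan is to mimic the structure of the proof of Lemma~\ref{lem:cone1}, but now keeping track of the diffeomorphic parts $\underline\varphi_L,\underline\varphi_R$ (equivalently, the nonlinearities $\eta_L,\eta_R$) as well. First I would fix $0<r<1/2$ and $\lambda>0$, apply Lemma~\ref{lem:cone1} with some auxiliary constant $\lambda_0>\max\{\lambda,1\}$ to obtain $n_1$ past which $A_{\underline f_n}(C_r)\subset C_{r/3}$ and $A_{\underline f_n}$ expands vectors in $C_r$ by a factor $\geq\lambda_0$. Then, using the block decomposition
$$D\underline{\mathcal R}_{\underline f_n}=\left[\begin{array}{cc} A_{\underline f_n} & B_{\underline f_n}\\ C_{\underline f_n} & D_{\underline f_n}\end{array}\right],$$
I would invoke Lemmas~\ref{lemma entries of B_f matrix}, \ref{lemma for Cmatrix}, and \ref{lemma entries of D matrix}: for any $\varepsilon>0$ there is $n_2$ so that for $n\geq n_2$, all entries of $B_{\underline f_n}$ are bounded by $\varepsilon$ except $\partial\tilde b/\partial\eta_L\asymp b/|I'|$ (which is itself small since $b\to 0$ at deep levels, as noted in the proof of the previous proposition), the $\beta$-column of $C_{\underline f_n}$ is bounded by $\varepsilon$ while the other entries of $C_{\underline f_n}$ are merely bounded, and all entries of $D_{\underline f_n}$ are bounded by $\varepsilon$. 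The parameter $\delta$ will be chosen at the end, small relative to $r$ and to the (bounded) $\mathcal C^1$-norms appearing in $C_{\underline f_n}$.

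Next I would verify the two bulleted conclusions by a direct estimate on components. Write $\Delta v=(\Delta\alpha,\Delta\beta,\Delta b,\Delta\eta_L,\Delta\eta_R)\in C_{r,\delta}$ and $\Delta\tilde v=D\underline{\mathcal R}_{\underline f_n}\Delta v$ with components $(\Delta\tilde\alpha,\Delta\tilde\beta,\Delta\tilde b,\Delta\tilde\eta_L,\Delta\tilde\eta_R)$. For the $\tilde b$-component I would show $\Delta\tilde b$ is comparable to $K_3\Delta b$: the dominant contribution is $\frac{\partial\tilde b}{\partial b}\Delta b$ with $|\partial\tilde b/\partial b|\asymp 1/|I'|$ large, while the cross terms $\frac{\partial\tilde b}{\partial\alpha}\Delta\alpha+\frac{\partial\tilde b}{\partial\beta}\Delta\beta+\frac{\partial\tilde b}{\partial\eta_L}\Delta\eta_L$ are controlled by $(K_1+K_2)r\Delta b + (b/|I'|)\delta\Delta b$, all of which is $o(1)$ relative to $K_3\Delta b$ once $r$ is fixed and $n$ is large. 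Hence $|\Delta\tilde b|\geq \tfrac12 K_3|\Delta b|$ and $|\Delta\tilde v|\geq|\Delta\tilde b|\geq\tfrac12 K_3|\Delta b|\geq\lambda|\Delta v|$ for $n$ large, since $|\Delta v|\leq(1+r+2\delta)|\Delta b|$ — this gives expansiveness. For invariance I would estimate $|\Delta\tilde\alpha|+|\Delta\tilde\beta|$ using the top two rows of $A$ and $B$: each is bounded by $\varepsilon(|\Delta\alpha|+|\Delta\beta|+|\Delta b|)+\varepsilon(|\Delta\eta_L|+|\Delta\eta_R|)\leq \varepsilon(1+r+2\delta)|\Delta b|$, so $|\Delta\tilde\alpha|+|\Delta\tilde\beta|\leq \tfrac{r}{2}|\Delta\tilde b|$ once $\varepsilon/K_3$ is small enough. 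Finally for the nonlinearity components I would use rows of $C$ and $D$: $|\Delta\tilde\eta_L|,|\Delta\tilde\eta_R|\leq (C_1+C_2+C_3)(|\Delta\alpha|+|\Delta b|)+\varepsilon(|\Delta\eta_L|+|\Delta\eta_R|)$; since the $\beta$-entries of $C$ are $\varepsilon$-small, and $|\Delta\alpha|\leq r\Delta b$, this is bounded by $(\text{const})\cdot\Delta b+2\varepsilon\delta\Delta b$, which I need to be $\leq \tfrac{\delta}{2}|\Delta\tilde b|\asymp\tfrac{\delta}{2}K_3|\Delta b|$ — true for $n$ large because $K_3\to\infty$ while the constant multiplying $\Delta b$ is fixed.

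The main obstacle — and the reason the statement quantifies ``$\delta$ depending on $r,\lambda$'' rather than letting $\delta$ be arbitrary — is the $\tilde\eta$-rows: the entries $\partial\tilde\eta_L/\partial\alpha$, $\partial\tilde\eta_R/\partial\alpha$, $\partial\tilde\eta_L/\partial b$, $\partial\tilde\eta_R/\partial b$ are only \emph{bounded}, not small, so the output $\Delta\tilde\eta$ picks up a contribution of fixed size proportional to $\Delta b$. To absorb this into $\delta\Delta\tilde b$ one must exploit that $\Delta\tilde b$ is \emph{large} (of order $K_3\Delta b$ with $K_3\to\infty$), so the ratio $|\Delta\tilde\eta|/|\Delta\tilde b|$ is forced below $\delta/2$ purely by taking $n$ large — but this only works if $\delta$ was not already chosen too small relative to how fast $K_3$ grows, i.e. we must pick $\delta$ first and then $n_0=n_0(r,\lambda,\delta)$. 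I would make sure the order of quantifiers is stated carefully: given $r,\lambda$, choose $\delta$ small enough that $C_{r,\delta}$ is a genuine (nonempty, convex) cone field and that the invariance bound $r/2$ leaves room, then choose $n_0$ large enough that $K_3$ dominates all the bounded constants; the contraction of $\delta$ to $\delta/2$ then follows. A minor secondary point is checking that $C_{r,\delta}\subset(0,1)^3\times X\times X$ is preserved as an honest cone (closed under positive scaling, and the defining inequalities are homogeneous of degree one), which is immediate from the definitions.
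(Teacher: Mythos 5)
Your plan is structurally the same as the paper's: decompose $D\underline{\mathcal{R}}_{\underline f_n}$ into the $2\times 2$ block form, invoke Lemma~\ref{lem:cone1} and the entry estimates from Lemmas~\ref{lemma entries of B_f matrix}, \ref{lemma for Cmatrix}, \ref{lemma entries of D matrix}, and then verify invariance and expansion componentwise, with all the work happening in the $\tilde b$-coordinate because $\partial\tilde b/\partial b$ blows up. The quantifier discussion (fix $r,\lambda$, pick $\delta$, then take $n_0$ large depending on all three) is also exactly what the paper does.

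One step as you have stated it would not survive scrutiny: you claim the cross terms in $\Delta\tilde b$, namely $K_1\Delta\alpha+K_2\Delta\beta$ bounded by $(K_1+K_2)r\Delta b$, are ``$o(1)$ relative to $K_3\Delta b$ once $r$ is fixed and $n$ is large.'' This is false as written, since $K_1,K_2$ and $K_3=\partial\tilde b/\partial b$ are \emph{all} of order $1/|I'|$ (Lemma~\ref{lemma partial derivatives of abb tilde respec to abb} shows all three are large), so $(K_1+K_2)r\Delta b$ is of the \emph{same} order as $K_3\Delta b$, not smaller; what protects you is only that $r<1/2$ bounds the possible cancellation. The paper sidesteps this sign/cancellation issue entirely: it first applies $A$ alone, writes $(\Delta\hat\alpha,\Delta\hat\beta,\Delta\hat b)=A\Delta X$, and deduces $|\Delta\hat b|\geq\tfrac34\lambda_0|\Delta b|$ directly from the two conclusions of Lemma~\ref{lem:cone1} (expansion gives $|\Delta\hat\alpha|+|\Delta\hat\beta|+|\Delta\hat b|\geq\lambda_0|\Delta b|$, invariance gives $|\Delta\hat\alpha|+|\Delta\hat\beta|\leq\tfrac r3|\Delta\hat b|$), never touching the internal structure of $K_1\Delta\alpha+K_2\Delta\beta+K_3\Delta b$; it then passes from $\Delta\hat b$ to $\Delta\tilde b$ using only the smallness of $B\Delta\Phi$, obtaining $|\Delta\tilde b|\geq\tfrac{\lambda_0}{2}|\Delta b|$. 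You already invoke Lemma~\ref{lem:cone1} with an auxiliary $\lambda_0$, so you have this route available --- you should use its conclusion here rather than the ``dominant contribution'' computation. With that substitution the rest of your estimates (the $\tilde\alpha,\tilde\beta$ rows via $\varepsilon$-smallness of $A$'s top block and $B$; the $\tilde\eta$ rows via boundedness of $C,D$ and the largeness of $\Delta\tilde b$; the norm lower bound $|\Delta\tilde v|\geq|\Delta\tilde b|$ for expansion) go through exactly as in the paper.
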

\begin{proof}
Set $\Delta v=(\Delta \alpha, \Delta \beta, \Delta b, 
\Delta \eta_L,\Delta \eta_R),$
$\Delta X=(\Delta \alpha, \Delta \beta, \Delta b),$ and
$\Delta \Phi=(\Delta \eta_L,\Delta \eta_R).$ As before, we
mark the corresponding objects under renormalization with a
tilde.
Then we have that 
$$D\underline{\mathcal{R}}_{\underline f_n}(\Delta v)=
\left[\begin{array}{cc}
A & B\\
C & D
\end{array}\right]
\left[\begin{array}{c} \Delta X \\ \Delta\Phi\end{array}\right]=
\left[\begin{array}{c}
A\Delta X + B\Delta\Phi\\
C\Delta X+ D\Delta\Phi\end{array}\right].$$

We let $(\Delta\hat\alpha, \Delta\hat\beta, \Delta \hat b)=A\Delta X.$

First, we show that $|\Delta \tilde b|$ is much bigger than
$\Delta b.$ By Lemma~\ref{lem:cone1}, we have that
$$|\Delta\hat\alpha|+ |\Delta\hat\beta|+|\Delta \hat b|
\geq\lambda_0(|\Delta\alpha|+ |\Delta\beta|+|\Delta b|)
\quad\mbox{and}\quad |\Delta\hat\alpha|+|\Delta\hat\beta|\leq\frac{r}{3}|\Delta\hat b|,$$
where we can take $\lambda_0>0$ arbitrarily large.
Thus we have that $(1+r/3)|\Delta \hat b|\geq \lambda_0|\Delta b|,$
and so, since $r\in(0,1),$ 
$$|\Delta\hat b|\geq\frac{3}{4}\lambda_0|\Delta b|.$$
To see that  $|\Delta \tilde b|$ is much bigger than
$\Delta b$ observe that $|\Delta \tilde b-\Delta \hat b|\leq
\varepsilon(\Delta \eta_L+\Delta \eta_R)<2\varepsilon\delta |\Delta b|.$
So 
\begin{eqnarray*}
|\Delta \tilde b|=|\Delta \tilde b-\Delta \hat b+\Delta \hat b|
\geq |\Delta\hat b|-|\Delta\tilde b-\Delta \hat b|\\
\geq \frac{3}{4}\lambda_0|\Delta b|-2\varepsilon\delta |\Delta b|
\geq \frac{\lambda_0}{2}|\Delta b|,
\end{eqnarray*}
when $\lambda_0$ is large enough.

Now, we prove that the cone is invariant. First of all, we have
$$|\Delta\tilde\alpha+\Delta\tilde\beta|
\leq |\Delta\hat\alpha+\Delta\hat\beta|+|B\Delta\Phi|
\leq \frac{r}{3}|\Delta \hat b|+2\varepsilon\delta |\Delta b|$$
$$
\leq \frac{r}{3}|\Delta \tilde b|+4\varepsilon\delta |\Delta b|
\leq \frac{r}{3}|\Delta \tilde b|+\frac{8\varepsilon\delta}{\lambda_0} |\Delta \tilde b|
\leq \frac{r}{2}|\Delta\tilde b|,$$
for $\lambda_0$ large enough.
Second, we have that 
$$\Delta \tilde \Phi = C\Delta X +D\Delta\Phi,$$ where the entries of
$C$ and $D$ are bounded, say by $K>0$,
so that
\begin{eqnarray*}
|\Delta\tilde \eta_L|+|\Delta\tilde \eta_R|
&\leq& K(|\Delta\alpha| +|\Delta \beta|+|\Delta b| +
|\Delta \eta_L|+|\Delta\eta_R|)\\
&\leq& K(1+r+\delta)|\Delta b|\\
&\leq& 2\frac{K(1+r+\delta)}{\lambda_0}|\Delta \tilde b|\\
&\leq &\frac{\delta}{2}|\Delta\tilde b|,
\end{eqnarray*}
for $\lambda_0$ sufficiently large.

Now let us show that the cone is expansive.
\begin{eqnarray*}
|D\underline{\mathcal{R}}_{\underline f_n}\Delta v|&\geq& |A\Delta X+D\Delta\Phi|\geq |A\Delta X|-|B\Delta\Phi|\\
&\geq& \lambda_0|\Delta X|-\varepsilon\delta|\Delta b|\\
&\geq&\lambda_0(|\Delta b|-|\Delta\alpha+\Delta\beta|)
-\varepsilon\delta|\Delta b|\\
&\geq&\lambda_0(|\Delta b|-r|\Delta b|)-\varepsilon\delta|\Delta b|\\
&\geq& (\lambda_0(1-1/2)-\varepsilon\delta)|\Delta b|\\
&\geq & \frac{\lambda_0}{3}|\Delta b|,
\end{eqnarray*}
for $\delta$ small enough.
We also have that 
$$|\Delta v|\leq |\Delta\alpha| +|\Delta \beta|+|\Delta b| +|\Delta \eta_L|+|\Delta\eta_R|$$
$$\leq (r+1+\delta)|\Delta b|.$$
Hence
$$\frac{\Delta \tilde v}{\Delta v}\geq\frac{\lambda_0/3}{r+1+\delta},$$
which we can take as large as we like.
\end{proof}

\begin{lemma}
\label{Technical Lemma}
Let $\underline{f} \in \underline{\mathcal{D}}$ be a renormalizable dissipative gap mapping. If $\Delta \tilde{v} = D \underline{\mathcal{R}}_{\underline{f}} \big( \Delta v \big) \notin C_{r, \delta}$, then there exists a constant $K>0$ such that

\begin{enumerate}
    \item[(i)] $| \Delta b | \leq K \cdot |I '| \cdot ||\Delta v ||$,
    \\
    \item[(ii)] $|| \Delta \tilde{v} || \leq K ||\Delta v||$,
\end{enumerate}
where $I'$ is the domain of the renormalization $\underline{\mathcal{R}}_{\underline{f}}$ before rescaling which is defined on page 11.
\end{lemma}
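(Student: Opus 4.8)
The statement is a quantitative dichotomy: if the image vector $\Delta\tilde v$ escapes the cone field $C_{r,\delta}$, then both (i) the $b$-component $\Delta b$ of the original vector must have been comparably small (of order $|I'|\cdot\|\Delta v\|$) and (ii) the renormalization derivative does not expand $\Delta v$ by more than a bounded factor. The idea is to read off both facts from the block structure of $D\underline{\mathcal R}_{\underline f}$ established in Section~4, using the estimates on $A_{\underline f}$, $B_{\underline f}$, $C_{\underline f}$, $D_{\underline f}$ from Lemmas~\ref{lemma partial derivatives of abb tilde respec to abb}--\ref{lemma entries of D matrix}, together with the cone-invariance Lemma~\ref{lem:cone2}. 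The crucial observation is the contrapositive of Lemma~\ref{lem:cone2}: the cone $C_{r,\delta}$ is forward-invariant, so if $\Delta\tilde v\notin C_{r,\delta}$ then necessarily $\Delta v\notin C_{r,\delta}$.

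First I would prove (i). Suppose $\Delta\tilde v\notin C_{r,\delta}$, hence $\Delta v\notin C_{r,\delta}$, which means one of the defining inequalities fails: either $\Delta\alpha+\Delta\beta\geq r\Delta b$ or $|\Delta\eta_L|\geq\delta\Delta b$ or $|\Delta\eta_R|\geq\delta\Delta b$. In every one of these cases, $|\Delta b|$ is bounded by a constant times one of $|\Delta\alpha|+|\Delta\beta|$, $|\Delta\eta_L|$, $|\Delta\eta_R|$, hence by a constant times $\|\Delta v\|$. But that only gives $|\Delta b|\leq K\|\Delta v\|$, not the factor $|I'|$. To extract the extra $|I'|$ I would instead argue through the $\tilde b$-component: from the matrix form of $D\underline{\mathcal R}_{\underline f_n}$ in Section~5.1, the third row is $(K_1,K_2,\partial\tilde b/\partial b,\partial\tilde b/\partial\eta_L,0)$ with $K_1,K_2,\partial\tilde b/\partial b\asymp 1/|I'|$ (Lemma~\ref{lemma partial derivatives of abb tilde respec to abb}) and $\partial\tilde b/\partial\eta_L\asymp b/|I'|$ (Lemma~\ref{lemma entries of B_f matrix}), so $|\Delta\tilde b|\asymp\frac{1}{|I'|}\bigl(|\Delta\alpha|+|\Delta\beta|+|\Delta b|+ b|\Delta\eta_L|\bigr)$ up to small errors from the first two rows. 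Since $\Delta\tilde v\notin C_{r,\delta}$ and the cone is invariant and expansive with $|\Delta\tilde v|\gtrsim\lambda_0|\Delta b|$ by the computation inside Lemma~\ref{lem:cone2}, one has $|\Delta\tilde b|$ dominated by the other components of $\Delta\tilde v$, so $|\Delta b|$ must be comparable to $|I'|$ times $|\Delta\tilde b|$, which is at most $|I'|\cdot K\|\Delta v\|$ since $D\underline{\mathcal R}_{\underline f}$ acting on the $(\alpha,\beta,b)$-block has all its "large'' growth exactly in the $\tilde b$ direction and is otherwise bounded on the complement of the cone. This is the step I expect to be the main obstacle: pinning down exactly which quantity controls $|\Delta b|$ when $\Delta v$ lies outside the cone, and checking that the only mechanism producing a large image is the $1/|I'|$ in the $\tilde b$-row, so that escaping the cone forces $|\Delta b|$ to be $O(|I'|\|\Delta v\|)$.

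For (ii), the point is that \emph{outside} the cone $C_{r,\delta}$ the operator $D\underline{\mathcal R}_{\underline f}$ is uniformly bounded. Indeed the only entries of $D\underline{\mathcal R}_{\underline f_n}$ that are large are $K_1,K_2,\partial\tilde b/\partial b$ in the $\tilde b$-row (order $1/|I'|$) and $\partial\tilde b/\partial\eta_L$ (order $b/|I'|$); all remaining entries are bounded by Lemmas~\ref{lemma entries of B_f matrix}--\ref{lemma entries of D matrix} and Corollary~\ref{cor:bdd N}. Thus $\|\Delta\tilde v\|\leq K\|\Delta v\| + \frac{K}{|I'|}\bigl(|\Delta\alpha|+|\Delta\beta|+|\Delta b|+b|\Delta\eta_L|\bigr)$; the large term is exactly $\asymp|\Delta\tilde b|$, and by part (i) we have $|\Delta b|\leq K|I'|\,\|\Delta v\|$, while $|\Delta\alpha|+|\Delta\beta|$ and $|\Delta\eta_L|$ are themselves $\leq\|\Delta v\|$ and (when outside the cone) bounded relative to the components that keep $\Delta\tilde v$ out of the cone, so $\frac{1}{|I'|}(\,\cdots)\lesssim\|\Delta v\|$. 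Combining, $\|\Delta\tilde v\|\leq K\|\Delta v\|$. I would organize the write-up by first stating the contrapositive of Lemma~\ref{lem:cone2} to get $\Delta v\notin C_{r,\delta}$, then case-splitting on which cone inequality fails, and in each case applying the $\asymp$ estimates of Section~4 to the explicit matrix of Section~5.1; the constant $K$ produced is uniform once $n$ (equivalently, the renormalization depth) is large, since all the $\asymp$'s there are uniform.
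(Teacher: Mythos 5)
Your proposal follows essentially the same route as the paper's proof. Both arguments hinge on the same three ingredients: the formula $\Delta\tilde b = K_1\Delta\alpha + K_2\Delta\beta + (\partial\tilde b/\partial b)\,\Delta b + (\partial\tilde b/\partial\eta_L)\,\Delta\eta_L$ with coefficients $\asymp 1/|I'|$ from Lemmas~\ref{lemma partial derivatives of abb tilde respec to abb} and~\ref{lemma entries of B_f matrix}; the cone-escape inequality $|\Delta\tilde b|\leq C\|\Delta\tilde x\|$ coming from $\Delta\tilde v\notin C_{r,\delta}$; and the boundedness of the non-$\tilde b$ rows of $D\underline{\mathcal R}_{\underline f}$ giving $\|\Delta\tilde x\|=O(\|\Delta v\|)$. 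Combining these yields both (i) and (ii), exactly as you outline. The one place you deviate is the opening invocation of the contrapositive of Lemma~\ref{lem:cone2} to deduce $\Delta v\notin C_{r,\delta}$: this is true but superfluous, as you yourself notice it does not produce the $|I'|$ factor, and the paper never uses it. The subsequent sidetrack through the expansion estimate $|\Delta\tilde v|\gtrsim\lambda_0|\Delta b|$ is also not well-founded (that bound in Lemma~\ref{lem:cone2} is for vectors \emph{inside} the cone), but you do not actually rely on it. Once you pivot to arguing through the $\tilde b$-row and the cone-escape bound on $|\Delta\tilde b|$, your reasoning matches the paper's, including the slightly informal step that reads the $\asymp 1/|I'|$ estimate on $\partial\tilde b/\partial b$ as giving $|\Delta b|\lesssim |I'|\,|\Delta\tilde b|$ for the full vector.
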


\begin{proof}

For convenience in this proof we express $\underline{f}$ in new coordinates, $\underline{f} = (b,x)$, where $x=(\alpha, \beta, \eta_L, \eta_R )$. We use the same notation for a vector $\Delta v  =(\Delta b, \Delta x)$, where $\Delta x = (\Delta \alpha, \Delta \beta, \Delta \eta_L, \Delta \eta_R)$.
\begin{comment}

Set $\Delta v = \big( \Delta \alpha, \Delta \beta, \Delta b, \Delta \eta_L, \Delta \eta_R\big)$ and $\Delta \tilde{v} = \big( \Delta \tilde{\alpha}, \Delta \tilde{\beta}, \Delta \tilde{b}, \Delta \tilde{\eta}_L, \Delta \tilde{\eta}_R\big)$, but for sake of simplicity we change the position of of the first coordinate with the third one and we consider $\Delta v = \big(\Delta b, \Delta \alpha, \Delta \beta, \Delta \eta_L, \Delta \eta_R\big) = \big( \Delta b, \Delta x \big)$ and  $\Delta \tilde{v} = \big(\Delta \tilde{b}, \Delta \tilde{\alpha}, \Delta \tilde{\beta}, \Delta \tilde{\eta}_L, \Delta \tilde{\eta}_R\big) = \big( \Delta \tilde{b}, \Delta \tilde{x} \big)$. We mean that $\Delta x = \big( \Delta \alpha, \Delta \beta, \Delta \eta_L, \Delta \eta_R \big)$ and $\Delta \tilde{x} = \big( \Delta \tilde{\alpha}, \Delta \tilde{\beta}, \Delta \tilde{\eta}_L, \Delta \tilde{\eta}_R \big)$. 
\end{comment}
Since 
$\Delta \tilde{v} = D \underline{\mathcal{R}}_{\underline{f}} \big( \Delta v \big)$ it is not difficult to check that
\[
\displaystyle \Delta \tilde{b} = K_1 \cdot \Delta \alpha + K_2 \cdot \Delta \beta + \frac{\partial \tilde{b}}{\partial b} \cdot \Delta b + \frac{\partial \tilde{b}}{\partial \eta_L} \cdot \eta_L
+0 \cdot \Delta \eta_R. 
\]
Using Lemmas~\ref{lemma partial derivatives of abb tilde respec to abb} and \ref{lemma entries of B_f matrix} we get

\begin{equation}
\label{Delta b tilde to Delta b}
\displaystyle \frac{|\Delta \tilde{b}|}{|\Delta b|} \asymp \frac{1}{|I'|}.
\end{equation}
From the hypothesis $\Delta \tilde{v} = \big( \Delta \tilde{b}, \Delta \tilde{x} \big) = D \underline{\mathcal{R}}_{\underline{f}} \big( \Delta v \big) \notin C_{r, \delta}$ we have

\begin{equation}
\label{Delta b tilde} 
|\Delta \tilde{b}| \leq C \cdot ||\Delta \tilde{x}||
\end{equation}
for some constant $C>0$. This inequality together with (\ref{Delta b tilde to Delta b}) imply in

\[
|\Delta b| \asymp C \cdot |I'| \cdot || \Delta v||, 
\]
which proves statement (i). For statement (ii) we just observe that except for two entries on third line of matrix 

\[
D\underline{\mathcal{R}}_{\underline f_n} =
\left[\begin{array}{cc} A_{\underline f_n} & B_{\underline f_n}\\ C_{\underline f_n} & D_{\underline f_n}
\end{array} \right]
\]
all the others entries are bounded. Then we obtain

\begin{equation}
\label{x tilde}
|| \Delta \tilde{x} || = O \big( || \Delta v|| \big).
\end{equation}
Since 
\[
|| \Delta \tilde{v} || = |\Delta \tilde{b}| + || \Delta \tilde{x}, || 
\]
from (\ref{Delta b tilde}) we obtain

\[
|| \Delta \tilde{v} || \leq C \cdot || \Delta \tilde{x} || + ||\Delta \tilde{x} ||
\]
and from (\ref{x tilde}) we are done.
\end{proof}

\subsection{Conjugacy classes are $\mathcal C^1$ manifolds}

Let $\underline f\in\underline{\mathcal{D}}$ be an infinitely renormalizable gap mapping, regarded as an
element of the decomposition space. Let $\underline{\mathcal{T}}_{\underline f}\subset \underline{\mathcal{D}}$, be the 
topological conjugacy class of $\underline f$ in
$\underline{\mathcal{D}}$. 

%Let $B={(\alpha, \beta,\eta_L,\eta_R)}$.
Observe that for $M>0$ sufficiently large and 
$\varepsilon>0$ sufficiently small
$$B_0=\{(\alpha, \beta,\eta_L,\eta_R):|\eta_L|,|\eta_R|<M;\alpha,
\beta<\varepsilon\}$$
is an absorbing set for the renormalization operator acting on the 
decomposition space; that is, for every infinitely renormalizable
$\underline f\in\underline{\mathcal{D}},$
there exists $M>0$ 
with the property that for any
$\varepsilon>0,$ there exists
$n_0\in\mathbb N,$ 
so that for any 
for $n\geq n_0$, $\underline{\mathcal{R}}^n\underline f\in B_0.$

To conclude the proof of Theorem~\ref{thm:A}, we make use of the 
graph transform.  
We refer the reader to Section~2
of \cite{MP}, for the proofs of some of the results in this section.
Let $$X_0=\{w\in C(B,[0,1]): \mbox{for all } p,q\in
\mbox{graph}(w), q-p\notin C_{r,\delta}\}.$$

A $\mathcal C^1$ curve $\gamma:[0,1]\to \mathcal D$ is called 
{\em almost horizontal} if the tangent vector
$ T_{\gamma(\xi)}\gamma(\xi)\in C_{r,\delta}$, for all $\xi \in (0,1)$ with
$\gamma(0)=(\alpha,\beta,0,\eta_L,\eta_R)$, and
$\gamma(1)=(\alpha,\beta,1,\eta_L,\eta_R)$.
Notice that for any almost horizontal curve $\gamma$, 
and $w\in X_0$, there is a unique point
$w^\gamma=\gamma\cap\mathrm{graph}(w)$.
For any $p,q\in\gamma$, we set 
$\ell_{\gamma}(p,q)$ to be the length of the shortest 
curve in $\gamma$ connecting $p$ and $q$.

For $w_1,w_2\in X_0$, let 
$$d_0(w_1,w_2)=\sup_{\gamma}\ell_{\gamma}(w_1^\gamma,w_2^\gamma).$$
It is easy to see that $d_0$ is a complete metric on $X_0$.
Let $w\in X_0$, $\psi\in B_0$ and let
$\gamma_{\psi}$ be the horizontal line at $\psi$.
Then
there exists a subcurve of $\gamma_\psi$ corresponding to a renormalization window that
is mapped to an almost horizontal curve $\tilde \gamma$ under
renormalization.

We define the graph transform by
$$Tw(\psi)=\underline{\mathcal{R}}^{-1}((\underline{\mathcal{R}}w)^{\tilde\gamma}).$$
By \cite{AlvesColli2}, we have that
if $\underline f_b=(\alpha, \beta, b,\eta_L,\eta_R)$ and 
$\underline f_{b'}=(\alpha, \beta, b',\eta_L,\eta_R),$ 
are two, $n$ times renormalizable dissipative gap mappings
with the same combinatorics, then for every $\xi\in[b,b']$, we have that
$(\alpha, \beta, \xi,\eta_L,\eta_R)$ is $n$-times renormalizable with the same
combinatorics.
It follows from the invariance of the cone field that
$Tw\in X_0$, and by Lemma~\ref{lem:cone2}
we have that $T$ is a contraction.
From these considerations, we have that 
$T$ has a fixed point $w^*$ and that 
the graph of $w^*$ is contained in 
$\{(\alpha, \beta,b,\eta_L,\eta_R)\in\mathcal D:(\alpha,\beta,\eta_L,\eta_R)\in B_0\}$.

\begin{prop}
\label{prop:deep manifold}
We have that $\underline{\mathcal{T}}_{\underline f}\cap B_0$ 
is a $\mathcal C^1$ manifold.
\end{prop}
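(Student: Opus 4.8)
The plan is to exhibit $\underline{\mathcal{T}}_{\underline f}\cap B_0$ as the graph of the fixed point $w^*$ of the graph transform $T$ constructed above, and then to upgrade this graph from merely Lipschitz (which follows from $w^*\in X_0$) to $\mathcal C^1$. First I would check the set-theoretic identity: by the results of \cite{AlvesColli2} quoted above, renormalizability with a given combinatorics is preserved along the vertical segments $\xi\mapsto(\alpha,\beta,\xi,\eta_L,\eta_R)$, so for each choice of $(\alpha,\beta,\eta_L,\eta_R)\in B_0$ there is at most one value of $b$ for which the mapping has the same infinite combinatorics as $\underline f$; conversely the invariant cone field argument shows every point of $\mathrm{graph}(w^*)$ is infinitely renormalizable with that combinatorics, so $\mathrm{graph}(w^*)=\underline{\mathcal{T}}_{\underline f}\cap B_0$. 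This gives the manifold as a Lipschitz graph of codimension one.

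Next I would establish $\mathcal C^1$ regularity of $w^*$. The natural route is to run the graph transform on the tangent level: the invariant cone field $C_{r,\delta}$ from Lemma~\ref{lem:cone2} is forward-invariant and expanded by $D\underline{\mathcal{R}}$, and a complementary cone (the ``almost vertical'' directions, i.e.\ those $\Delta v$ with $D\underline{\mathcal{R}}_{\underline f}\Delta v\notin C_{r,\delta}$) is backward-invariant and contracted — this is exactly the content of Lemma~\ref{Technical Lemma}(ii), which bounds $\|\Delta\tilde v\|\leq K\|\Delta v\|$ on that complementary cone while $D\underline{\mathcal{R}}$ expands vectors in $C_{r,\delta}$ by an arbitrarily large factor $\lambda_0$. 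Thus at each point of $\mathrm{graph}(w^*)$ the tangent spaces to the candidate manifold form an invariant sub-bundle, obtained as the graph of a linear map that is itself the fixed point of a fibre contraction over the base dynamics $\underline{\mathcal{R}}$. By the standard $\mathcal C^1$-section / fibre-contraction theorem (as used in \cite{MP}, Section~2, to which the excerpt already defers), the Lipschitz graph $w^*$ is in fact $\mathcal C^1$, with tangent distribution equal to this invariant bundle. One must also pull the statement back from deep renormalization levels to $B_0$ itself: since $B_0$ is absorbing and $\underline{\mathcal{R}}$ is jump-out differentiable with the derivative estimates of Section~\ref{sec:derivative}, finitely many applications of $\underline{\mathcal{R}}^{-1}$ (which is smooth on the relevant renormalization windows) transport the $\mathcal C^1$ graph structure from $\underline{\mathcal{R}}^n\mathrm{graph}(w^*)$ back to $\mathrm{graph}(w^*)\cap B_0$.

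The main obstacle I anticipate is the regularity bootstrap rather than the set-theoretic identification: because the composition operator underlying $\underline{\mathcal{R}}$ is not differentiable in the usual sense (only jump-out differentiable, so $D\underline{\mathcal{R}}_{\underline f}:\mathcal C^2\to\mathcal C^2$ and depends merely continuously on $\underline f\in\mathcal C^3$), one cannot invoke an off-the-shelf $\mathcal C^1$ stable manifold theorem directly. The care needed is to verify that the continuity of $\underline f\mapsto D\underline{\mathcal{R}}_{\underline f}$ in the operator norm, together with the uniform (in $n$) hyperbolicity of the splitting from the Proposition above and the uniform cone estimates of Lemmas~\ref{lem:cone1}–\ref{lem:cone2} and \ref{Technical Lemma}, is exactly enough to make the fibre-contraction argument go through — i.e.\ that the invariant tangent bundle varies continuously and the graph is $\mathcal C^1$. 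This is precisely the place where the estimates of Section~\ref{sec:derivative} do their work, and where I would follow the scheme of \cite{MP} closely. Finally, the codimension-one claim is immediate: the complementary (contracted) cone, and hence the tangent space to $\underline{\mathcal{T}}_{\underline f}$, has codimension one — it misses exactly the one expanding direction detected by $\partial\tilde b/\partial b$ being large.
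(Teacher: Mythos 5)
Your plan is the right one and matches the paper's strategy at the structural level: identify $\underline{\mathcal{T}}_{\underline f}\cap B_0$ with $\mathrm{graph}(w^*)$, then run a graph transform at the tangent level (on plane fields) to produce an invariant codimension-one plane field, using the cone estimates and Lemma~\ref{Technical Lemma}. But there is a genuine gap at exactly the point you flag and then step over. You write that ``the Lipschitz graph $w^*$ is in fact $\mathcal C^1$, with tangent distribution equal to this invariant bundle,'' appealing to a ``standard $\mathcal C^1$-section / fibre-contraction theorem''--- while simultaneously acknowledging that the standard theorem does not apply here because $\underline{\mathcal{R}}$ is only jump-out differentiable. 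Producing an invariant continuous plane field $V^*$ as the fixed point of a contraction $Q$ on admissible plane fields is the easy half; it does \emph{not} by itself imply that $V^*(\underline f)$ is the tangent space to $\mathrm{graph}(w^*)$ at $\underline f$, nor that the Lipschitz graph is differentiable at all. That implication is exactly the missing verification, and it is the substantive content of the paper's proof: one introduces the defect
\[
A=\sup_{p}\ \limsup_{\gamma\to p}\ \frac{|\Delta b-\Delta b'|}{|\Delta v|},
\]
comparing, along almost horizontal curves $\gamma$ shrinking to $p\in\mathrm{graph}(w^*)$, the $b$-coordinate of the intersection with $\mathrm{graph}(w^*)$ against the $b$-coordinate of the intersection with the candidate plane $V^*_{\underline f}$. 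One first shows $A\leq 1$ and then, by pushing forward under $\underline{\mathcal R}$, decomposing the discrepancy into the pieces $\Delta\epsilon$, $\Delta h$, $\Delta h_1$, $\Delta h_2$, and using Lemma~\ref{Technical Lemma}(i)--(ii) together with admissibility of $V^*_{\underline{\mathcal{R}}(\underline f)}$, one obtains
\[
\limsup_{\gamma\to p}\ \frac{|\Delta b-\Delta b'|}{|\Delta v|}\ \leq\ O(|I'|)\,A,
\]
whence $A=0$ because $|I'|\to 0$ at deep renormalization levels. Without this self-improving estimate (or an equivalent one) you have not shown that the Lipschitz graph is differentiable, let alone $\mathcal C^1$ with tangent bundle $V^*$; invoking the fibre-contraction theorem by name does not discharge it. Everything else in your outline --- the identification of the graph with the conjugacy class, the contraction on plane fields, the codimension-one count from the single expanding direction detected by $\partial\tilde b/\partial b$, and the pullback from deep levels to $B_0$ --- is correct and is what the paper does.
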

To prove this proposition, we use the graph transform acting to plane fields to show that $\underline{\mathcal{T}}_{\underline f}\cap B_0$ has a continuous field of tangent planes.

A {\em plane} is a codimension $1$ subspace of $\mathbb{R} \times B_0$ which is the graph of a functional $b^* \in \mathrm{Dual}(B_0)$. By identifying the plane with the corresponding functional $b^*$ we have that $\mathrm{Dual}(B_0)$ is the space of planes and carries a corresponding complete distance $d^*_{B_0}$.

Let us fix a constant $\chi>0$ to be chosen later.
\begin{definition}
\label{Def plane admissible}
Let $p=\underline{f} \in \mathrm{graph}(w^*)$. A plane $V_p$ is admissible for $p$ if it has the following properties:

\begin{enumerate}
    \item[(1)] if $(\Delta \alpha, \Delta \beta, \Delta b, \Delta \eta_L, \Delta \eta_R) \in V_p$ then $|\Delta b| \leq \chi b ||(\Delta \alpha, \Delta \beta, \Delta \eta_L, \Delta \eta_R )||$,
    
    \item[(2)] $V_p$ depends continuously on $p$ with respect to $d^*_{B_0}$.
\end{enumerate}
The set of admissible planes for $p$ is denoted by $\mathrm{Dual}_p(B_0)$.
\end{definition}

We let $X_1$ denote the space of all admissible plane fields. For clarity of exposition, 
we will express $\underline{f}$ in new coordinates: $\underline{f} = (b,x)$, where $x=(\alpha, \beta, \eta_L, \eta_R )$. We use the same notation for a vector $\Delta v  =(\Delta b, \Delta x)$, where $\Delta x = (\Delta \alpha, \Delta \beta, \Delta \eta_L, \Delta \eta_R)$, and although $V_{\underline{f}}^*$ is a subspace of $\mathbb{R} \times B_0$, for the next result we abuse notation and denoting the set $\{p+v | v \in  V_{\underline{f}}^* \}$ also by $V_{\underline{f}}^*$. 

Let $p=(b,x)\in w^*$ and define a distance on
$\mathrm{Dual}_p(B_0)$ as follows. For any two planes, 
$V_p,V'_p\in\mathrm{Dual}_p(B_0),$ let 
$\mathcal S$ denote the set of all straight lines
$\gamma$ with direction in 
$C_{r,\delta}.$ Provided that $\varepsilon$ is small enough, 
$\gamma$ intersects $V_p$ at exactly one point, and 
likewise for $V'_p$.
Let $\Delta q_{\gamma}=(\Delta b_{\gamma},\Delta x_{\gamma})=\gamma\cap V_p$, and $\Delta q'_{\gamma}=(\Delta b'_{\gamma},\Delta x'_{\gamma})=\gamma\cap V_p'.$
We define
$$d_{1,p}(V_p,V'_p)=
\sup_{\gamma\in\mathcal S}\frac{|\Delta b_{\gamma}-\Delta b'_{\gamma}|}{\min\{|\Delta q_{\gamma}|,|\Delta q'_{\gamma}|\}}.$$
When it will not cause confusion we will omit $\gamma$ from the notation. 
It is not hard to see that $d_{1,p}$ is a complete metric.
For $V,V'\in X_1,$ we define
$$d_1(V,V')=\sup_{p\in w^*}d_{1,p}(V_p,V'_p).$$
On an absorbing set for renormalization operator, we have that 
$d_1$ is metric and $(X_1,d_1)$ is a complete metric space.
This follows just as in
\cite[Lemmas 2.29 and 2.30]{MP}.

\begin{comment}
Let $\gamma\subset C_{r/2,\delta/2}(\underline f)$ (the cone field at $\underline f$)
be a straight line, and let
$M_{\gamma}(V_1(\underline f),V_2(\underline f))$ denote the line segment in $\gamma$
that connects $V_1(\underline f)$ and $V_2(\underline f)$. 
We let $T_{\gamma}$ be the longest line segment of
$\gamma$ which does not intersect $C_{r,\delta}(\underline f)$. 
We define a metric $d_1$ on the space of plane fields by
$$d_1(V_1,V_2)=\sup_{\underline f,\gamma}\frac{|M_{\gamma}(V_1(\underline f), V_2(\underline f))|}{|T_{\gamma}|}.$$
One easily sees that $d_1$ is a complete metric on the space of plane fields.

\medskip
\end{comment}

We define the graph transform $Q:X_1\to X_1$ by
$$Q V_{\underline f}=D\underline{\mathcal{R}}_{\underline{\mathcal{R}}\underline f}^{-1}(V_{\underline{\mathcal{R}}\underline f}).$$

\begin{lemma}
Admissible plane fields are invariant under $Q$. Moreover, $Q$ is contraction on the space $(X_1,d_1).$
\end{lemma}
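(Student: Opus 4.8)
The plan is to verify the two assertions separately, using the cone-field estimates from Lemma~\ref{lem:cone2} as the main input. For \emph{invariance}, I would take an admissible plane $V_{\underline f}$ at a point $p=\underline f\in\mathrm{graph}(w^*)$, and show that $QV_{\underline f}=D\underline{\mathcal R}_{\underline{\mathcal R}\underline f}^{-1}(V_{\underline{\mathcal R}\underline f})$ again satisfies conditions (1) and (2) of Definition~\ref{Def plane admissible}. Condition (2), continuous dependence on $p$, follows from the continuous dependence of $D\underline{\mathcal R}_{\underline f}$ on $\underline f$ (the jump-out differentiability noted at the start of Section~\ref{sec:derivative}) together with continuity of $w^*$. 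For condition (1), I would argue as follows: suppose $\Delta v=(\Delta b,\Delta x)\in QV_{\underline f}$, so that $\Delta\tilde v=D\underline{\mathcal R}_{\underline f}(\Delta v)\in V_{\underline{\mathcal R}\underline f}$. If $\Delta v$ lies in the cone $C_{r,\delta}$, then by the expansiveness half of Lemma~\ref{lem:cone2}, $\Delta\tilde v$ also lies in $C_{r/2,\delta/2}\subset C_{r,\delta}$; but the admissibility bound (1) on $V_{\underline{\mathcal R}\underline f}$ forces $|\Delta\tilde b|\leq\chi\tilde b\,\|\Delta\tilde x\|$, and since $\tilde b\to 0$ along deep renormalizations this is incompatible with being in $C_{r,\delta}$ unless $\Delta\tilde v=0$. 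Hence $\Delta v\notin C_{r,\delta}$, so by Lemma~\ref{Technical Lemma}(i) we get $|\Delta b|\leq K|I'|\,\|\Delta v\|$; combining this with the relation $\tilde b\asymp \text{(const)}/|I'|$ from Lemma~\ref{lemma partial derivatives of abb tilde respec to abb} (or rather the companion estimate $|I'|\asymp \text{const}\cdot b$ coming from the gap geometry used in the proof of the hyperbolicity proposition) yields $|\Delta b|\leq \chi b\,\|\Delta x\|$ for the right choice of $\chi$. This is where $\chi$ gets pinned down.

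For \emph{contraction}, I would fix $p=(b,x)\in w^*$, set $p'=\underline{\mathcal R}\underline f$ with coordinate $b'$, and compare $QV_p$ with $QV'_p$ using the metric $d_{1,p}$. Take a straight line $\gamma\in\mathcal S$ with direction in $C_{r,\delta}$; since $D\underline{\mathcal R}_{\underline f}$ maps $C_{r,\delta}$ into $C_{r/2,\delta/2}$, the image $D\underline{\mathcal R}_{\underline f}(\gamma)$ is (contained in) a line with direction in $C_{r/2,\delta/2}$, hence an admissible test line at $p'$. Writing $\Delta q=\gamma\cap QV_p$, $\Delta q'=\gamma\cap QV'_p$ and their images $\Delta\tilde q=D\underline{\mathcal R}_{\underline f}(\Delta q)\in V_{p'}$, $\Delta\tilde q'=D\underline{\mathcal R}_{\underline f}(\Delta q')\in V'_{p'}$, I would estimate
$$\frac{|\Delta b-\Delta b'|}{\min\{|\Delta q|,|\Delta q'|\}}
\leq \frac{|\Delta b-\Delta b'|}{|\Delta\tilde b-\Delta\tilde b'|}\cdot
\frac{|\Delta\tilde q|}{\min\{|\Delta q|,|\Delta q'|\}}\cdot
\frac{|\Delta\tilde b-\Delta\tilde b'|}{\min\{|\Delta\tilde q|,|\Delta\tilde q'|\}}\cdot
\frac{\min\{|\Delta\tilde q|,|\Delta\tilde q'|\}}{|\Delta\tilde q|}.$$
The middle factor is $\leq d_{1,p'}(V_{p'},V'_{p'})$. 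The remaining factors must be shown to contribute a bound strictly less than $1$: the point is that the $\Delta b$–component is expanded by a factor $\asymp 1/|I'|$ (Lemma~\ref{lemma partial derivatives of abb tilde respec to abb}), much larger than the bounded expansion $\Delta\tilde q=O(\|\Delta q\|)$ of the transverse coordinates $\Delta x$ (the bounded entries of $D\underline{\mathcal R}$), so $|\Delta b-\Delta b'|/|\Delta\tilde b-\Delta\tilde b'|\asymp |I'|$ while $|\Delta\tilde q|/\min\{|\Delta q|,|\Delta q'|\}$ is bounded; since $|I'|\to 0$ in deep renormalization, the product is $<1/2$, say, for $n$ large. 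Taking the supremum over $\gamma$ and over $p\in w^*$ gives $d_1(QV,QV')\leq \tfrac12 d_1(V,V')$.

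I expect the \emph{main obstacle} to be the bookkeeping in the contraction estimate: one has to be careful that the test line $\gamma$ at $p$ maps to a genuine admissible test line at $p'$ (so that the inductive quantity $d_{1,p'}$ genuinely controls the middle factor), and that the "expansion of $\Delta b$ versus bounded behaviour of $\Delta x$" dichotomy is applied uniformly — this really uses both that $\Delta v\notin C_{r,\delta}$ (forcing $|\Delta b|$ to be comparable to $|I'|\|\Delta v\|$ via Lemma~\ref{Technical Lemma}) and that all entries of $D\underline{\mathcal R}$ except the two large ones in the $\tilde b$-row are bounded. Once these are in place, the computation is routine. Secondary care is needed to choose the constant $\chi$ in Definition~\ref{Def plane admissible} compatibly with the constants produced by Lemma~\ref{lem:cone2} and Lemma~\ref{Technical Lemma}, and to confirm, using the same argument as in \cite[Lemmas 2.29, 2.30]{MP}, that $(X_1,d_1)$ is complete so that the contraction $Q$ has a fixed point, which is the candidate tangent-plane field for $\underline{\mathcal T}_{\underline f}\cap B_0$.
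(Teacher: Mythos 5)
Your proposal follows essentially the same route as the paper's proof: invariance comes from the fact that a vector in $QV_p$ maps into an admissible plane and hence out of the cone, so Lemma~\ref{Technical Lemma}(i) applies and gives $|\Delta b|\lesssim |I'|\,\|\Delta x\|\lesssim b\,\|\Delta x\|$; and the contraction estimate comes from exactly the same comparison between the $\asymp 1/|I'|$ expansion of the $b$-coordinate and the bounded behaviour of the remaining coordinates, yielding the factor $C|I'|$ in front of $d_{1,\underline{\mathcal R}(p)}$.

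Two small remarks. First, you write ``Hence $\Delta v\notin C_{r,\delta}$, so by Lemma~\ref{Technical Lemma}(i)\ldots'', but the hypothesis of that lemma is that the \emph{image} $\Delta\tilde v=D\underline{\mathcal R}_{\underline f}(\Delta v)$ lies outside the cone. What your argument actually establishes (correctly) is that $\Delta\tilde v$ is in an admissible plane and so cannot lie in $C_{r,\delta}$ when $\tilde b$ is small, which is the right hypothesis; the intermediate step about $\Delta v$ is unnecessary. Second, for the continuity part of admissibility, you appeal directly to continuity of $D\underline{\mathcal R}_{\underline f}$ in $\underline f$, while the paper gives a cone-transversality contradiction argument; your shortcut is plausible but glosses over the uniform transversality of $D\underline{\mathcal R}_p$ to $V_{\underline{\mathcal R}(p)}$ that makes the preimage operation continuous, which is precisely what the paper's cone argument supplies. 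Neither point changes the overall structure, which matches the paper's.
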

\begin{proof}
Let us set $p=\underline f.$
To show invariance, assume that $V_p$ is an admissible plane field, and take
$(\Delta b,\Delta x)\in QV_p.$ Set
$(\Delta \tilde b,\Delta \tilde x)=
D\underline{\mathcal{R}}_p(\Delta b,\Delta x)\in V_{\mathcal R(p)}.$
By Lemma~\ref{Technical Lemma},
we have that
$\|\Delta b\|\leq K|I'|\|\Delta v\|,$ but now, since $V_{\underline{\mathcal{R}}(p)}$ is
an admissible plane field, we have that
 $K|I'|\|\Delta v\|\leq K_1|I'|\|\Delta x\|,$ where $K_1=K_1(K,r,\delta).$ 
 Furthermore, if $QV_p$ is not continuous in $p$, then there exists a sequence $p_n \rightarrow p$ such that $QV_{p_n}$ does not converge to $QV_p$. But now, since $QV_{p_n}$ and $QV_p$ are all codimension-one subspaces there exists $\Delta v \in QV_{p}$ such that $\Delta v$ is transverse to $QV_{p_n}$ for all $n$ sufficiently large. Since $V_{\underline{\mathcal{R}}(p)}$ is admissible, $D \underline{\mathcal R} \Delta v \in V_{\underline{\mathcal{R}}(p)}$. On the other hand, we can express $\Delta v=\Delta z'\oplus\Delta z$ with 
 $\Delta z\in C_{r,\delta}.$ 
 By the invariance of the cone field,
 we have that $\Delta\tilde v=\Delta y'\oplus\Delta y$ with $\Delta y\in C_{r,\delta}$. But now, $\Delta\tilde v$
 is transverse to $V_{\underline{\mathcal{R}}(p_n)}$
for all $n$ sufficiently big, which contradicts the admissibility of $V_p.$ 
 Hence we have that $QV_p$ depends continuously on $p$.

To see that $Q$ is a contraction, take two admissible plane fields $V,V'$, and line $\gamma\in\mathcal S.$
Define $\Delta q=(\Delta b,\Delta x)\in V$ and
$\Delta q'=(\Delta b',\Delta x')\in V$ be as in the definition of $d_{1,p}.$ 
Let $\Delta \tilde q=(\Delta \tilde b,\Delta \tilde x)=D\underline{\mathcal{R}}_{p}\Delta q$, and likewise for the objects marked with a prime. 
Observe that by 
Lemma~\ref{Technical Lemma}, we have that
$\|\Delta q\|\geq \frac{1}{C_1}\tilde\|\Delta \tilde q\|,$
and that
$|\Delta b|\leq C_2|I'||\Delta\tilde b|$.
So
$$\frac{|\Delta b-\Delta b'|}{\min\{\|\Delta q\|,\|\Delta q'\|\}}\leq C|I'|\frac{|\Delta\tilde b-\Delta\tilde b'|}{\min\{\|\Delta\tilde v\|,\|\Delta\tilde v\|\}}\leq \frac{1}{2}d_{1,\underline{\mathcal{R}}(p)}(V_{\underline{\mathcal{R}}({p)}},
V_{\underline{\mathcal{R}}(p)}').$$
Thus,
$$d_{1}(QV,QV')\leq\frac{1}{2}
d_1(V,V').$$
\end{proof}

Thus we have that there is an admissible plane field $V^*(\underline f),$ which is invariant plane field under $Q$.

\medskip

Now we conclude the proof of the proposition. 
We will show that for each $\underline f\in\mathrm{graph}(w^*),$
$V^*(\underline f)=T_{\underline f}(\mathrm{graph}(w^*)).$ 

Let $p \in \mathrm{graph}(w^*)$ and take an almost horizontal curve $\gamma$ close enough to $p$ such that $\gamma \cap \mathrm{graph}(w^*) = q =\{ p+ \Delta q = p+ (\Delta \alpha, \Delta \beta, \Delta b, \Delta \eta_L, \Delta \eta_R) \}$ and $\gamma \cap V_{\underline{f}}^* = q'= \{ p+ \Delta q' = p+ (\Delta \alpha', \Delta \beta', \Delta b', \Delta \eta_L', \Delta \eta_R') \}$. We define

\[
A = \sup_{p} \limsup_{\gamma \rightarrow p} \frac{|\Delta b - \Delta b'|}{|\Delta v|}. 
\]
A straightforward calculation shows that at deep renormalization levels we have that $A \leq 1$,
{\em c.f.} \cite[Lemma 2.34]{MP}.

\medskip

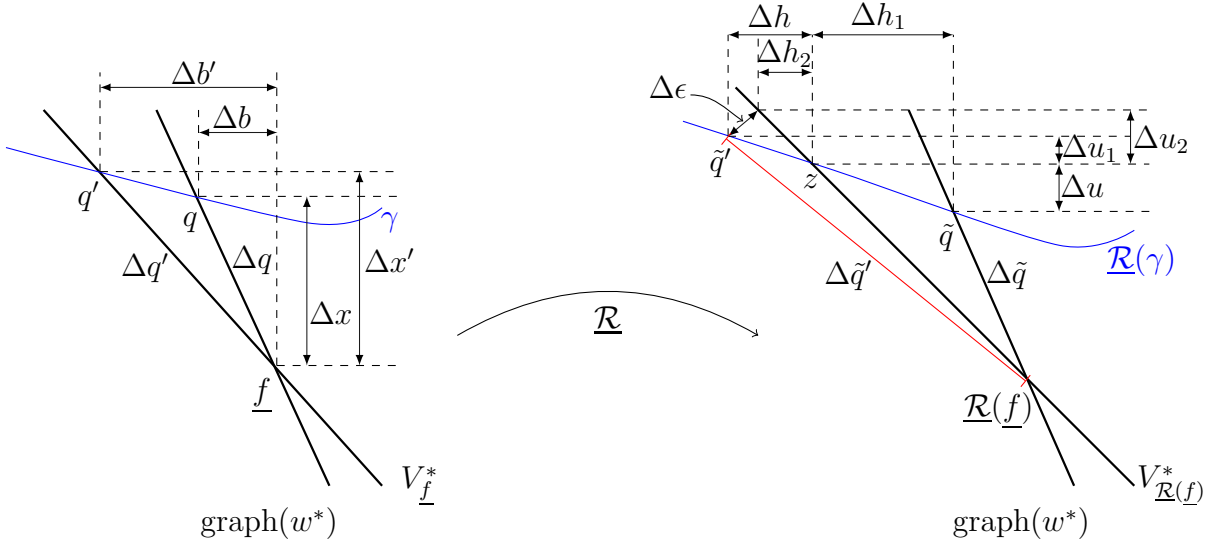
\begin{figure}
\label{fig proposition C1 manifold}
	\begin{center}

\begin{tikzpicture}[baseline]

%Drawing at f
%2 crossing lines at f
\path[draw, line width=0.3mm,top color=white!40,bottom
color=white!40] (4.3,0) -- (2,5);
\path[draw, line width=0.3mm,top color=white!40,bottom
color=white!40] (0.5,5) -- (5,0);
%gamma curve
\draw [blue] plot [smooth] coordinates {(5,3.7) (4,3.5) (0,4.5)};

%horizontas dashed
\draw [dashed] (3.6,1.6) -- (5.2,1.6);
\draw [dashed] (2.56,3.85) -- (5.2,3.85);
\draw [dashed] (1.25,4.18) -- (5.2,4.18);

%verticals dashed
\draw [dashed] (3.6,1.6) -- (3.6,5.5);
\draw [dashed] (2.56,3.85) -- (2.56,5);
\draw [dashed] (1.25,4.18) -- (1.25,5.5);

%vertical lines without arrows
%\path[draw, line width=0.3mm,top color=white!40,bottom
%color=white!40] (4,1.6) -- (4,3.85);

%\path[draw, line width=0.3mm,top color=white!40,bottom
%color=white!40] (4.7,1.6) -- (4.7,4.18);

%vertical lines with arrows
\draw[draw=black!90,latex-latex] (4,3.75) +(0,0.1cm) -- (4,1.7) -- +(0,-0.1cm);

\draw[draw=black!90,latex-latex] (4.7,4.08) +(0,0.1cm) -- (4.7,1.7) -- +(0,-0.1cm);

%horizontal lines without arrows
%\path[draw, line width=0.3mm,top color=white!40,bottom
%color=white!40] (2.56,4.7) -- (3.61,4.7);

%\path[draw, line width=0.3mm,top color=white!40,bottom
%color=white!40] (1.25,5.3) -- (3.61,5.3);

%horizontal lines with arrows
\draw[draw=black!90,latex-latex] (2.66,4.7) +(-0.1cm,0) -- (3.51,4.7) -- +(0.1cm,0);

\draw[draw=black!90,latex-latex] (1.35,5.3) +(-0.1cm,0) -- (3.51,5.3) -- +(0.1cm,0);

%points and etc
\draw[black] (3.5,-0.5) node {$\mathrm{graph}(w^*)$};
\draw[black] (5.5,0) node {$V_{\underline{f}}^*$};
\draw[black] (3.4,1.2) node {$\underline{f}$};
\draw[black] (2.45,3.5) node {$q$};
\draw[black] (1.1,3.85) node {$q'$};
\draw[black] (1.85,2.9) node {$\Delta q'$};
\draw[black] (3.27,3) node {$\Delta q$};
\draw[black] (4.32,2.3) node {$\Delta x$};
\draw[black] (5.1,3) node {$\Delta x'$};
\draw[black] (3,4.9) node {$\Delta b$};
\draw[black] (2.5,5.5) node {$\Delta b'$};
\draw[blue] (5.1,3.5) node {$\gamma$};

%Drawing at R(f)
%2 crossing lines at Rf
\path[draw, line width=0.3mm,top color=white!40,bottom
color=white!40] (9.7,5.3) -- (15,0);
\path[draw, line width=0.3mm,top color=white!40,bottom
color=white!40] (12,5) -- (14.2,0);
%R(gamma) curve
\draw [blue] plot [smooth] coordinates {(15,3.4) (14,3.2) (11,4.2)(9,4.85)};

%horizontas dashed
\draw [dashed] (12.565,3.65) -- (15.2,3.65);
\draw [dashed] (10.7,4.28) -- (15.2,4.28);
\draw [dashed] (10,5) -- (15.2,5);
\draw [dashed] (9.6,4.65) -- (14.95,4.65);

%verticals dashed
\draw [dashed] (9.6,4.65) -- (9.6,6);
\draw [dashed] (10,5) -- (10,6);
\draw [dashed] (12.595,3.65) -- (12.595,6);
\draw [dashed] (10.72,4.28) -- (10.72,6);

%vertical lines without arrows
%\path[draw, line width=0.3mm,top color=white!40,bottom
%color=white!40] (14,3.65) -- (14,4.28);

%\path[draw, line width=0.3mm,top color=white!40,bottom
%color=white!40] (14,4.28) -- (14,4.65);

%\path[draw, line width=0.3mm,top color=white!40,bottom
%color=white!40] (14.95,4.28) -- (14.95,5);

%vertical lines with arrows
\draw[draw=black!90,latex-latex] (14,3.75) +(0,-0.1cm) -- (14,4.18) -- +(0,0.1cm);

\draw[draw=black!90,latex-latex] (14,4.38) +(0,-0.1cm) -- (14,4.55) -- +(0,0.1cm);

\draw[draw=black!90,latex-latex] (14.95,4.38) +(0,-0.1cm) -- (14.95,4.9) -- +(0,0.1cm);

%horizontal lines without arrows
%\path[draw, line width=0.3mm,top color=white!40,bottom
%color=white!40] (9.6,6) -- (10.72,6);

%\path[draw, line width=0.3mm,top color=white!40,bottom
%color=white!40] (10.72,6) -- (12.595,6);

%\path[draw, line width=0.3mm,top color=white!40,bottom
%color=white!40] (10,5.5) -- (10.72,5.5);

%\path[draw, line width=0.3mm,top color=white!40,bottom
%color=white!40] (9.6,4.65) -- (10,5);

%\path[draw, line width=0.3mm,top color=white!40,bottom
%color=white!40] (9.6,4.65) -- (13.6,1.4);

%horizontal lines with arrows
\draw[draw=black!90,latex-latex] (9.7,6) +(-0.1cm,0) -- (10.62,6) -- +(0.1cm,0);

\draw[draw=black!90,latex-latex] (10.82,6) +(-0.1cm,0) -- (12.495,6) -- +(0.1cm,0);

\draw[draw=black!90,latex-latex] (10.1,5.5) +(-0.1cm,0) -- (10.62,5.5) -- +(0.1cm,0);

\draw[draw=black!90,latex-latex] (9.7,4.75) +(-0.1cm,-0.1cm) -- (9.9,4.9) -- +(0.1cm,0.1cm);

%\draw[|-|] (9.6,4.65) -- (13.6,1.4);

\draw[{|[right]}-{|[left]}, red] (9.57,4.62) to (13.56,1.39);

%points and etc
\draw[black] (13.5,-0.5) node {$\mathrm{graph}(w^*)$};
\draw[black] (15.5,0) node {$V_{\underline{\mathcal{R}}(\underline{f})}^*$};
\draw[black] (13.2,1) node {$\underline{\mathcal{R}}(\underline{f})$};
\draw[black] (12.5,3.3) node {$\tilde{q}$};
\draw[black] (9.5,4.3) node {$\tilde{q}'$};
\draw[black] (10.7,4.05) node {$z$};
\draw[black] (11.2,2.8) node {$\Delta \tilde{q}'$};
\draw[black] (13.3,2.8) node {$\Delta \tilde{q}$};
\draw[black] (14.31,3.99) node {$\Delta u$};
\draw[black] (14.41,4.48) node {$\Delta u_1$};
\draw[black] (15.36,4.6) node {$\Delta u_2$};
\draw[black] (10.15,6.25) node {$\Delta h$};
\draw[black] (11.6,6.25) node {$\Delta h_1$};
\draw[black] (10.35,5.75) node {$\Delta h_2$};
\draw[black] (8.8,5.3) node {$\Delta \epsilon $};
\draw[blue] (15.1,3) node {$\underline{\mathcal R}(\gamma)$};

\draw [<-,black] (9.8,4.85) to [out=145,in=0] (9.1,5.2);

\draw[black] (8,2.2) node {$\underline{\mathcal R}$};
\draw [->,black] (6,2) to [out=30,in=150] (10,2);

\end{tikzpicture}

\caption{Notation for the proof of Proposition~\ref{prop:deep manifold}.}
\end{center}
\end{figure}

\noindent\textit{Proof
of Proposition~\ref{prop:deep manifold}}.
We show that at a 
deep level of renormalization, each point $\underline{f} \in \mathrm{graph(w^*)}$ has a tangent plane $T_{\underline{f}}w^* = V_{\underline{f}}^*$.
To get this result it is enough to show that $A=0$.  We use the notation from the definition of $A$ and we introduce the following ones. 

\begin{displaymath}
\begin{array}{rcl}
     \underline{\mathcal{R}}(\underline{f})& = & (\tilde{b}, \tilde{x}), \\
     \underline{\mathcal{R}} (\gamma) \cap \mathrm{graph}(w^*) & = & \tilde{q} = \underline{\mathcal{R}}(q) = \underline{\mathcal{R}}(\underline{f}) + \Delta \tilde{q} = \underline{\mathcal{R}}(\underline{f}) + \big( \Delta \tilde{b}, \Delta \tilde{x} \big), \\
     \underline{\mathcal{R}}(\gamma) \cap V_{\underline{\mathcal{R}}(\underline{f})}^{*} & = & z = \underline{\mathcal{R}}(\underline{f})+ \Delta z, \\
    \underline{\mathcal{R}}(q') & = & \tilde{q}'= \underline{\mathcal{R}}(\underline{f})+ \Delta \tilde{q}' = \underline{\mathcal{R}}(\underline{f})+ \big( \Delta \tilde{b}', \Delta \tilde{x}'\big), \\
     z-\tilde{q} & = & \big( \Delta h_1, \Delta u \big), \\
     \tilde{q}' - z & = & \big( \Delta h, \Delta u_1 \big), \\
     \Delta \tilde{q}' & = & D \underline{\mathcal{R}}_{\underline{f}} \big( \Delta q'\big) + \Delta \epsilon,   \\
    D \underline{\mathcal{R}}_{\underline{f}} \big( \Delta q'\big) - \Delta z & = &  \big( \Delta h_2, \Delta u_2 \big).
\end{array}
\end{displaymath}
For almost horizontal curves $\gamma$ such that $\gamma \cap \mathrm{graph}(w^*)$ is close enough to $p$ we get

\begin{equation}
\label{epsilon much smaller than q'}
|\Delta \epsilon |  =  o \big( |\Delta q'| \big), 
\end{equation}
and
\begin{equation}
\label{epsilon greater than u_2-u_1}
|| \Delta u_2 - \Delta u_1||  \leq  |\Delta \epsilon|.   
\end{equation}
Since $\underline{\mathcal{R}}$ has strong expansion on $b$ direction, and using the differentiability of $\underline{\mathcal{R}}$, we get
\begin{equation}
\label{strong expansion on b direction}
|\Delta h_1|+|\Delta h| \geq  \displaystyle \frac{1}{|I'|} \cdot |\Delta b - \Delta b'|.    
\end{equation}
As $(\Delta h_2, \Delta u_2) \in V_{\underline{f}}^*$ and $V_{\underline{f}}^*$ is an admissible plane we get

\begin{equation}
\label{vector in V_R}
|\Delta h_2| \leq 2 \chi \tilde{b} ||\Delta u_2 ||.
\end{equation}
Since $q'-q = (\Delta b'- \Delta b, \Delta x'- \Delta x)$ is a tangent vector to the curve $\gamma$ it is inside the cone $C_{r, \delta}$, and then we get 

\begin{equation}
\label{tangent vector to gamma} 
||\Delta x'- \Delta x || < |\Delta b'- \Delta b|.
\end{equation}
As $(\Delta h, \Delta u_1)$ is a tangent vector to the curve $\underline{\mathcal{R}}_{\underline{f}}(\gamma)$, by the same reason as before, we get

\begin{equation}
\label{tangent vector to R gamma}
||\Delta u_1 || < |\Delta h|.
\end{equation}
By (\ref{vector in V_R}), (\ref{epsilon greater than u_2-u_1}) and (\ref{tangent vector to R gamma}) we have

\begin{equation*}
 \begin{array}{lcl}
   |\Delta h | & \leq & |\Delta \epsilon| + |\Delta h_2| \leq |\Delta \epsilon| + 2 \chi \tilde{b} || \Delta u_2|| \\
   & \leq  & |\Delta \epsilon| + 2 \chi \tilde{b} || \Delta u_2-\Delta u_1|| + 2 \chi \tilde{b} ||\Delta u_1||\\
   & \leq &  |\Delta \epsilon| + 2 \chi \tilde{b} |\Delta \epsilon|+  2 \chi \tilde{b} |\Delta h|. \\
 \end{array}
\end{equation*}
Hence, when we are in a deep level of renormalization we have

\begin{equation}
\label{h smaller than epsilon}
|\Delta h| \leq 2 |\Delta \epsilon|.
\end{equation}
Since 

\[
|\Delta q'| \leq |\Delta q| + ||q'-q|| = |\Delta q| + ||\Delta x' - \Delta x|| + |\Delta b'- \Delta b|,
\]
from (\ref{epsilon much smaller than q'}) and (\ref{tangent vector to gamma}) we obtain

\[
|\Delta \epsilon| = o \big( |\Delta q| + 2 |\Delta b'- \Delta b| \big) = o \big( |\Delta q| \big(1+2A \big) \big).  
\]
Hence

\begin{equation}
\label{epsilon much smaller than q}
|\Delta \epsilon| = o \big( |\Delta q| \big). 
\end{equation}
From this and using Lemma~\ref{Technical Lemma} we have

\begin{equation*}
\begin{array}{ccl}
   \displaystyle   \frac{|\Delta b - \Delta b'|}{|\Delta v|} & \leq &   \displaystyle  \frac{C_1 \cdot |I'| \cdot \big( |\Delta h_1|+|\Delta h|\big) }{|\Delta v|}  = C_1 \cdot |I'| \cdot \frac{|\Delta h_1| }{|\Delta v|}  + C_1 \cdot |I'| \cdot \frac{|\Delta h| }{|\Delta v|} \\
   & & \\
   & = & \displaystyle C_1 \cdot |I'| \cdot \frac{|\Delta \tilde{v}|}{|\Delta v|} \cdot \frac{|\Delta h_1| }{|\Delta \tilde{v}|}  + C_1 \cdot |I'| \cdot \frac{|\Delta h| }{|\Delta v|} \\
   & & \\ 
   & \leq & C_2 \cdot |I'| \cdot \displaystyle \frac{|\Delta h_1|}{|\Delta \tilde{v}|} + o(1), 
\end{array}
\end{equation*}
for a constant $C_2>0$. Hence we obtain

\[
\displaystyle \limsup_{\gamma \rightarrow p} \frac{|\Delta b - \Delta b'|}{|\Delta v|} \leq O(|I'|) A.
\]
Since $|I'|$ goes to zero when the level of renormalization goes to infinity we conclude that $A=0$, as desired.
\qed

Thus we have proved that there is an absorbing set, $B_0,$ for the renormalization operator within which the topological conjugacy class of $\underline f$ is a $\mathcal C^1$ manifold. It remains to prove that it is globally $\mathcal C^1$.

By \cite[Lemma 5.1]{AlvesColli2},
each infinitely renormalizable gap mapping $f_0=(f_{R},f_L,b_0)$ can be included in a family $f_t,$ for $t\in (-\varepsilon_0,\varepsilon_0)$
of gap mappings, which is transverse to the topological conjugacy class of $f_0.$ The construction of this family is given by varying the $b$ parameter in a small neighbourhood about $b_0$, and observing that the boundary points of the principal gaps at each renormalization level are strictly increasing functions in $b$. Thus we have that the transversality of this family is preserved under renormalization.
Let $\Delta f$ denote a vector tangent to the family 
$f_t$ at $f.$ We have the following:

\begin{lemma}
\label{lem:trans}
Let $n_0\in\mathbb N$ be so that
$\underline{\mathcal{R}}^{n_0}( \underline f)\in B_0.$  Then
$D\underline{\mathcal{R}}^{n_0}(\Delta f)\notin T_{\underline{\mathcal{R}}^{n_0} f} \mathrm{graph}(w^*),$
where $w^*=\mathcal T_{R^{n_0}(\underline f)}\cap B_0.$
\end{lemma}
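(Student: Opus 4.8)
The plan is to show that the tangent vector $\Delta f$ to the transverse family, when pushed forward under $D\underline{\mathcal R}^{n_0}$, has a nonzero $b$-component relative to the splitting $T_{\underline{\mathcal R}^{n_0}f}\mathrm{graph}(w^*)=V^*_{\underline{\mathcal R}^{n_0}f}$, while the admissible plane field $V^*$ consists precisely of vectors whose $b$-component is small compared to the remaining coordinates (property (1) of Definition~\ref{Def plane admissible}). So I would compare the two in the coordinates $(\Delta b,\Delta x)$ introduced in Lemma~\ref{Technical Lemma}.

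First I would recall, from \cite[Lemma 5.1]{AlvesColli2} and the discussion preceding the statement, that the family $f_t$ is obtained by varying the parameter $b$ about $b_0$, and that the boundary points of the principal gaps at every renormalization level are strictly increasing functions of $b$. This is exactly the statement that transversality is preserved under renormalization: if we write $\Delta f$ in the $(\alpha,\beta,b,\eta_L,\eta_R)$ coordinates, the initial vector is (up to positive scaling) the pure $b$-direction $(0,0,1,0,0)$, and more importantly the key quantitative input is that the $b$-coordinate of $D\underline{\mathcal R}^{n_0}(\Delta f)$ does not collapse. Concretely, combining the strong expansion in the $b$-direction established in Lemma~\ref{lemma partial derivatives of abb tilde respec to abb} (where $|\partial\tilde b/\partial b|\asymp 1/|I'|$) with the cone invariance of Lemma~\ref{lem:cone1} and Lemma~\ref{lem:cone2}, I would show by induction on the renormalization level that $D\underline{\mathcal R}^{j}(\Delta f)$ remains in the cone field $C_{r,\delta}$ for all $j\le n_0$; hence writing $D\underline{\mathcal R}^{n_0}(\Delta f)=(\Delta b^{(n_0)},\Delta x^{(n_0)})$ we get $\|\Delta x^{(n_0)}\|< r\,|\Delta b^{(n_0)}|$ with $r<1$, so in particular $|\Delta b^{(n_0)}|$ is comparable to $\|D\underline{\mathcal R}^{n_0}(\Delta f)\|$ and is nonzero.

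On the other hand, by Proposition~\ref{prop:deep manifold} the tangent plane $T_{\underline{\mathcal R}^{n_0}f}\mathrm{graph}(w^*)$ equals the admissible plane $V^*_{\underline{\mathcal R}^{n_0}f}$, and every vector $(\Delta\alpha,\Delta\beta,\Delta b,\Delta\eta_L,\Delta\eta_R)\in V^*$ satisfies $|\Delta b|\le \chi b\,\|(\Delta\alpha,\Delta\beta,\Delta\eta_L,\Delta\eta_R)\|$ by property~(1) of Definition~\ref{Def plane admissible}. Since at deep renormalization levels $b$ is arbitrarily close to $0$ (as noted in the proof of the hyperbolicity proposition: otherwise $0$ would lie in the gap, which is nearly all of $(b-1,b)$), for $n_0$ large enough the factor $\chi b$ is much smaller than $1$, so vectors in $V^*$ have $b$-component dominated by the other components. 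But $D\underline{\mathcal R}^{n_0}(\Delta f)$ has $b$-component dominating its other components (it lies in $C_{r,\delta}$ with $r<1$). These two conditions are incompatible for a nonzero vector, so $D\underline{\mathcal R}^{n_0}(\Delta f)\notin V^*_{\underline{\mathcal R}^{n_0}f}=T_{\underline{\mathcal R}^{n_0}f}\mathrm{graph}(w^*)$, which is the claim.

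The main obstacle I anticipate is the inductive step showing that $D\underline{\mathcal R}^{j}(\Delta f)$ stays inside the cone field $C_{r,\delta}$ all the way from level $0$ to level $n_0$ — the cone invariance in Lemma~\ref{lem:cone2} is only asserted for $n$ sufficiently large, whereas here we need to track the orbit from the very beginning. To handle this I would instead argue directly with the structure of the family: since $f_t$ is the $b$-family and the principal gap endpoints at every level depend monotonically (and with controlled, bounded-away-from-zero derivative) on $b$ — which is the content of \cite[Lemma 5.1]{AlvesColli2} — the vector $\Delta f$ expressed at level $j$ is automatically a positive multiple of the $b$-direction plus lower-order terms, hence lands in $C_{r,\delta}$ for the relevant range of levels without needing the full cone-invariance machinery; only the final comparison at level $n_0$ (where $\underline{\mathcal R}^{n_0}f\in B_0$) uses the estimates above. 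Making this monotonicity argument precise, and checking the endpoint-derivative bounds survive composition, is the technical heart of the proof.
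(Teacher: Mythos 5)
The paper does not actually write out a proof of Lemma~\ref{lem:trans}: it is stated immediately after the paragraph invoking \cite[Lemma 5.1]{AlvesColli2} (monotonicity of the principal-gap endpoints in $b$ at every renormalization level) and the assertion ``thus we have that the transversality of this family is preserved under renormalization,'' so there is no detailed argument in the source to compare against. Your overall strategy is the right one and is the one the paper has in mind: exhibit $D\underline{\mathcal R}^{n_0}(\Delta f)$ as a vector whose $b$-component dominates the $x$-components, and observe that by admissibility (Definition~\ref{Def plane admissible}, property (1), with $\chi b$ small at a deep level) and Proposition~\ref{prop:deep manifold}, $T_{\underline{\mathcal{R}}^{n_0}f}\mathrm{graph}(w^*)=V^*$ consists only of vectors whose $b$-component is \emph{small} compared to the others; so the two sets meet only at $0$.

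There is a genuine gap, however, in the step you flag yourself. You write that because the principal-gap endpoints depend monotonically on $b$, ``the vector $\Delta f$ expressed at level $j$ is automatically a positive multiple of the $b$-direction plus lower-order terms, hence lands in $C_{r,\delta}$.'' Monotonicity from \cite[Lemma 5.1]{AlvesColli2} gives the \emph{sign} and non-vanishing of the $b$-component of $D\underline{\mathcal R}^j(\Delta f)$, not its \emph{dominance} over the $(\alpha,\beta,\eta_L,\eta_R)$-components, which is what membership in $C_{r,\delta}$ requires; at shallow levels the cone estimates of Lemmas~\ref{lem:cone1}--\ref{lem:cone2} are unavailable and the $x$-components of $D\underline{\mathcal R}^j(\Delta f)$ need not be ``lower order.'' A cleaner route, consistent with the tools the paper sets up, is to use the $Q$-invariance of the plane field $V^*$: since $V^*_p=D\underline{\mathcal R}_p^{-1}(V^*_{\underline{\mathcal R}p})$ on the absorbing set, the statement $D\underline{\mathcal R}^{n_0}(\Delta f)\in V^*_{\underline{\mathcal R}^{n_0}f}$ pulls back to the same statement at the first level $n_0^*$ at which $\underline{\mathcal R}^{n_0^*}\underline f\in B_0$; one then only has to check transversality once, at level $n_0^*$, where $\partial\tilde b/\partial b>0$ (by \cite[Lemma 5.1]{AlvesColli2}) and $\chi b$ can be made arbitrarily small by taking $n_0^*$ large. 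Alternatively one can run the dichotomy of Lemma~\ref{Technical Lemma}: either $D\underline{\mathcal R}^{j+1}(\Delta f)\in C_{r,\delta}$ at some $j\geq n_0^*$ (and then cone invariance finishes), or by part (i) the $b$-component $|\Delta b^{(j)}|\leq K|I'_j|\,\|\Delta v^{(j)}\|$ with $\|\Delta v^{(j)}\|$ growing at most geometrically by part (ii) while $|I'_j|\to 0$ super-exponentially, forcing the $b$-component to collapse — which is what the monotonicity is meant to preclude. Either way, the missing ingredient is a quantitative, not merely sign, control on $\partial\tilde b^{(j)}/\partial b$, and that is the part of the argument that still needs to be extracted carefully from \cite[Lemma 5.1]{AlvesColli2} rather than asserted.
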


Using this lemma, we can argue as in the proof of \cite[Theorem 9.1]{dFdMP} to conclude the proof Theorem~\ref{thm:A}:

\begin{theorem}
$\mathcal T_f\subset\mathcal D^4$ is a $\mathcal C^1$ manifold.
\end{theorem}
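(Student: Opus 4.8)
The plan is to deduce the global statement from Proposition~\ref{prop:deep manifold} by an un-renormalization argument, in the spirit of \cite[Theorem 9.1]{dFdMP}. Fix an infinitely renormalizable $f\in\mathcal D^4$, regarded as $\underline f\in\underline{\mathcal D}$. Since $B_0$ is absorbing for $\underline{\mathcal R}$, choose $n_0$ with $\underline g:=\underline{\mathcal R}^{n_0}\underline f\in B_0$, and set $W:=\underline{\mathcal T}_{\underline g}\cap B_0=\mathrm{graph}(w^*)$, which by Proposition~\ref{prop:deep manifold} is a $\mathcal C^1$ codimension-one submanifold carrying the continuous tangent plane field $V^*$. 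Because being $n_0$-times renormalizable with a prescribed combinatorics $\Gamma$ is an open condition, there is a neighbourhood $U$ of $\underline f$ on which $\underline{\mathcal R}^{n_0}$ is defined and realizes the combinatorics of $\underline f$; by the combinatorial characterization of topological conjugacy for infinitely renormalizable gap maps (\cite{AlvesColli2}, cf.\ the Proposition in §2.2), a map in $U$ is topologically conjugate to $f$ precisely when its $n_0$-th renormalization lies in $\underline{\mathcal T}_{\underline g}$. Hence, after shrinking $U$,
$$\underline{\mathcal T}_{\underline f}\cap U=(\underline{\mathcal R}^{n_0}|_U)^{-1}(W).$$

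Next I would exploit the limited differentiability of renormalization. Although $\underline{\mathcal R}$ is only jump-out differentiable, the iterate $\underline{\mathcal R}^{n_0}$ is $\mathcal C^1$ as a map from $\underline{\mathcal D}^4$ into the decomposition space at the $\mathcal C^3/\mathcal C^1$-nonlinearity level, with $D\underline{\mathcal R}^{n_0}_{\underline h}$ depending continuously on $\underline h\in U$ (this is the reason $\mathcal C^4$, rather than $\mathcal C^3$, is assumed, since the one-derivative loss in passing to the derivative operator must be absorbed, and $W$ already lives one level down; see \cite{MP} and \cite{dFdMP}). Locally write $W=\Psi^{-1}(0)$ for a $\mathcal C^1$ submersion $\Psi$ with values in $\mathbb R$ (for instance $\Psi(\underline h)=b(\underline h)-w^*(x(\underline h))$ in the coordinates of Lemma~\ref{Technical Lemma}), so that $\underline{\mathcal T}_{\underline f}\cap U=(\Psi\circ\underline{\mathcal R}^{n_0})^{-1}(0)$. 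By Lemma~\ref{lem:trans}, $D\underline{\mathcal R}^{n_0}_{\underline f}(\Delta f)\notin T_{\underline g}W=\ker D\Psi_{\underline g}$, where $\Delta f$ is tangent to the transverse family $f_t$ of \cite[Lemma 5.1]{AlvesColli2}; hence $D(\Psi\circ\underline{\mathcal R}^{n_0})_{\underline f}\neq 0$. Since transversality of the family $f_t$ is preserved under renormalization and $D\underline{\mathcal R}^{n_0}$ is continuous, this non-degeneracy persists along $\underline{\mathcal T}_{\underline f}\cap U$ after a further shrinking of $U$, so $\Psi\circ\underline{\mathcal R}^{n_0}$ is a $\mathcal C^1$ submersion there. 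The implicit function theorem in Banach spaces then yields that $\underline{\mathcal T}_{\underline f}\cap U$ is a $\mathcal C^1$ codimension-one submanifold, with $T_{\underline h}\underline{\mathcal T}_{\underline f}=(D\underline{\mathcal R}^{n_0}_{\underline h})^{-1}\big(T_{\underline{\mathcal R}^{n_0}\underline h}W\big)$. Transferring this back through the coordinate identifications $\Theta$ and $\Xi$, and recalling that Proposition~\ref{prop:deep manifold} already records that the deep part is codimension one, one concludes that $\mathcal T_f\subset\mathcal D^4$ is a $\mathcal C^1$ manifold of codimension one, which is precisely Theorem~\ref{thm:A}.

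The main obstacle is the rigorous bookkeeping of the jump-out differentiability: one must verify that composing the $n_0$ derivative operators of $\underline{\mathcal R}$ produces a bounded operator with only a single net loss of regularity, so that $\Psi\circ\underline{\mathcal R}^{n_0}\colon\underline{\mathcal D}^4\to\mathbb R$ is honestly $\mathcal C^1$ and the implicit function theorem genuinely applies — this is the delicate point treated in \cite[Theorem 9.1]{dFdMP} and \cite{MP}, and the place where continuity of $f\mapsto D\underline{\mathcal R}_f$ is essential. A secondary but necessary point is to confirm that the local topological conjugacy class is exactly the $\underline{\mathcal R}^{n_0}$-preimage of $W$, i.e.\ that no nearby map with a different itinerary of signs $\sigma_i$ or return times $k_i$ enters the conjugacy class; this follows from the combinatorial rigidity of \cite{AlvesColli2} together with the openness of finite renormalizability. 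Alternatively, one may bypass the implicit function theorem and argue exactly as in \cite{dFdMP}: pull back the continuous plane field $V^*$ on $W$ by $D\underline{\mathcal R}^{n_0}$ to a continuous field of codimension-one tangent planes along $\underline{\mathcal T}_{\underline f}$ (well defined thanks to the transversality of Lemma~\ref{lem:trans}), use Lemma~\ref{lem:cone2} and Lemma~\ref{Technical Lemma} to see that $\underline{\mathcal T}_{\underline f}$ is locally a Lipschitz graph with direction outside $C_{r,\delta}$, and invoke the criterion that a Lipschitz submanifold admitting a continuous tangent-plane field is $\mathcal C^1$.
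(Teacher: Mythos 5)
Your argument is correct and follows essentially the same route as the paper: the paper's own proof consists of establishing Lemma~\ref{lem:trans} and then invoking the argument of \cite[Theorem 9.1]{dFdMP}, which is exactly the scheme you spell out — identify $\underline{\mathcal T}_{\underline f}$ locally with $(\underline{\mathcal R}^{n_0})^{-1}(\mathrm{graph}(w^*))$, use the transverse family of \cite[Lemma 5.1]{AlvesColli2} to get a submersion, and apply the implicit function theorem. You also correctly locate the source of the drop from $\mathcal C^4$ to a $\mathcal C^1$ manifold, which matches the paper's closing remark.
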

Note that the application of the Implicit Function Theorem in the proof is why we lose one degree of differentiability.
\begin{comment}
\begin{proof}

The theorem follows easily form this lemma and Lemma~\ref{lem:trans}

Let $f=\Xi(\underline f)$, where $\Xi$ is given by composing the maps in the decomposition. We have that $\Xi$ is a smooth embedding.
Moreover, there exists $n_0\in\mathbb N$ so that
$\underline{\mathcal{R}}^{n_0}(\underline f)\in B_0.$
Let $v$ be a vertical line segment at $f$.
By the previous lemma, we have that 
$D\underline{\mathcal{R}}^{n_0}_{\underline f}\circ D i_f(v_f)$ is transversal to 
$T_{\underline{\mathcal{R}}^{n_0}\underline f}\mathrm{graph}(w^*),$ where
$w^*=\underline{\mathcal{ T}}_{\underline{\mathcal{R}}^{n_0}(\underline f)}\cap B_0.$
Thus we have that $\mathcal T_f$ is a $\mathcal C^1$-manifold.
\end{proof}
\end{comment}

%\appendix
%\section{The Nonlinearity Operator}
%\input{sections/appendix}

\section*{Acknowledgements}
The authors would like to Liviana Palmisano for some helpful comments about \cite{MP}. They also thank Sebastian van Strien for his encouragement and several
helpful conversations. 

\bibliographystyle{plain}
  \bibliography{biblio}

\end{document}